\let\OLDthebibliography\thebibliography
\renewcommand\thebibliography[1]{
  \OLDthebibliography{#1}
  \setlength{\parskip}{0pt}
  \setlength{\itemsep}{0pt plus 0.3ex}
}
\titleformat{\section} {\normalfont\scshape \large \centering}{ \thesection}{1em}{}
\definecolor{morado}{rgb}{0.5,0,0.5}
\newcommand{\comu}{\mathbb C}
\newcommand{\cupdot}{\mathbin{\mathaccent\cdot\cup}}
\newcommand{\rank}{ {\color{black}{{\rm rk \, }}}}
\newcommand{\rankq}{ {\color{black}{{\rm rk}_q \, }}}
\newcommand{\rankWZeroq}{ {\color{black}{{\rm rk}_{W_0, q} \, }}}
\newcommand{\rankzq}{ {\color{black}{{\rm rk}_{z, q} \, }}}
\newcommand{\rankzeroq}{ {\color{black}{{\rm rk}_{z_0, q} \, }}}
\newcommand{\Soergelcat}{ {\mathcal D}_{(W,S)}}
\newcommand{\tildeSoergelcat}{ {{\mathcal D}}_{(W,S)}}
\newcommand{\tildeSoergelcatC}{ {\mathcal D}^{\comu}_{(W,S)}}
\newcommand{\F}{ { \mathcal F}}
\newcommand{\NB}{ {{\mathbb{NB}}_n}}
\newcommand{\NBgr}{ {{\mathbb{NB}_n^{gr}}}}
\newcommand{\NBnngr}{ {{\mathbb{NB}_{n-1}^{gr}}}}
\newcommand{\BRn}{ {\mathbb{B}^{x,y}_n}}
\newcommand{\BRgr}{ {\mathbb{B}^{gr, x,y}_n}}
\newcommand{\BRnn}{ {\mathbb{B}^{x,y}_{n-1}}}
\newcommand{\BRnngr}{ {\mathbb{B}^{gr, x,y}_{n-1}}}
\newcommand{\BRcinco}{ {\mathbb{B}^{x,y}_5}}
\newcommand{\JWk}{ {\mathbf{JW}_k}}
\newcommand{\JWn}{ {\mathbf{JW}_n}}
\newcommand{\JWnn}{ {\mathbf{JW}_{n-1}}}
\newcommand{\TL}{ {\mathbb{TL}}}
\newcommand{\Blob}{ {\mathbb{B}}}
\newcommand{\TRnn}{ {\mathbb{TL}_{n-1}}}
\newcommand{\TRk}{ {\mathbb{TL}_{k}}}
\newcommand{\TRn}{ {\mathbb{TL}_{n}}}
\newcommand{\TRcinco}{ {\mathbb{TL}_{5}}}
\newcommand{\Z}{\mathbb{Z}}
\newcommand{\B}{\mathbb{B}}
\newcommand{\CC}{ \mathbb C }
\newcommand{\spa}{{\rm span}}
\newcommand{\s}{\mathfrak{s}}
\newcommand{\T}{  \mathfrak{t}}
\newcommand{\tab}{{\rm Tab}}
\newcommand{\sign}{-}
\newcommand{\UU}{\mathbb{U}}
\newcommand{\Exp}{ {\rm \bf exp} }
\newcommand{\botts}[2]{ \mbox{Hom}_{{\mathcal D}} ( \underline{#1}, \underline{#2} ) }
\newcommand{\ese}{  {\color{red} s}}
\newcommand{\te}{ {\color{blue} t}}
\newtheorem{theorem}{Theorem}[section]
\newtheorem{lemma}[theorem]{Lemma}
\newtheorem{definition}[theorem]{Definition}
\newtheorem{corollary}[theorem]{Corollary}
\newtheorem{remark}[theorem]{Remark}
\newenvironment{dem}{\noindent \textit{Proof:} }{\quad \hfill $\square$}
\numberwithin{equation}{section}
\begin{document}
\Yvcentermath1
\sidecaptionvpos{figure}{lc}

\title{Graded sum formula for $\tilde{A}_1$-Soergel calculus and the nil-blob algebra. }

\author{
  \, Marcelo Hern\'andez Caro{\thanks{Supported in part by beca ANID-PFCHA/Doctorado Nacional/2019-21190827
  }} \, and  Steen Ryom-Hansen{\thanks{Supported in part by FONDECYT grant 1221112  }}}

\date{\vspace{-5ex}}
\maketitle
\begin{abstract}
  We study the representation theory of the Soergel calculus algebra
  $ {\color{black}{ \tilde{A}_w^{\comu}}} := \mbox{End}_{\Soergelcat} (\underline{w})
  $ over $ \comu$ in type $ \tilde{A}_1$. 
  We generalize the recent isomorphism between the nil-blob algebra $ \NB$ and
  {\color{black}{a diagrammatically definied subalgebra $ {A}_w^{\comu}$} of } 
  ${\color{black}{ \tilde{A}_w^{\comu}}}$ to deal with the two-parameter blob algebra.
  Under this generalization, the two parameters correspond
  to the two simple roots for $  \tilde{A}_1$. Using this,
  together with calculations 
  involving the Jones-Wenzl idempotents for the Temperley-Lieb subalgebra of $ \NB$, we obtain a
  concrete diagonalization of the matrix of the bilinear form on the cell module
$ \Delta_w(v) $ 
  for $ {\color{black}{ \tilde{A}_w^{\comu}}} $. The 
  entries of the diagonalized matrices turn out to be products of roots for
  $  \tilde{A}_1$. We use this to study Jantzen type filtrations of $ \Delta_w(v) $ 
  for $ {\color{black}{ \tilde{A}_w^{\comu}}}$. 
  We show that, at {\color{black}{an}} enriched Grothendieck group level, the corresponding
  sum formula has terms $ \Delta_w(s_{\alpha }v)[ l(s_{\alpha }v)- l(v)] $, where
  $ [ \cdot ] $ denotes grading shift. 
  \end{abstract}

\section{Introduction}
Cellular algebras were introduced by Graham and Lehrer in 1994 {\color{black}in the paper \cite{GL}}
as a framework for
studying the non-semisimple representation theory of many finite dimensional 
algebras, 

The motivating examples for cellular algebras were the Iwahori-Hecke algebras of type $ A_n $
and Temperley-Lieb algebras, but it has since been realized that many other finite
dimensional 
algebras fit into this
framework. 
For a cellular algebra one has 
a family of
{\color{black}{cell modules}}
$ \{ \Delta(\lambda)  \}$, endowed with bilinear forms
$ \langle \cdot, \cdot \rangle_ \lambda$, that together control the representation theory of
the algebra in question.
Unfortunately, the concrete analysis of these bilinear forms is in general 
difficult, but in this paper we give a non-trivial  
cellular algebra over $ \comu $ for which the bilinear forms 
$ \langle \cdot, \cdot \rangle_ \lambda$ can in fact be diagonalized over an integral form of the algebra, 
thus solving all relevant questions
concerning them, and therefore, by cellular algebra theory, concerning the representation theory
of the algebra itself.

\medskip
Our cellular algebra has two origins. Firstly it arises in the diagrammatic Soergel 
calculus of the Coxeter system $ (W,S) $ of type $ \tilde{A}_1 $
as the endomorphism algebra
$ \tilde{A}_w^{\comu} := \mbox{End} ({w}) $ of $ w :=
{\color{red} s} {\color{blue} t} {\color{red} s} {\color{blue} t} \cdots $ of length $n $,
where $ S= \{ {\color{red} s}, {\color{blue} t} \} $. 
An approach to Soergel calculus of universal Coxeter groups, in particular of type $ \tilde{A}_1 $,
has been developed 
recently by Elias and Libedinsky in
\cite{EL}, see also \cite{BLS}. For type $ \tilde{A}_1 $ this approach involves the
two-colour Temperley-Lieb algebra but unfortunately the
two-colour Temperley-Lieb algebra only captures the degree zero
part of $ \tilde{A}_w^{\comu} $,
whereas our interests lie in the full grading on $ \tilde{A}_w^{\comu} $. 

\medskip
The  second origin of our cellular algebra 
is as a certain idempotent truncation $\mathbb{NB}_{n-1} $
of Martin and Saleur's blob-algebra
{\color{black}{from \cite{Mat-Sal}}}. 
In \cite{LPR}
the algebras $ \tilde{A}_w^{\comu}  $ and $\mathbb{NB}_{n-1} $ 
were studied extensively 
and in particular presentations in terms of generators and relations were found for
each of them. Using this
it was shown that there is an isomorphism $ {A}^{\comu}_w  \cong  \mathbb{NB}_{n-1} $
where $ A_w^{\comu} $ is a natural diagrammatically defined subalgebra of $ \tilde{A}^{\comu}_w $,
whose dimension is half the dimension of $ \tilde{A}^{\comu}_w $. On the other hand, we show
in this paper that the representation theory of $ \tilde{A}^{\comu}_w $ can be completely recovered from
the representation theory of $ A_w^{\comu} $.

%% \sout{
%% {\color{black}{
%%     Note that the category associated with $\mathbb{NB}_{n}$ already appeared in \cite{Russel}
%%     where it was shown to be equivalent to}}}
%% \newline
%% \sout{
%% {\color{black}{the
%% Bar-Natan category of the thickened annulus.
%%     This equivalence is used in the recent work \cite{HRW} to construct a}}}
%% \newline
%% \sout{
%% {\color{black}{Kirby colour for Khovanov homology}}}

\medskip
Similarly to the original blob-algebra, the diagrammatics for $ \mathbb{NB}_{n-1} $
is given by blobbed (marked)
Temperley-Lieb diagrams, as in the following
examples
\begin{equation}
 \raisebox{-.43\height}{\includegraphics[scale=0.8]{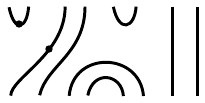}},
  \, \,   \, \,   \, \,   \, \, 
  \raisebox{-.43\height}{\includegraphics[scale=0.8]{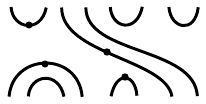}}
\end{equation}
although the rule for multiplying diagrams is different. Following \cite{LPR}, we call 
$ \mathbb{NB}_{n-1} $ the
nil-blob algebra, but in fact
$ \mathbb{NB}_{n-1} $ has also appeared 
in the literature under the name the \emph{dotted Temperley-Lieb algebra}, see \cite{Russel}. 
In \cite{Russel}
it was shown that the
associated \emph{dotted Temperley-Lieb category} is equivalent to the 
Bar-Natan category of the thickened annulus and 
this equivalence was used in the recent work \cite{HRW} to construct a
Kirby color for Khovanov homology.

\medskip
An important feature of $ \tilde{A}_w^{\comu} $ and $ \mathbb{NB}_{n-1} $,
and in fact of all cellular algebras appearing in this paper, is the fact that
they are $ \Z $-graded algebras, with explicitly given degree functions defined in terms of the diagrams.
They are $\Z$-graded cellular in the sense of Hu and Mathas,
{\color{black}{see \cite{hu-mathas}}}.

\medskip

In this paper our first new result is a construction of integral forms
$ {A}_w  $ and $ \BRnn $
for $ {A}^{\comu}_w  $ and $ \mathbb{NB}_{n-1} $
over the two-parameter polynomial algebra $ R:= {\mathbb C}[x,y] $ 
and a 
lift of the isomorphism
$ {A}^{\comu}_w  \cong  \mathbb{NB}_{n-1} $ to 
$ {A}_w  \cong   \BRnn $. 
The integral form $ A_w $ 
is in fact already implicit in the setup for Soergel calculus,
using the
{\color{black}{dual}} geometric realization of the Coxeter group $ W $ of 
type $ \tilde{A}_1$. Under this realization, the parameters $ x $ and $ y $ correspond to the
two simple roots for $W$.
The integral form for $ \mathbb{NB}_{n-1} $ is also a well-known object, since it is
simply the two-parameter blob-algebra $ \BRnn$ with
blob-parameter $ x $ and 
marked loop parameter 
$ y $. 
Thus the novelty of our result lies primarily in the isomorphism between these
integral forms, which on the other hand has the quite 
surprising consequence of rendering 
a Coxeter group theoretical meaning to the two blob algebra parameters
for $ \BRnn$, since they become nothing but the simple roots for $ W $. 

\medskip
Our main interests lies in the representation theory of $ A_w $ which via the
above isomorphism is equivalent to the representation theory of $ \BRnn$. 
For several reasons the representation theory of $ \BRnn$ is more convenient to handle. 
Both algebras are cellular algebras with diagrammatically defined cellular bases, but  
the straightening rules for expanding the product of two cellular basis
elements in terms of the cellular basis are
{\color{black}{easier}}
in the $ \BRnn$ setting. Secondly, there is a natural Temperley-Lieb subalgebra 
$ \TRnn$ of $ \BRnn$ whose associated restriction functor $ {\rm Res} $ is very useful for our purposes.
We show 
{\color{black}{in section \ref{restriction} of our paper}}
that $ {\rm Res} $ maps a cell module $ \Delta^{\B}_{n-1}(\lambda) $ for $ \BRnn$ to a module with a cell module
filtration for 
$ \TRnn$ and that the sections of this filtration induce a diagonalization
of the bilinear form $ \langle \cdot, \cdot \rangle^{\B}_{n-1,\lambda} $ on $ \Delta^{\B}_{n-1}(\lambda) $.

\medskip
This leaves us with the task of calculating the values of the bilinear form on these sections. For
$ \lambda \ge 0 $ and $ n-1 = 2k + \lambda $ we 
show that this task is equivalent to calculating the coefficient of the identity
{\color{black}{
$ \raisebox{-.3\height}{\includegraphics[scale=0.9]{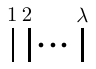}}  $
in the expansion of 
 \begin{equation}
 \raisebox{-.5\height}{\includegraphics[scale=0.9]{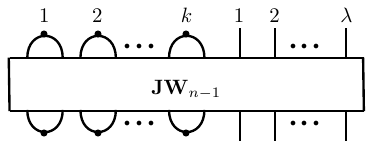}}  
 \end{equation}
 where $ \mathbf{JW}_{n-1} $ is the Jones-Wenzl idempotent for
 $ \TRnn$. Similarly, for $ \lambda < 0 $ and $ n-1 = 2k + |\lambda |$ it is equivalent to calculating the coefficient of 
 $ \raisebox{-.3\height}{\includegraphics[scale=0.9]{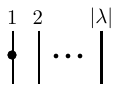}}  $ in the expansion of
  \begin{equation}
 \raisebox{-.5\height}{\includegraphics[scale=0.9]{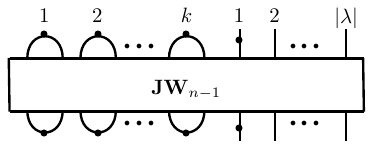}}  
\raisebox{-26\height}{\color{black}{.}}
  \end{equation}}}
The determination of these coefficients constitutes the main
calculatory ingredient of our paper and is done in section \ref{deter}. The result is 
given in Theorem \ref{cifactorA} for $ \lambda \ge 0 $ and in Theorem \ref{cifactorB}
for $ \lambda < 0 $. Although one may possibly not have expected Coxeter theory to 
appear in this calculation, the result turns out to be a 
nice product of positive roots for $ W$.

\medskip
This work is partially motivated by the paper \cite{steen} by the second
author
in which
a diagonalization of the bilinear form for the cell module for $ \tilde{A}_w $ is obtained,
in fact
{\color{black}{the results of \cite{steen} are valid}}
for a general
Coxeter system. Unfortunately, as was already mentioned in \cite{steen},
the diagonalization process in that paper
does not work over $R$ itself, but only 
over the fraction field $ Q $ of $R$,
since it relies on certain {\color{black}{Jucys-Murphy}} elements for $ A_w $ that 
are of degree 2,
and not 0.  As a consequence 
the $\Z$-graded structure on the cell module for $ \tilde{A}_w $ breaks down under the
diagonalization process in \cite{steen}. The diagonalization process of the present paper, however,
which is based on the Jones-Wenzl idempotents that are of degree 0, resolves
this problem at least for type $\tilde{A}_1$.

\medskip
In the final section \ref{Jantzen} of the paper we
use the results of the previous sections
to 
set up a
%\cancel{\color{black}{graded}}
version $  \Delta^{gr , \comu}_{w}(v)  \supseteq  \Delta^{gr , 1, \comu}_{w}(v) \supseteq 
\Delta^{gr , 2, \comu}_{w}(v) \supseteq  \ldots $ 
of the Jantzen filtration formalism
for the graded cell module $\Delta^{gr , \comu}_{w}(v) $,
{\color{black}{using the bilinear form
$ \langle \cdot, \cdot \rangle_{w}^v $ on $  \Delta^{gr , \comu}_{w}(v) $.
Since $ \langle \cdot, \cdot \rangle_{w}^v $ is a graded bilinear form,
this filtration consists of graded submodules of $ \Delta^{gr , \comu}_{w}(v) $, and in our final
Corollary \ref{mimickingA} we show that the following identity holds 
at {\it enriched Grothendieck} group level
\begin{equation}\label{bothsidesA}
  \sum_{k>0}   \langle \Delta^{gr, k, \comu}_{W_0}(v) \rangle_q   =
  \sum_{\substack{ \alpha >0 \\  v <s_{\alpha} v \le w}}
  \langle \Delta^{gr,  \comu}_{W_0}(s_{\alpha}v )
 [l(s_{\alpha}v) -l(v)]   \rangle_q
\end{equation}  
where $ \alpha >0 $ refers to the positive roots for $ W $ and $ [ \cdot ] $ to grading shift.
This is our \emph{graded} Jantzen sum formula.}}
Analogues of (ungraded) Jantzen filtrations with associated sum formulas
exist in many module categories of Lie type and 
give information on the irreducible modules
for the category in question, see for example \cite{A}, \cite{A1}, \cite{MathasJan}. 
But although graded representation theories
in Lie theory have been known since the beginning of the nineties,
see for example 
\cite{AJS}, \cite{BGS}, \cite{EhrigStroppel},
\cite{FL}, \cite{Hu}, \cite{Lobos-Ryom-Hansen} and \cite{Stroppel}, 
to our knowledge graded sum formulas
{\color{black}{in the sense of \eqref{bothsidesA}}}
have so far not been available, even though they would be very useful for
calculating decomposition numbers.
We believe
\eqref{bothsidesA} gives an interesting indication of the possible form of graded sum formulas in 
representation theory.

\medskip
The layout of the paper is as follows. In the next section we introduce the notation that
shall be used throughout the paper
and recall
the various algebras that play a role throughout: the Temperley-Lieb algebra
$\TRn$, the blob algebra $\BRn $, the nil-blob algebra $\NB$ and the Soergel algebra
$ \tilde{A}_w$. We also recall how each of them fits into the cellular algebra language. In section
\ref{isomorphism theorems} we introduce the subalgebra 
$ {A}_w $ of $  \tilde{A}_w $ and 
show the isomorphism $ \BRnn \cong  {A}_w $ that was mentioned above. We also show how
the cellular algebra structure on $ \tilde{A}_w $ induces a cellular structure on $ {A}_w $
and that there is an isomorphism $ \Delta^{\Blob}_{n-1}(\lambda) \cong \Delta_w(v)$ between
the respective cell modules for $ \BRnn $ and ${A}_w$. In section \ref{restriction} we consider
a natural  filtration
of ${\rm Res} \Delta^{\Blob}_{n}(\lambda) $ where ${\rm Res} $ 
is the restriction functor from $ {\BRnn} $-modules to $ {\TRnn} $-modules.
We show that the Jones-Wenzl idempotents $\JWk $ for $\TRk $ where $ k \le n-1 $ can be used
to construct sections for this filtration and to diagonalize the bilinear form $ \langle \cdot, \cdot
\rangle_{n-1, \lambda}^{\B} $ on $\Delta^{\Blob}_{n-1}(\lambda)$. In section \ref{deter}
we prove the key Theorems \ref{cifactorA} and \ref{cifactorB}, that were already mentioned above.
They allow us to give concrete expressions for the diagonal
elements of the matrix for $ \langle \cdot, \cdot\rangle_{n-1, \lambda}^{\B}$, which, as already
mentioned, turn out to be products of
positive roots $ \alpha $ for $W$. 
In section \ref{charof} we give a description of the reflections $ s_{\alpha} $ in $ W $ that correspond to
the positive roots of section \ref{deter}. Finally, in section \ref{Jantzen} we use the results of the previous
sections to give the graded Jantzen filtration with corresponding graded sum formula.

\medskip
    {\color{black}{The authors wish to express their gratitude to P. Wedrich for 
        useful conversations and for pointing out that the nil-blob algebra and the
        dotted Temperley-Lieb algebra are the same. 
        They also wish to thank the two anonymous referees for detailed reports
        that greatly helped improving the presentation and accuracy of the paper.}}

\section{Blob algebras and Soergel calculus for $\tilde{A}_1$}
Throughout we use as ground field the complex numbers $\comu$, although several of 
our results hold in greater generality.
We set
\begin{equation}\label{defR}
R:=\comu[x,y].
\end{equation}
We consider $ R $ to be a (non-negatively) $\Z$-graded $\comu$-algebra via
\begin{equation}
  {\rm deg}(x) =   {\rm deg}(y) = 2.
\end{equation}

In this paper we shall consider several diagram algebras. Possibly the oldest and most studied 
diagram algebra is the Temperley-Lieb algebra. It arose in statistical mechanics in the seventies. 
In the present paper we shall use the following variation of it. 

\begin{definition}\label{deftwoTL}
The Temperley-Lieb algebra $\TRn$ with loop-parameter $ -2$
is the $R$-algebra on the
  generators $\UU_1, \ldots , \UU_{n-1} $
  subject to the
  relations
	\begin{align}
\label{eq oneTL}	\UU_i^2	& =  \sign 2 \UU_i  &   &  \mbox{if }  1\leq i < n \\
\label{eq twoTL}	\UU_i\UU_j\UU_i & =\UU_i    &  & \mbox{if } |i-j|=1
%\cancel{\color{black}{  \mbox{ and } i,j >0} }
 \\
 \label{eq threeTL}	\UU_i\UU_j& = \UU_j\UU_i    &  &   \mbox{if } |i-j|>1
 {\color{black}{.}}
	\end{align}
\end{definition}

The blob algebra
was introduced by Martin and Saleur
in \cite{Mat-Sal}, 
{\color{black}{as a 
a way of considering boundary conditions in 
the statistical mechanical model of the Temperley-Lieb algebra. 
Since its introduction, the blob algebra has been the subject of much research activity in mathematics
as well as physics, see for example  \cite{GJSV}, \cite{Lobos-Ryom-Hansen}, 
 \cite{Mar-Wood}, \cite{Michailidis}, \cite{Plaza1}, \cite{Plaza}, 
\cite{PlazaRyom}}}.
In this paper, we shall use the following variation of it.

\begin{definition}\label{deftwoblob}
The two-parameter blob algebra $\BRn$,  or more precisely
the blob algebra with loop-parameter $ -2$, marked loop parameter 
$ y $ and blob-parameter $ x $, 
is the $R$-algebra on the
  generators $\UU_0,\UU_1, \ldots , \UU_{n-1} $
  subject to the relations 
	\begin{align}
\label{eq one}	\UU_i^2	& =  \sign 2 \UU_i  &    &  \mbox{if }  1\leq i < n \\
\label{eq two}	\UU_i\UU_j\UU_i & =\UU_i      & & \mbox{if } |i-j|=1 \mbox{ and } i,j >0  \\
\label{eq three}	\UU_i\UU_j& = \UU_j\UU_i     &  &  \mbox{if } |i-j|>1 \\
\label{eq four}	\UU_1\UU_0\UU_1& =  y \UU_1    & &   \\
\label{eq five}			\UU_0^2& = x \UU_0 {\color{black}{.}}   &&     
	\end{align}  
\end{definition}

The nil-blob algebra $ \NB $, that was introduced and studied extensively in \cite{LPR}, may be recovered from $ \BRn $ via
specialization, that is 
\begin{equation}
\NB \cong \BRn \otimes_R \comu
\end{equation}  
where $\comu$ is made into an $ R$-algebra via $ x\mapsto 0 $ and $ y\mapsto 0$. In other words, 
$ \BRn $ may be considered a deformation of $ \NB$, and in fact this shall be the point of view of the 
present paper.

\medskip
Another interesting specialization of $ \BRn $ is $ \widetilde{\cal T}_n  $ defined 
as $\widetilde{\mathcal T}_n  := \BRn \otimes_R \comu$
where $ \comu $ is made into an $ R $-algebra via $ x\mapsto 1 $ and $ y\mapsto -2$.
Let $ {\mathcal  I}:= \langle \UU_0 -1\rangle $ be the two-sided
ideal in $\widetilde{\cal T}_n $ generated by $ \UU_0-1 $.
Then
\begin{equation}\label{istheTL}
{\mathcal  T}_n:= \widetilde{\mathcal T}_n/ {\mathcal  I} 
\end{equation}
is the Temperley-Lieb algebra from Definition \ref{deftwoTL}, but defined over $ \comu$.

\medskip
Just as is the case for $\NB$, one easily checks that $ \BRn $ is a $ \Z$-graded algebra.
\begin{lemma}
  The rules $ {\rm deg}(\UU_i) = 0 $ for $ i > 0 $ and $ {\rm deg}(\UU_0)  = 2 $ 
  define a (non-negative) $ \Z$-grading on $\BRn$.
\end{lemma}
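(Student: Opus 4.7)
The plan is to produce the grading by pulling it back from a grading on the free $R$-algebra in the generators $\UU_0, \UU_1, \ldots, \UU_{n-1}$, where $R$ already carries the grading specified in \eqref{defR} with $\deg(x)=\deg(y)=2$. Concretely, I would let $T$ denote the free $R$-algebra on these symbols, assign $\deg(\UU_i)=0$ for $i>0$ and $\deg(\UU_0)=2$, and extend multiplicatively and additively; since $R$ lives in non-negative even degrees, the resulting grading on $T$ is non-negative. Then $\BRn = T/J$, where $J$ is the two-sided ideal generated by the elements
\[
\UU_i^2+2\UU_i,\quad \UU_i\UU_j\UU_i-\UU_i,\quad \UU_i\UU_j-\UU_j\UU_i,\quad \UU_1\UU_0\UU_1-y\UU_1,\quad \UU_0^2-x\UU_0,
\]
obtained from the relations \eqref{eq one}--\eqref{eq five}. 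To conclude that the grading on $T$ descends to $\BRn$ it suffices to show that $J$ is a homogeneous ideal, for which it is enough to check that each of the displayed generators of $J$ is homogeneous with respect to the degree function on $T$.

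The main step, then, is the direct verification of homogeneity, relation by relation. The Temperley-Lieb relations \eqref{eq one}, \eqref{eq two}, \eqref{eq three} involve only the generators $\UU_i$ with $i>0$, all of which have degree $0$; combined with the fact that the scalar $-2\in\comu\subset R$ lies in degree $0$, both sides of each of these relations sit in degree $0$. For \eqref{eq four}, the left-hand side $\UU_1\UU_0\UU_1$ has degree $0+2+0=2$, while the right-hand side $y\UU_1$ has degree $2+0=2$, so the corresponding generator of $J$ is homogeneous of degree $2$. For \eqref{eq five}, the left-hand side $\UU_0^2$ has degree $2+2=4$, while the right-hand side $x\UU_0$ has degree $2+2=4$, so this generator of $J$ is homogeneous of degree $4$.

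Having checked homogeneity of each generator, $J$ is a homogeneous two-sided ideal of $T$, so the quotient $\BRn = T/J$ inherits a $\Z$-grading with the advertised degrees on the $\UU_i$. Since $T$ is concentrated in non-negative degrees, the same is true of $\BRn$. There is essentially no obstacle here beyond the bookkeeping of degrees in the two relations \eqref{eq four} and \eqref{eq five} that involve the parameters $x$ and $y$; the assignment $\deg(\UU_0)=2$ is precisely what is needed to match the choice $\deg(x)=\deg(y)=2$ made in \eqref{defR}.
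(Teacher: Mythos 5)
Your proposal is correct and is essentially the paper's argument, just spelled out in full: the paper's proof is the one-line observation that the defining relations are homogeneous, which is exactly the degree bookkeeping you carry out (degree $2$ for \eqref{eq four}, degree $4$ for \eqref{eq five}), packaged via the standard free-algebra-modulo-homogeneous-ideal formalism.
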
  
\begin{proof}
The relations are easily seen to be homogeneous with respect to ${\rm deg} $.
\end{proof}  
As already indicated, $ \TRn $ and $ \BRn$ are diagram algebras.
This fact plays an important role in our paper, and let us briefly explain it.
The diagram basis for $ \TRn$ consists of {\it Temperley-Lieb diagrams} on $ n $ points,
which are planar pairings between $ n $ northern points and $ n $ southern points of a rectangle. 
The diagram basis for $ \BRn$ consists of blobbed (marked) Temperley-Lieb diagrams on $ n $ points,
or {\it blob diagrams} on $ n $ points, 
which are marked planar pairings between $ n $ northern points and $ n $ southern points of a rectangle, 
where 
only pairings exposed to the left side of the rectangle may be marked, and at most once.
There is thus a natural embedding of Temperley-Lieb diagrams into blob diagrams.
The multiplication  
{\color{black}{$ D_1 D_2 $ of two diagrams $ D_1 $ and $ D_2 $ 
is given by concatenation of them, with $ D_2 $ on top of $ D_1$.}} This concatenation process
may give rise to internal 
marked or unmarked loops, as well as diagrams with more than one mark.
Internal unmarked loops are removed from a diagram
by multiplying it by $  -2 $, whereas internal marked loops are removed
from a diagram by multiplying it by $y $. Finally, any diagram with $r>1$ marks on a diagram is set
equal to the same diagram multiplied by $ x^{r-1} $, but with the $(r-1)$ extra marks removed. 
{\color{black}{For example, for
\begin{equation}
  D_1 \,  =  \, \raisebox{-.43\height}{\includegraphics[scale=0.8]{blobExampleB.pdf}},
  \, \,   \, \,   \, \,   \, \, 
    D_2 \,  = \,  \raisebox{-.43\height}{\includegraphics[scale=0.8]{blobExampleA.pdf}}
\end{equation}
we have that 
\begin{equation}
  D_1 D_2 \,   = \,  \raisebox{-.43\height}{\includegraphics[scale=0.8]{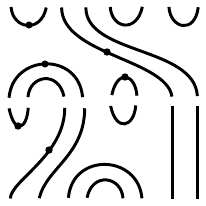}} \,  = \, 
x^2 y  \raisebox{-.43\height}{\includegraphics[scale=0.8]{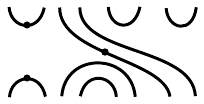}} 
\raisebox{-13.5\height}{\color{black}{.}}
\end{equation}
Later on, we shall give many more examples. 
}}

\medskip
For the proof of the isomorphisms between $ \BRn$
%\cancel{\color{black}{and}} 
and its diagrammatic version, one may 
consult
the appendix of \cite{blob positive} or else adapt the more self-contained proof given in \cite{LPR},
and similarly for $ \TRn $. 
Under the isomorphism we have that 
\begin{equation}{\label{isomorphism}} \! \!\! \!\! \! \! \!
  1 \mapsto \quad \raisebox{-.43\height}{\includegraphics[scale=0.8]{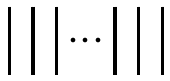}} 
\end{equation}  
and that 
\begin{equation}{\label{isomorphism1}} \! \!\! \!\! \! \! \!
  \UU_0 \mapsto \quad \raisebox{-.43\height}{\includegraphics[scale=0.8]{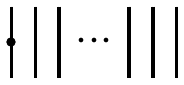}} \,,
   \, \,\,\, \,
{\color{black}{   \UU_i \mapsto \quad \raisebox{-.35\height}{\includegraphics[scale=0.8]{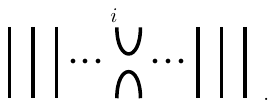}} }}
\end{equation}

\noindent
The number of Temperley-Lieb diagrams and blob diagrams on $ n $ points is
the Catalan number $  \frac{1}{n+1} \binom{2n}{n} $ and 
$ \binom{2n}{n} $. In particular $\TRn$ and 
$\BRn$ are free over $ R $ of {\color{black}{rank}}
\begin{equation}\label{rankblob}
  \rank \TRn = \frac{1}{n+1}  \binom{2n}{n}\,\, \,  \mbox{and  } \rank \BRn = \binom{2n}{n}
  {\color{black}{.}}
\end{equation}

All algebras considered in the present paper fit into the general language of {\it cellular algebras},
introduced by Graham and Lehrer. 
\begin{definition}\label{defcellularalgebra}
Let $ \cal A $ be a finite dimensional algebra 
over a commutative ring $ \Bbbk     $ with unity.
Then a cellular basis structure for $\cal A$ consists of a triple $ (\Lambda, {\tab},  C ) $ such that $ \Lambda $ is a poset, 
$\tab $ is a function on $ \Lambda $ with values in finite sets
and $ C:\coprod_{\lambda \in \Lambda} {\tab}(\lambda)  \times {\tab}(\lambda) \rightarrow \cal A$
is an injection 
such that 
$$ \{ C_{st}^{\lambda} \, |\,  s,t \in {\tab}(\lambda), \,  \lambda \in \Lambda \} $$
is a {\color{black}{$ \Bbbk$}}-basis for
{\color{black}{$\cal  A $}}:
the cellular basis for $ \cal A$. The
rule $ (C_{st}^{\lambda})^{\ast} := C_{ts}^{\lambda}$ defines 
a {\color{black}{$ \Bbbk$}}-linear antihomomorphism of $\cal A$ and 
the structure constants for $\cal A$ with respect to $ \{ C_{st}^{\lambda} \}$ 
satisfy the following condition with respect to the partial order: for all $ a \in \cal A $ we have
$$  a C_{st}^{\lambda} = \sum_{u \in \tab(\lambda) }
r_{usa} C_{ut}^{\lambda} + \mbox{ lower  terms} $$
where lower terms means a linear combination of $ C^{\mu}_{ab} $ where $ \mu < \lambda $ 
and where $ r_{usa} \in {\color{black}{ \Bbbk    }} $.
\end{definition}

To make $ \TRn $ fit into this language we choose $ \Bbbk = R$, 
$ \Lambda= \Lambda_n := \{ n,  n-2, \ldots,  1 \} $  
(or $ \Lambda := \Lambda_n = \{  n,  n-2, \ldots,  2, 0 \} $)
if $ n $ is odd (or even), 
with poset structure inherited from $ \mathbb Z$. 
For $ \lambda \in \Lambda_n $ we choose $ {\tab}(\lambda) $ to be Temperley-Lieb
{\it half-diagrams} with $ \lambda $ 
{\it propagating} lines, that is
Temperley-Lieb diagrams on $  \lambda  $ northern and $ n $ southern points
in which each northern point is paired with a southern point. For $ s, t \in {\tab}(\lambda) $
we define $ C_{st}^{\lambda} $ to be the diagram obtained from gluing $ s $ and the
horizontal reflection of $ t $, with $ s$ on the bottom. Here is an example of this gluing process with
$ n = 8 $ and 
$ s, t \in {\tab}(2) $.

\begin{equation}{\label{isomorphism2}} \! \!\! \!\! \! \! \!
  (s,t) = \,\, \left(\raisebox{-.48\height}{\includegraphics{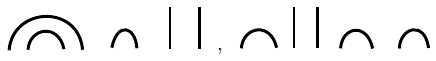}}\right) \mapsto
  \raisebox{-.48\height}{\includegraphics{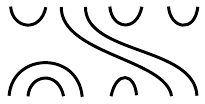}}
  \raisebox{-15\height}{.}
  \end{equation}
\begin{theorem}\label{firstcellular1}
The above triple $ (\Lambda_n, {\tab},  C ) $ makes $ \TRn$ into a cellular algebra. 
\end{theorem}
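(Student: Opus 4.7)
The plan is to verify the three ingredients of Definition \ref{defcellularalgebra} in turn, using the diagrammatic description of $\TRn$. First, I would show that the map $C$ is a bijection between $\coprod_{\lambda \in \Lambda_n} \tab(\lambda) \times \tab(\lambda)$ and the set of Temperley-Lieb diagrams on $n$ points. For a diagram $D$, the number of propagating lines $\lambda(D)$ --- the number of strands joining a northern to a southern point --- has the same parity as $n$ and so lies in $\Lambda_n$. Cutting $D$ along a horizontal middle line recovers a unique pair $(s,t)\in\tab(\lambda(D))\times\tab(\lambda(D))$ with $D = C_{s t}^{\lambda(D)}$, and conversely the gluing process shown in \eqref{isomorphism2} produces $D$ from $(s,t)$. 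Combined with the dimension formula \eqref{rankblob}, this shows that $\{C_{st}^{\lambda}\}$ is an $R$-basis of $\TRn$.

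Next, I would check the anti-involution property. Reflection across a horizontal axis sends Temperley-Lieb diagrams to Temperley-Lieb diagrams and, since concatenation is reversed, extends $R$-linearly to an anti-homomorphism $\ast$ of $\TRn$. The gluing description of $C_{st}^\lambda$ makes it clear that this reflection swaps the roles of $s$ and $t$, so $(C_{st}^\lambda)^\ast = C_{t s}^\lambda$.

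The heart of the proof is the multiplication condition. It suffices to handle $a = D$ a single Temperley-Lieb diagram and to analyze the concatenation $D \cdot C_{s t}^\lambda$ diagrammatically. The key observation is that concatenation never increases the number of propagating strands: any propagating strand of the product must follow a path that alternately uses propagating strands of the top and bottom factors. Consequently, after removing each internal closed loop with a scalar factor of $-2$, the resulting diagram has either strictly fewer than $\lambda$ propagating strands, contributing to the lower-order terms indexed by $\mu < \lambda$ in $\Lambda_n$, or exactly $\lambda$ of them. In the latter case every propagating strand of $C_{st}^\lambda$ survives the concatenation, the non-propagating strands of $s$ are left untouched, and the southern half of the product is precisely $t$; the product therefore has the form $\sum_{u} r_{u s D}\, C_{u t}^\lambda$ with coefficients $r_{u s D} \in R$ that do not depend on $t$, as required.

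The main obstacle in this plan is making the propagating-line invariance and the bottom-preservation claim rigorous; both are standard planar-pairing arguments going back to Graham and Lehrer, but they require carefully tracing each strand through the middle cut line to see that, when no propagating strand is destroyed in the concatenation, the southern half-diagram of $C_{st}^\lambda$ cannot be altered by any action of $D$.
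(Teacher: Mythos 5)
Your proposal is correct and is essentially the argument the paper has in mind: its proof of Theorem \ref{firstcellular1} is just the remark that the claim ``follows directly from the definitions,'' i.e.\ the standard Graham--Lehrer diagrammatic verification that you spell out (cutting/gluing along through-strands gives the basis, horizontal reflection gives $\ast$, and concatenation either preserves the $\lambda$ through-strands, leaving the $t$-half intact with a $(-2)^{\#\text{loops}}$ coefficient independent of $t$, or drops into lower terms). So you have simply filled in the details the paper leaves implicit, with no gap.
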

\begin{dem}
This follows directly from the definitions.
\end{dem}

\medskip
To make $ \BRn $ fit into the cellular algebra language we choose $ \Bbbk = R$, 
$ \Lambda= \{ \Lambda_{\pm n} := \{ \pm n, \pm (n-2), \ldots, \pm 1 \} $  
(or $ \Lambda := \Lambda_{\pm n} = \{ \pm n, \pm (n-2), \ldots, \pm 2, 0 \} $)
if $ n $ is odd (or even), 
with poset structure given by $ \lambda < \mu $ if $ |\lambda| < |\mu| $ or if
$ |\lambda| = |\mu| $ and $ \lambda < \mu $. For example, for $ n= 6 $ we have 
\begin{equation}
\Lambda_{\pm 6} := \{ \pm 6, \pm 4, \pm 2,0 \}, \, \, \,\,  \, \, \,\, 0<-2 < 2 <-4 < 4 <-6 < 6
\end{equation}  
For $ \lambda \in \Lambda_{\pm n} $ we choose $ {\tab}(\lambda) $ to be blob {\it half-diagrams} with $ |\lambda |$ 
{\it propagating} lines, that is
marked Temperley-Lieb diagrams on $  |\lambda | $ northern and $ n $ southern points
in which each northern point is paired with a southern point and in which 
only non-propagating pairings exposed to the left side of the rectangle may be marked. For $ s, t \in {\tab}(\lambda) $
we define $ C_{st}^{\lambda} $ to be the diagram obtained from gluing $ s $ and the
horizontal reflection of $ t $, with $ s$ on the bottom, and marking the leftmost propagating line if
$\lambda < 0$. Here is an example of this gluing process with
$ n = 8 $ and 
$ s, t \in {\tab}(-2) $.

\begin{equation}{\label{isomorphism3}} \! \!\! \!\! \! \! \!
  (s,t) = \,\, \left(\raisebox{-.48\height}{\includegraphics{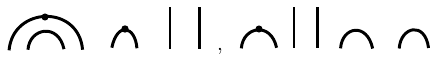}}\right) \mapsto
    \raisebox{-.48\height}{\includegraphics{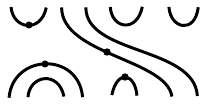}}
  \raisebox{-19\height}{.}
\end{equation}

\medskip
With this notation we have the following Theorem.
\begin{theorem}\label{firstcellular2}
The above triple $ (\Lambda_{\pm n}, {\tab},  C ) $ makes $ \BRn$ into a cellular algebra. 
\end{theorem}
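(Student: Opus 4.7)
The plan is to verify the three conditions of Definition \ref{defcellularalgebra} for the triple $(\Lambda_{\pm n}, {\tab}, C)$, modeling the argument on the cellular structure of $\TRn$ from Theorem \ref{firstcellular1}, while paying careful attention to the additional bookkeeping forced by the markings and the sign in $\Lambda_{\pm n}$.

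First, I would show that $\{C_{st}^{\lambda}\}$ is an $R$-basis of $\BRn$. By the diagrammatic description of $\BRn$, every blob diagram on $n$ points is obtained by stacking a blob half-diagram $s$ with $|\lambda|$ propagating lines under the horizontal reflection of a blob half-diagram $t$ with $|\lambda|$ propagating lines, and then either leaving the leftmost propagating line unmarked (contributing a diagram with an even total number of marks on that through-line, which we encode by $\lambda > 0$) or marking it (encoding $\lambda < 0$). Counting half-diagrams on both sides and summing over $\lambda \in \Lambda_{\pm n}$ recovers $\binom{2n}{n}$, matching the rank formula \eqref{rankblob}, and the construction is clearly injective, so this yields a basis.

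Next, I would verify that $(C_{st}^{\lambda})^{\ast} := C_{ts}^{\lambda}$ is an anti-involution. Diagrammatically, $\ast$ is the horizontal reflection of blob diagrams, which visibly reverses stacking order and preserves the marked-leftmost-propagating-line condition (hence preserves the sign of $\lambda$), so it is an $R$-linear antihomomorphism of $\BRn$.

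The main obstacle, and the heart of the argument, is the straightening rule $a\, C_{st}^{\lambda} = \sum_{u} r_{usa}\, C_{ut}^{\lambda} + \text{lower terms}$. I would establish it in three steps. (i) For a blob diagram $a$ and a blob half-diagram $s$, the product $a \cdot s$ can be resolved into a linear combination, with coefficients in $R$ (powers of $-2$, $x$ and $y$ coming from closed unmarked loops, extra marks and marked loops), of blob half-diagrams, each with at most $|\lambda|$ propagating lines; when the number of propagating lines drops strictly below $|\lambda|$ we are in the ideal spanned by $C_{ab}^{\mu}$ with $|\mu|<|\lambda|$, contributing lower terms. (ii) When the number of propagating lines is preserved, I would decompose $a \cdot s = \sum_u r_{usa}\, u$ with $u$ ranging over half-diagrams with $|\lambda|$ propagating lines; the crucial point is that these coefficients $r_{usa}$ depend only on $a$ and $s$, not on $t$, because the $t$-side is untouched by the multiplication. (iii) Finally I would treat the sign: if $\lambda > 0$ but the stacking creates an odd number of marks on the leftmost propagating line, the resulting diagram, after absorbing $x^{r-1}$ to leave exactly one mark, lies in $C_{u't}^{\lambda'}$ with $\lambda' = -\lambda < \lambda$ in the order on $\Lambda_{\pm n}$, so it is a lower term; conversely, if $\lambda < 0$ and the marks cancel to zero, one similarly passes to $\lambda' = -\lambda$, again a strictly smaller element of $\Lambda_{\pm n}$ (using that $-|\lambda| < |\lambda|$ in our order). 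Gluing with $t^{\ast}$ on top then propagates these expansions to the identity stated in Definition \ref{defcellularalgebra}.

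Once these three steps are in place, the cellular axioms are satisfied and the theorem follows. The argument is parallel to the one for $\TRn$, the additional work lying entirely in correctly accounting for the marked leftmost propagating line and for the interaction between the parameters $x$, $y$ and the comparison in $\Lambda_{\pm n}$.
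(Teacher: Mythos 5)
There is a genuine flaw in step (iii), which is exactly the point where the asymmetric order on $\Lambda_{\pm n}$ has to do its work. You claim that if $\lambda<0$ and "the marks cancel to zero" one passes to $\lambda'=-\lambda$, "again a strictly smaller element of $\Lambda_{\pm n}$ (using that $-|\lambda|<|\lambda|$ in our order)". But for $\lambda<0$ one has $\lambda'=-\lambda=|\lambda|$, and the relation $-|\lambda|<|\lambda|$ you quote says precisely that $\lambda<\lambda'$, i.e.\ such a term would be a \emph{higher} term, which would destroy cellularity rather than confirm it. Moreover, the case you are trying to handle does not exist in $\BRn$: marks never cancel. The reduction rule is that $r$ marks on a line equal $x^{r-1}$ times the line with a single mark (relation $\UU_0^2=x\UU_0$), so a mark can be created but never removed. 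The same misconception shows up earlier in your basis argument, where you speak of an "even total number of marks" on the through-line encoding $\lambda>0$; after reduction a line carries $0$ or $1$ marks, there is no parity count.

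The correct statement of the key point is the following. If $a\,C^{\lambda}_{st}$ has $|\lambda|$ propagating lines, then by planarity its leftmost propagating line passes through the leftmost propagating line of $C^{\lambda}_{st}$; hence if $\lambda<0$ (marked) the product's leftmost propagating line is still marked, so the product stays in type $\lambda$ (or drops propagating lines, giving lower terms), while if $\lambda>0$ the multiplication may add a mark, producing a diagram of type $-\lambda$, which is lower precisely because the order was \emph{defined} with $-m<m$. This one-way behaviour (marks can only be created, never destroyed) is the reason the order on $\Lambda_{\pm n}$ is chosen as it is, and it is the ingredient your step (iii) needs; with that substitution the rest of your outline (basis via the bijection between triples $(s,t,\pm)$ and blob diagrams, $\ast$ as horizontal reflection, independence of the coefficients from $t$) is sound. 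For comparison, the paper offers no details at all here, dismissing the verification as following directly from the definitions, so supplying an argument is welcome --- but the direction of the inequality at the marked/unmarked transition must be fixed.
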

\begin{dem}
This also follows directly from the definitions.
\end{dem}  

\medskip
For $ \lambda \in \Lambda_{n} $ or for $ \lambda \in \Lambda_{\pm n} $
we define $ t_{\lambda} \in {\tab}(\lambda) $ to be the following half-diagram
\begin{equation}{\label{tlambda1}} \! \!\! \!\! \! \! \!
  t_{\lambda} = \,\, \raisebox{-.5\height}{\includegraphics{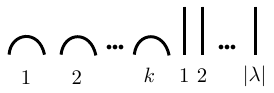}}
\end{equation}
and set $ C_\lambda = C^{\lambda}_{t_{\lambda}t_{\lambda}} $,  that is 
\begin{equation}{\label{tlambda2}} 
  C_{\lambda} = \left\{
  \begin{array}{l} \UU_1 \UU_3 \cdots \UU_{2k-1} =
    \raisebox{-.5\height}{\includegraphics{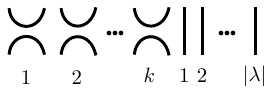}}   \mbox{ if }  \lambda \ge 0 \\        \\
  (\UU_1 \UU_3 \cdots \UU_{2k-1}) \UU_0 (\UU_2 \UU_4 \cdots \UU_{2k})   \UU_1 \UU_3 \cdots \UU_{2k-1} =
    \raisebox{-.5\height}{\includegraphics{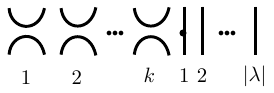}}   \mbox{ if }  \lambda < 0 
  \end{array}
  \right.
\end{equation}
where $ k = (n-|\lambda|)/2$.
Thus $ C_\lambda $ is an element of $ \TRn$ or $ \BRn$, depending on the context.
With this we can 
define the two-sided {\it cell} ideals $ A^{\lambda} $ and
$ A^{< \lambda} $ of $ \TRn$ (resp. $ \BRn$) via
\begin{equation}
  A^{\lambda} := \{ a C_{\lambda} b \, | \, a,b \in \TRn \, (\mbox{resp. } \BRn) \}  \mbox{ and }  A^{< \lambda} :=
\{ a C_{\mu} b \, | \, a,b \in \BRn \, (\mbox{resp. } \BRn) \mbox{ and } \mu < \lambda \} {\color{black}{.}}
\end{equation}
It follows from Definition \ref{defcellularalgebra} that $ A^{< \lambda} \subset A^{ \lambda} $ and
so $ A^{ \lambda}/ A^{< \lambda}  $ is a $ \TRn$-bimodule (resp.$\,\,  \BRn$-bimodule).
Let $ \overline{C}_{\lambda}:=C_{\lambda}+A^{< \lambda}$. 
We define the {\it cell module}
$ \Delta^{\TL}_n(\lambda) $ for $ \TRn$ (resp. $ \Delta^{\Blob}_n(\lambda) $ for $ \BRn$) as 
\begin{equation}\label{deficellmodule}
  \Delta^{\TL}_n(\lambda) := \TRn \overline{C}_{\lambda} \, (\mbox{resp. }
\Delta^{\Blob}_n(\lambda) :=
  \BRn \overline{C}_{\lambda})
  \subseteq A^{ \lambda}/A^{< \lambda} {\color{black}{.}}
\end{equation}
We now have the following Theorem. 
\begin{theorem}\label{cellblob}
  Let $ \Delta_n^{\TL}(\lambda) $ be the cell module for $ \TRn$. 
Then $ \Delta_n^{\TL}(\lambda) $ is free over $ R $ with basis
$ \{ \overline{C}_{s t_{\lambda}}\, | \, s \in {\tab}(\lambda) \} $
where $ \overline{C}_{s t_{\lambda}} := {C}_{s t_{\lambda}} +
A^{< \lambda} $. A similar statement holds for $ \Delta_n^{\Blob}(\lambda) $.
\end{theorem}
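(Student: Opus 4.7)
My plan is to prove Theorem~\ref{cellblob} by invoking the standard Graham--Lehrer machinery for cellular algebras, using the cellular structures already established in Theorems~\ref{firstcellular1} and \ref{firstcellular2}. The argument runs in parallel for $\TRn$ and $\BRn$; the only real divergence occurs in the diagrammatic verification for $\BRn$ with $\lambda < 0$, which I expect to be the main obstacle.

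The first step is to describe the quotient $A^\lambda / A^{<\lambda}$. By the cellular basis axiom (Definition~\ref{defcellularalgebra}), $A^\lambda$ is $R$-spanned by $\{C^\mu_{uv} : \mu \le \lambda,\, u,v \in \tab(\mu)\}$ and $A^{<\lambda}$ by the corresponding set with $\mu<\lambda$; since the full family $\{C^\mu_{uv}\}$ is by assumption an $R$-basis of the algebra, both spanning sets are free, and the quotient $A^\lambda/A^{<\lambda}$ is free over $R$ with basis $\{\overline{C}^\lambda_{uv} : u,v \in \tab(\lambda)\}$. In particular the proposed set $\{\overline{C}_{s t_\lambda} : s \in \tab(\lambda)\}$ is $R$-linearly independent, handling one half of the claim.

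For the spanning property I would combine two observations. The cellular axiom applied with $s = t = t_\lambda$ gives, for every $a \in \TRn$,
\[
a \cdot C^\lambda_{t_\lambda t_\lambda} \equiv \sum_{u \in \tab(\lambda)} r_{u t_\lambda a}\, C^\lambda_{u t_\lambda} \pmod{A^{<\lambda}},
\]
so $\TRn \overline{C}_\lambda$ is contained in the $R$-span of the proposed basis. Conversely, for each $s \in \tab(\lambda)$ I would produce $a_s \in \TRn$ such that $a_s \overline{C}_\lambda$ equals a unit multiple of $\overline{C}_{s t_\lambda}$: the natural choice is $a_s$ = the diagram obtained by gluing $s$ at the bottom with $t_\lambda^\ast$ at the top. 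A direct concatenation shows that when $C_\lambda$ is stacked on top of $a_s$, the middle identification of $t_\lambda^\ast$ with $t_\lambda$ produces exactly $k = (n-|\lambda|)/2$ internal unmarked loops and leaves the propagating lines intact, yielding $a_s \cdot C_\lambda = (-2)^k\, C^\lambda_{s t_\lambda}$ modulo $A^{<\lambda}$. Since $(-2)^k$ is a unit in $R$, this forces $\overline{C}_{s t_\lambda} \in \TRn \overline{C}_\lambda$ and completes the proof for $\TRn$.

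The argument for $\BRn$ with $\lambda \ge 0$ is verbatim the same. For $\lambda < 0$ the delicate point---and the main obstacle---is the choice of $a_s$: although the cellular basis element $C^\lambda_{s t_\lambda}$ carries a blob on its leftmost propagating line, I would still take $a_s$ to be the \emph{unblobbed} diagram gluing $s$ and $t_\lambda^\ast$, so that the single blob sitting on the leftmost propagating line of $C_\lambda$ transfers through the middle gluing onto the leftmost propagating line of the product, producing exactly one blob in the correct position and no marked loops. This again gives $a_s \cdot C_\lambda = (-2)^k\, C^\lambda_{s t_\lambda}$ modulo $A^{<\lambda}$ with the unit scalar $(-2)^k$, closing the argument; had I instead chosen $a_s$ to already carry a blob, the product would acquire an extra factor of $x$ from $\UU_0^2 = x\UU_0$, which is not a unit in $R$, so the unblobbed choice is forced.
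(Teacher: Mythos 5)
Your argument is correct, but it is not the route the paper takes: the paper disposes of this statement in one line by citing the straightening algorithm in the proof of Theorem 2.5 of \cite{LPR}, whereas you give a self-contained proof inside the Graham--Lehrer formalism. Concretely, you get linear independence of $\{\overline{C}_{s t_\lambda}\}$ from the fact that the multiplication axiom of Definition \ref{defcellularalgebra} (together with the anti-automorphism $\ast$) keeps products inside the span of cellular basis elements indexed by $\mu\le\lambda$, and you get the spanning statement from the single diagrammatic identity $C_{s t_\lambda}\,C_\lambda=(-2)^k\,C_{s t_\lambda}$, whose scalar $(-2)^k$ is a unit in $R=\comu[x,y]$; this is exactly the classical reconciliation of the paper's definition \eqref{deficellmodule} of $\Delta^{\TL}_n(\lambda)$ (resp.\ $\Delta^{\Blob}_n(\lambda)$) with the Graham--Lehrer one alluded to in Remark \ref{remarkoncellmodule}. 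Your handling of the case $\lambda<0$ is also right: taking $a_s$ to be the gluing of $s$ with $t_\lambda^\ast$ \emph{without} the mark on the leftmost propagating line is what produces exactly one blob in the product and avoids the non-unit factor $x$ coming from $\UU_0^2=x\UU_0$. What your route buys is independence from the external algorithm and an explicit unit scalar; what the paper's citation buys is uniformity, since the same algorithm also underlies the later Soergel-side statements. One small point you should make explicit: the paper defines $A^{<\lambda}$ as the ideal spanned by elements $aC_\mu b$ with $\mu<\lambda$, not as the span of the lower cellular basis elements, so to use the cellular axiom modulo $A^{<\lambda}$ you need the (easy) inclusion of every $C^{\mu}_{ab}$, $\mu<\lambda$, into that ideal; this follows from the same concatenation computation you already perform, namely $C^{\mu}_{a t_\mu}\,C_\mu\,C^{\mu}_{t_\mu b}=(-2)^{2k_\mu}\,C^{\mu}_{ab}$, so it is a routine supplement rather than a gap.
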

\begin{dem}
  This follows from the algorithm given in the proof Theorem 2.5 of \cite{LPR}. 
\end{dem}  
  
\begin{remark}\rm\label{remarkoncellmodule}
Our definition \eqref{deficellmodule} of the cell modules for $  \TRn $ and 
$ \BRn  $ differs slightly from the definition given in \cite{GL}, but in view of
the Theorem the two definitions coincide.
\end{remark}

\medskip

We now come to Soergel calculus.
In \cite{LPR}, an isomorphism $  {A}_w \cong \NB$
was established, where $  {A}_w $
is the endomorphism algebra of a Bott-Samelson object in the Soergel calculus of type $\tilde{A}_1$. 
We aim at generalizing this result to an isomorphism involving $ \BRn$. 

\medskip
To any Coxeter system $ (W, S) $, 
Elias and Williamson associated in \cite{EW}, a diagrammatic 
category $ \tildeSoergelcat  $. 
We fix $ S  := \{ {\color{red} s}, {\color{blue} t} \} $ and let $ W $ be the Coxeter group on
$ S $ given by 
\begin{equation} \label{presentation W}
W := \langle  {\color{red} s}, {\color{blue} t} \, |\,   {\color{red} s}^2 ={\color{blue} t}^2=e \rangle 
\end{equation}
that is, $ W $ is the affine Weyl group of type  $\tilde{A}_1$, or the infinite dihedral group
with Bruhat order $ < $ chosen such that $ 1 $ is the minimal element. 
The definition of ${\mathcal D}_{(W,S)}$ depends on the choice of 
a \emph{realization} $\mathfrak{h}$ of $(W,S)$,
which is a representation $ \mathfrak{h} $ of $ W $ over $ \comu$, arising from a choice of 
{\it simple roots} and {\it simple coroots}, see \cite[Section 3.1]{EW}.
In \cite{LPR}, the realization $ \mathfrak{h} $ was chosen to be the  \emph{geometric representation}
of $W$ defined over $ \comu$, see  \cite[Section 5.3]{H1}, 
with coroots 
$ \alpha^\vee_\ese,  \alpha^\vee_\te  $ being a basis for $ \mathfrak{h} $ and roots 
$ \alpha_\ese ,  \alpha_\te \in \mathfrak{h}^{\ast}  $ given by 
\begin{equation}  \label{equations realization}
\alpha_{\ese } ( \alpha^\vee_\ese )= 2,   \, \, \, \,  \alpha_\te (\alpha^\vee_\ese )= -2,     \, \, \, \,
\alpha_\ese (\alpha^\vee_\te ) = -2,  \, \, \, \,      \alpha_\te (\alpha^\vee_\te )= 2
\end{equation}
that is $ \alpha_{\ese} = -\alpha_\te  $. With this choice of realization of $ (W,S)$, 
the symmetric algebra of the dual representation is 
$R := S(\mathfrak{h}^\ast) = \comu[\alpha_{\ese}]$, or simply a one-variable polynomial algebra.

\medskip

\medskip
In this paper, we choose {\color{black}{for realization $ \mathfrak{h} $ of $ (W,S) $
the {\it dual} of the geometric representation. 
To be precise, we choose}}
$ \mathfrak{h} $ to be the $  \comu$-vector space of dimension two, containing
an element $ \alpha^\vee_\ese = -  \alpha^\vee_\te $,  
such that for a basis 
$ \alpha_\ese,  \alpha_\te $ for $ \mathfrak{h}^{\ast} $ the relations (\ref{equations realization})
hold.
For this choice of $ \mathfrak{h} $ we have that
\begin{equation}\label{seconddefR}
R = S(\mathfrak{h}^\ast) =  \comu[\alpha_{\ese }, \alpha_\te ]
\end{equation}  
that is $ R$ is a two-variable polynomial algebra. We shall use the identifications
\begin{equation}
x := \alpha_{\ese}, \, y:=\alpha_\te 
\end{equation}
such that the two definitions of $ R $ in
\eqref{seconddefR} and \eqref{defR} coincide, 
{\color{black}{but shall in general use the symbols $ x, y $ when referring to 
$ \BRn$ and $ \alpha_{\ese}, \alpha_\te $ when referring to Soergel calculus}}.
    
\medskip
We consider $ R $ to be a $\Z$-graded algebra where
$ \deg(\alpha_{\ese }) = \deg(\alpha_\te) =2 $.
The action of $ W $ on
{\color{black}{$ \mathfrak{h}^{\ast} $ is given by
the formulas}}
\begin{equation}\label{W-action}
\ese   \alpha_\ese = -\alpha_\ese,   \, \, \, \,  \ese \alpha_\te = \alpha_\te + 2 \alpha_\ese,     \, \, \, \,
\te   \alpha_\te = -\alpha_\te,   \, \, \, \,  \te \alpha_\ese = \alpha_\ese + 2 \alpha_\te {\color{black}{.}}
\end{equation}
{\color{black}{It}}
extends to an action of $ W $ on $ R $ and so
we have 
the \emph{Demazure operators} $\partial_\ese , \partial_\te :R \rightarrow R$
{\color{black}defined by:}
\begin{equation}
  \partial_\ese (f) = \frac{f-\ese f}{\alpha_\ese},  \, \, \,\, \, \,\, \, \,   \partial_\te (f) = \frac{f-\te f}{\alpha_\te}
  {\color{black}{.}}
\end{equation}
We have that
\begin{equation}
  \partial_\ese ( \alpha_{\ese})= \partial_\te ( \alpha_{\te})= 2, \, \, \,\, \, \,\, \, \,
    \partial_\ese ( \alpha_{\te})= \partial_\te ( \alpha_{\ese})= -2 {\color{black}{.}}
\end{equation}

Let us now briefly explain the definition of the diagrammatic Soergel category $ \tildeSoergelcat $ for
our choices. 
Let $\Exp $ be the set of 
{\it expressions} over $ S $, that is {\it words}
$ \underline{w}= s_{i_1} s_{i_1} \cdots s_{i_N} $ in the alphabet $ S $.
We consider the empty expression $ \underline{w} = \emptyset$ to be an element of $\Exp $.

\begin{definition}
  Let $ (W,S) $ be as above.
  A Soergel diagram for $ (W,S) $ is a finite graph 
  embedded in $\mathbb{R}\times [0,1]$.
  The arcs of a Soergel diagram are coloured by either colour $\color{red}{red} $
  or colour $\color{blue}{blue}$,
  corresponding to the elements of $S$.
The vertices of a Soergel diagram are of the
{\color{black}{four}} possible types indicated below, univalent vertices (dots) and
trivalent vertices where all three incident arcs are of the same colour. 
 \begin{equation}\label{Vertices}
\raisebox{-.5\height}{\includegraphics[scale=1.0]{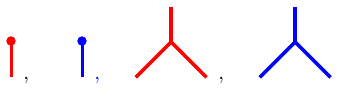}}   
  \raisebox{-15\height}{.}
 \end{equation}

\noindent
A Soergel diagram has its regions, that is the connected components of the
complement of the graph in $\mathbb{R}\times [0,1] $,
decorated by elements of $R$. 
For simplicity, we omit the decoration $ 1 \in R $ when drawing
Soergel diagrams. 
\end{definition}

\noindent
{\color{black}{A vertex of an arc of a Soergel diagram that belongs to}} the boundary of the strip $\mathbb{R}\times [0,1] $
is called a \emph{boundary point}.
{\color{black}{We say that an arc $ l $ of $ D $ is a \emph{boundary dot arc} if one of its vertices
is a dot and the other one is a boundary point.}}
The left to right reading of the boundary points gives rise to two elements of $ \Exp$ 
called the \emph{bottom boundary} and \emph{top boundary} of the diagram, respectively.

\begin{definition}\label{defin endo BS}
  The diagrammatic Soergel category $\tildeSoergelcat$ is the
  monoidal category whose objects are the elements of $ \Exp$ and whose morphisms 
    $ \botts{x}{y} $ are the $R$-modules generated by all Soergel
  diagrams with bottom boundary $\underline{x}$ and top boundary $\underline{y}$, modulo
isotopy and modulo the following local relations

\begin{equation} \label{sgraphA}
  \raisebox{-.5\height}{\includegraphics[scale=1.0]{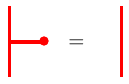}}   
\end{equation}

 \begin{equation}   \label{sgraphB}
\, \, \, \,\, \,   \raisebox{-.5\height}{\includegraphics[scale=1.0]{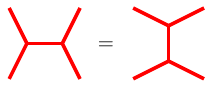}}   
 \end{equation}

\begin{equation}   \label{sgraphC}
 \raisebox{-.5\height}{\includegraphics[scale=1.0]{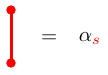}}
\end{equation}

\begin{equation}   \label{sgraphD}
    \raisebox{-.5\height}{\includegraphics[scale=1.0]{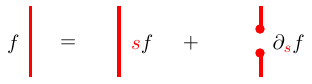}}
\end{equation}

 \begin{equation}   \label{sgraphE}
       \raisebox{-.5\height}{\includegraphics[scale=1.0]{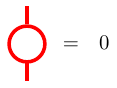}}
\end{equation}

 \medskip
 \noindent
where the relations {\rm (\ref{sgraphA})--(\ref{sgraphE})} also hold if red is replaced by blue.

\medskip
For $ f \in R $ and $ D $ a Soergel diagram, 
the product $ f D$ is defined as the diagram obtained
from $ D $ by replacing the polynomial $ g$ of the leftmost region of $D$ by $fg$.
The multiplication {\color{black}{$ D_1 D_2 $}}
{\color{black}{of diagrams $ D_1 $ and $ D_2 $}}
is given by vertical concatenation 
{\color{black}{with $ D_2 $ on top of $ D_1$}}, where regions containing two polynomials are
replaced by the same regions, but containing the product of these polynomials.
The monoidal structure is given by horizontal concatenation.
There is a natural $ \Z$-grading on
$ \tildeSoergelcat $, extending the
grading on $ R $, in which dots, that is the first two diagrams in (\ref{Vertices}) have degree $ 1$,  and
the trivalents, that is 
the last two diagrams in (\ref{Vertices}), have degree $-1$.
\end{definition}

\begin{remark}\rm\label{cyclotomic}
  For more details concerning the definition 
  %\cancel{\color{black}{of \cite{LPR}}}
  of $ \tildeSoergelcat$ one should consult
  \cite{LPR} or the original paper \cite{EW}. Note that 
  apart from the choice of realization of $ (W,S)$, the relations appearing in 
  Definition (\ref{defin endo BS}) also 
  differ slightly from the ones appearing in the corresponding Definition 3.2 in \cite{LPR}. To be precise, 
  in Definition 3.2 of \cite{LPR}, there is a final relation
 \begin{equation}   \label{sgraphfian}
       \raisebox{-.5\height}{\includegraphics[scale=1.0]{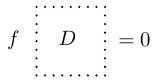}}
\end{equation}
 where $ D$ is any Soergel diagram and $ f  $ is any homogeneous polynomial of 
 strictly positive degree, multiplied on the left on $D$. 
\end{remark}

\begin{definition}\label{defin endo BS tilde}
We define $ \tildeSoergelcatC $ to be the category
obtained from $ \tildeSoergelcat $ by adding relation \eqref{sgraphfian}. 
\end{definition}  

\begin{remark}\rm
{\color{black}{It is shown in \cite{EW} that there is an equivalence between 
$ \tildeSoergelcat $ and the category of Soergel bimodules for $ (W,S) $. It induces an equivalence between 
$ \tildeSoergelcatC $ and the category of Soergel modules. }}
\end{remark}
  
Let now $ n $ be a fixed non-negative integer and define $\underline{w}
\in \Exp $ of length $ n $ via 
\begin{equation}\label{furthernotice}
  \underline{w}  := {\color{red} s} {\color{blue} t} {\color{red} s} \cdots  \, \, \, \, \,   
\end{equation}  
such that the first generator of
$  \underline{w}$
is ${\color{red} s} $ but the last generator
depends on the parity of $n$.   
We then introduce $ \tilde{A}_w $ as follows 
\begin{equation}\label{dfinAw}
  \tilde{A}_w :=   \mbox{End}_{\tildeSoergelcat} (\underline{w}) {\color{black}{.}}
\end{equation}
By the definitions, $ \tilde{A}_w $
is an $ R $-algebra with multiplication $ D_1 D_2 $ given by concatenation of $ D_1 $ and $D_2$ and
scalar product $f D $ by multiplication of $ f $ with the polynomial appearing in the leftmost region
of $D$. Its one-element is denoted $1$, and is as follows 
\begin{equation}
  1 := \quad     \raisebox{-.4\height}{\includegraphics[scale=0.8]{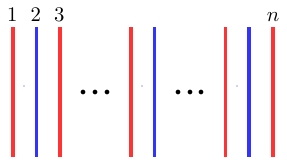}}
    \raisebox{-15\height}{ {\color{black}{.}}}
\end{equation}

\medskip

As in \cite{LPR} we introduce certain elements of $ \tilde{A}_w $ that shall play a key role
throughout.
For $ i =1,\ldots, n-2 $, let $ U_i $ 
be the following element of $ \tilde{A}_w $
\begin{equation}
     U_i := \quad     \raisebox{-.4\height}{\includegraphics[scale=0.8]{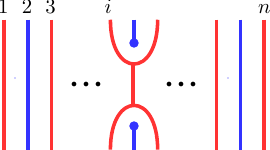}}
\end{equation}
\noindent
and for $ i=0$, set 
\begin{equation}
     U_0 := \quad     \raisebox{-.4\height}{\includegraphics[scale=0.8]{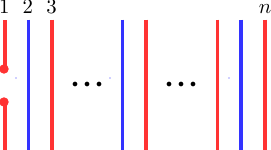}}
    \raisebox{-15\height}{ {\color{black}{.}}}
\end{equation}

\section{Isomorphism Theorems}\label{isomorphism theorems}
The following Theorem is fundamental {\color{black}{for this paper}}. 
\begin{theorem} \label{fundamental}
  There is a homomorphism of $ R $-algebras $ \varphi: \BRnn \rightarrow  \tilde{A}_w$ given 
  by $\mathbb{U}_i\mapsto U_i $ for $ i = 0, 1, \ldots, n-2$.
\end{theorem}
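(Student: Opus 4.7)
The plan is to verify that the assignment $\mathbb{U}_i \mapsto U_i$ respects each of the five defining relations of $\BRnn$ listed in Definition \ref{deftwoblob}, so that $\varphi$ extends to a well-defined $R$-algebra homomorphism. Since the codomain $\tilde{A}_w$ is naturally an $R$-algebra via decoration of the leftmost region, $R$-linearity is automatic once the generator relations are checked.

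First, I would dispose of the three Temperley-Lieb type relations (\ref{eq oneTL})--(\ref{eq threeTL}) among the $U_i$ for $1 \le i \le n-2$. Each of these is a purely local diagrammatic identity in Elias-Williamson calculus: $U_i^2 = -2 U_i$ follows from the Frobenius/barbell relation \eqref{sgraphA} together with the fact that an internal circle evaluates by a dot-evaluation that produces the scalar $-2 = \partial_{\bullet}(\alpha_\bullet)$ in our realization; the braid/three-term relation $U_i U_j U_i = U_i$ for $|i-j|=1$ is an instance of the Zamolodchikov-type relation \eqref{sgraphD}, sometimes together with a local planar isotopy; and the far commutation $U_i U_j = U_j U_i$ for $|i-j| > 1$ is a horizontal separation of diagrams. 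These verifications are identical to those carried out in \cite{LPR} for the nil-blob setting, because they involve only monochromatic pieces away from the leftmost strand where the realization does not intervene.

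The heart of the proof is to verify the two ``boundary'' relations \eqref{eq four} and \eqref{eq five}, namely $U_0^2 = x\, U_0$ and $U_1 U_0 U_1 = y\, U_1$, where $x = \alpha_\ese$ and $y = \alpha_\te$. For $U_0^2$ I would stack two copies of the $U_0$-diagram and then apply the dot-squared rule \eqref{sgraphA} to the two dots that collide in the middle: this produces a factor of $\alpha_\ese$ in the appropriate region, which by the left-region convention for scalar multiplication slides out to yield $\alpha_\ese$ times the remaining diagram, which is exactly $U_0$. For $U_1 U_0 U_1$ the computation is a slightly longer reduction: after stacking, one uses an isotopy to isolate a blue dot-dot configuration together with a red trivalent/dot cluster; applying \eqref{sgraphA} on the blue side produces the scalar $\alpha_\te$, and the surviving monochromatic pattern collapses via \eqref{sgraphA}-\eqref{sgraphB} to the diagram representing $U_1$. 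The key point where the two-parameter realization enters, in contrast to \cite{LPR}, is that the dot-dot evaluation now returns $\alpha_\ese$ or $\alpha_\te$ depending on the colour, and these are now independent generators of $R$; this is precisely what permits the deformation from $\NB$ to $\BRnn$.

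The step I expect to be the main obstacle is the clean diagrammatic reduction for $U_1 U_0 U_1$: one must be careful that no auxiliary polynomial gets trapped in a non-leftmost region, and one has to move scalars to the leftmost region using isotopy without inadvertently crossing a coloured arc (which would require a Demazure-type correction from the relations analogous to the broken-line moves in $\tildeSoergelcat$). Once the diagrammatic normalization is carried out and the scalar $\alpha_\te$ is correctly placed on the leftmost region of the surviving $U_1$-diagram, the identity $U_1 U_0 U_1 = y\, U_1$ follows, and $\varphi$ is a well-defined $R$-algebra homomorphism.
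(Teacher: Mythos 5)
Your proposal is correct and follows essentially the same route as the paper: one checks the five defining relations of $\BRnn$ on the images $U_i$, quoting \cite{LPR} for the Temperley--Lieb relations among $U_1,\ldots,U_{n-2}$ and doing the two boundary relations $U_0^2 = x\,U_0$ and $U_1U_0U_1 = y\,U_1$ by an explicit diagrammatic reduction in which the dot/barbell evaluations now yield the two independent roots $\alpha_{\ese}=x$ and $\alpha_{\te}=y$ of the dual realization. Only two cosmetic attributions are off and do not affect the argument: the scalar $-2$ in $U_i^2=-2U_i$ comes from the cross-colour Demazure evaluation $\partial_{\ese}(\alpha_{\te})=\partial_{\te}(\alpha_{\ese})=-2$ rather than a same-colour one, and since $W$ is infinite dihedral there is no two-colour (Zamolodchikov-type) relation, so $U_iU_jU_i=U_i$ follows from the one-colour relations \eqref{sgraphA}--\eqref{sgraphC} and isotopy alone.
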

\begin{dem}
 The proof is almost identical to the proof of Theorem 3.4 in \cite{LPR}.
  We must check that $U_0, U_1, \ldots , U_{n-2} $ satisfy the relations
  given by the $ \mathbb{U}_i $'s in Definition (\ref{deftwoblob}).
The verification of the relations \eqref{eq one}, \eqref{eq two} and
\eqref{eq three} is done exactly as in \cite{LPR} whereas 
relation \eqref{eq four}, for example for $ n=4 $, is verified as follows 

\medskip 
\begin{equation}
U_1 U_0 U_1 \, \, = \, \,
\raisebox{-.45\height}{\includegraphics[scale=0.6]{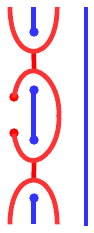}}  
\, \, \, \,= \, \, \,
\raisebox{-.45\height}{\includegraphics[scale=0.6]{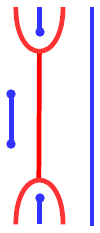}}  
\, \, \, \,= \, \, \,
\raisebox{-.45\height}{\includegraphics[scale=0.6]{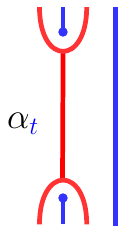}}
 \, \, = \, \,{ \color{black}{ y  U_1 }}
\end{equation} 
and \eqref{eq five}, for example for $ n=4 $, is verified as follows 

%\LARGE $  \alpha_{ {\color{red}t }} $ 

\medskip 
\begin{equation}
U_0^2 \, \, = \, \,
\raisebox{-.45\height}{\includegraphics[scale=0.6]{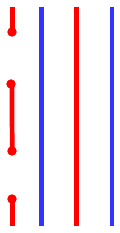}}  
\, \, \, \,= \, \, \,
\raisebox{-.45\height}{\includegraphics[scale=0.6]{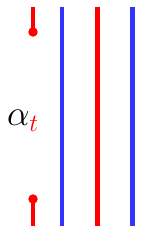}}  
 \, \, = \, \, {\color{black}{  x U_0}}.
\end{equation} 
This proves the Theorem.
\end{dem}

\medskip
For general $(W,S)$ there is a, somewhat unwieldy,  
recursive procedure for constructing
an $ R$-basis for the morphisms  $\botts{x}{y} $, for any
$ \underline{x}, \underline{y} \in \Exp$. It is a diagrammatic
version of Libedinsky's {\it double
  %\cancel{\color{black}{light}}
  leaves basis}
for Soergel bimodules and the basis elements are
also called double
%\cancel{\color{black}{light}}
leaves. 
For $ W $ the infinite dihedral group, 
there is however a non-recursive description of the double
%\cancel{\color{black}{light}}
leaves basis that was used
extensively in \cite{LPR}, and that also plays an important role in the present paper.

\medskip
The double leaves diagram basis elements for $\tilde{A}_{w}$ 
are built up from top and bottom `half-diagrams', similarly to Temperley-Lieb and
blob diagrams. 
Let us explain
these half-diagrams. 

\medskip
    {\color{black}{By the isotopy relations for Soergel diagrams, we have the following diagram identities
\begin{equation}\label{33} 
  \raisebox{-.43\height}{\includegraphics[scale=0.8]{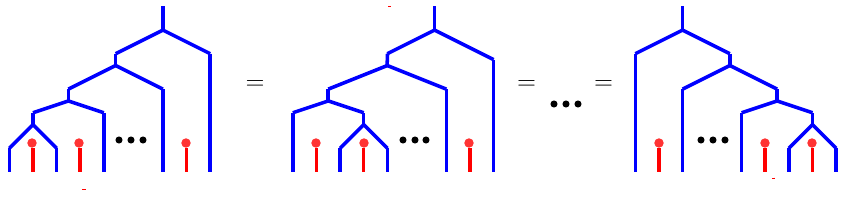}}      
\end{equation} 
and we in shall in general represent these diagrams as follows
\begin{equation}\label{34}
  \raisebox{-.43\height}{\includegraphics[scale=0.8]{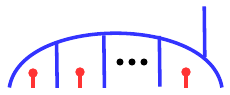}}      
    \raisebox{-12\height}{ {\color{black}{.}}}
\end{equation} 
The diagrams in \eqref{34} are called \emph{hanging full birdcages}. We shall also consider 
\emph{non-hanging full birdcages} that look as follows
    }}
\begin{equation}\label{35}
  \raisebox{-.43\height}{\includegraphics[scale=0.8]{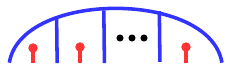}}      
    \raisebox{-8\height}{ {\color{black}{.}}}
\end{equation}
{\color{black}We sometimes omit the word 'full' when referring to 'full birdcages'.}
{\color{black}{We say that the birdcages in 
    \eqref{33}, \eqref{34} and \eqref{35} are of \emph{colour blue}, but shall
    {\color{black}{also}} allow birdcages of
    \emph{colour red}, as follows
\begin{equation}\label{36}
  \raisebox{-.43\height}{\includegraphics[scale=0.8]{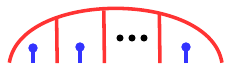}}      
    \raisebox{-8.5\height}{{\color{black}{.}}}
\end{equation}}}
{\color{black}{We further allow \emph{degenerate}, non-hanging and hanging, full birdcages as follows}}
\begin{equation}
\raisebox{-.20\height}{\includegraphics[scale=0.8]{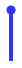}}\, ,  \, \,  \, \,   \, \,  \, \,  \, \, 
\raisebox{-.11\height}{\includegraphics[scale=0.8]{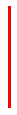}} \, \,  \, \, .
\end{equation}
We define the \emph{length} of a full birdcage to be the number of {\color{black}{\emph{enclosed}}}
dots in it, where 
the birdcages of length {\color{black}{zero}} are the degenerate ones. 
A full birdcage which is not degenerate is called
\emph{non-degenerate}.
{\color{black}{In the following examples, the first birdcage is
    of length 4 whereas the last two are {\color{black} of length zero}.
\begin{equation}
  \raisebox{-.20\height}{\includegraphics[scale=0.8]{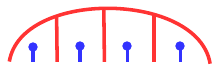}}\, ,  \, \,  \, \,   \, \,  \, \,  \, \, 
  \raisebox{-.20\height}{\includegraphics[scale=0.8]{jaulaEx2.pdf}}\, ,  \, \,  \, \,   \, \,  \, \,  \, \, 
  \raisebox{-.15\height}{\includegraphics[scale=0.8]{jaulaEx3.pdf}}
    \raisebox{-1\height}{ {\color{black}{.}}}
\end{equation} 
}}
We also consider \emph{top full birdcages}, that are obtained from
bottom full birdcages by reflecting through a horizontal axis.

\medskip
{\color{black}{We now introduce the operation of}} 
replacing a degenerate non-hanging full birdcage, {\color{black}{in other words a boundary dot arc}}, by a non-hanging
non-degenerate full birdcage as follows

\begin{equation}\label{insertionStepFirst}
\! \! \! \! \! \! \! \! \! \! \! \! \! \!
 \raisebox{-.43\height}{\includegraphics[scale=0.8]{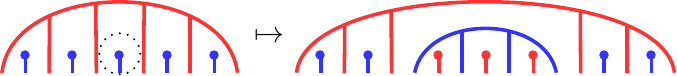}}      
    \raisebox{-11\height}{ {\color{black}{.}}}
\end{equation}

{\color{black}{In the notation of \cite{LibedinskyGentle}}}, a \emph{birdcagecage}
is any diagram
that can be obtained by performing 
the above operation repeatedly a number of times, 
for example

\begin{equation}\label{insertionStep}
   \raisebox{-.43\height}{\includegraphics[scale=0.8]{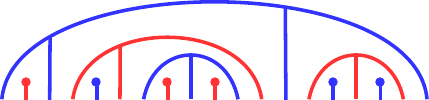}}      
    \raisebox{-13\height}{ {\color{black}{.}}}
\end{equation}

A {\it light leaf diagram} element $ D $ for
$\tilde{A}_{w}$
is by definition a horizontal concatenation of 
birdcagecages as indicated below in (\ref{birdcagecagezonesA}),
with bottom boundary $ \underline{w}$. 
Zone A consists of a number of non-hanging birdcagecages whereas 
zone B consists of a number of hanging birdcagecages, but 
zone C
{\color{black}{may consist}} of at most one 
non-hanging birdcagecage.

\begin{equation}{\label{birdcagecagezonesA}}
 \! \!  \! \! \!  \! \! \!  \! \! \!
  \raisebox{-.43\height}{\includegraphics[scale=0.7]{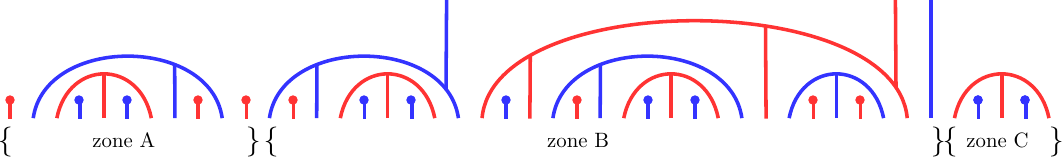}}      
    \raisebox{-11\height}{ {\color{black}{.}}}
\end{equation}

\medskip
The 
hanging birdcagecages in zone B of a light leaves diagram define an element $ v \in W $.
In the
above example we have $ v = \color{blue} t \color{red} s \color{blue} t$. 
The \emph{double leaves basis of $ \tilde{A}_w $} is obtained by running over all $ v \le w  $
and over all pairs of light leaves for $  \tilde{A}_w $ that are 
associated with that $ v $. For each such pair $ (D_1, D_2) $ 
the second component $ D_2 $ is reflected through a horizontal axis, and the two
components are glued together. The resulting diagram is a double leaf, 
for example

\begin{equation}{\label{birdcagecagezonesB}}
 \! \!  \! \! \!  \! \! \!  \! \! \!
  \raisebox{-.43\height}{\includegraphics[scale=0.7]{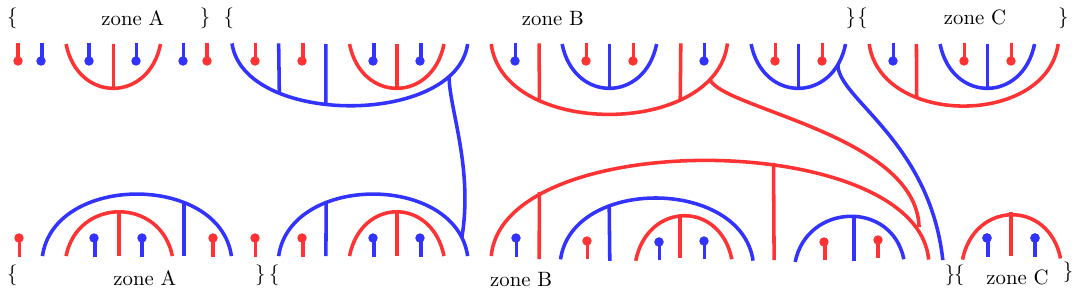}}      
    \raisebox{-30\height}{ {\color{black}{.}}}
\end{equation}

\medskip
The fundamental result concerning double leaves is the fact that they form
an $R$-basis for $\tilde{A}_w$. In fact, a stronger Theorem holds.
Let $ \tilde{\Lambda}_{{w}} := \{ v \in W | v \le w \} $, endowed with poset structure via the Bruhat order $ < $.
For $ v \in  \tilde{\Lambda}_{w} $ let 
$ {\tab}_w(v )  $ be the set of light leaves for $ \tilde{A}_w $ defining $ v$ in the above sense,
and for $ D_1, D_2  \in {\tab}_w(v ) $
let $ C_{D_1, D_2}^{v} \in \tilde{A}_w $ be the double leaf obtained by gluing as above.
We now have the
following Theorem. 
\begin{theorem}\label{thenwehavethe}
  The triple $ ( \tilde{\Lambda}_{{w}}, {\tab}_w(v), C) $
  defines a cellular basis structure on $ \tilde{A}_w$. 
\end{theorem}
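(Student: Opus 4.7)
The plan is to verify the three defining properties of a cellular basis from Definition \ref{defcellularalgebra}: that $\{C^v_{D_1,D_2}\}$ is an $R$-basis of $\tilde{A}_w$, that there is a compatible anti-involution, and that the structure constants satisfy the Bruhat-triangularity condition with respect to the order on $\tilde{\Lambda}_w$.

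For the basis property, I would invoke the double leaves basis theorem of Elias--Williamson \cite{EW}. In the generality of diagrammatic Soergel categories for arbitrary Coxeter systems, they prove that the double light leaves give $R$-bases of all morphism spaces $\botts{x}{y}$. Specialized to $\underline{x}=\underline{y}=\underline{w}$ this yields exactly the claimed $R$-basis of $\tilde{A}_w = \mbox{End}_{\tildeSoergelcat}(\underline{w})$. Under the non-recursive birdcagecage description valid for the infinite dihedral group (cf.\ \cite{LPR}), the indexing set is precisely $\coprod_{v\le w}\tab_w(v)\times\tab_w(v)$, with the element $v\in W$ read off from the hanging birdcagecages in zone B.

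For the anti-involution, define $\ast\colon\tilde{A}_w\to\tilde{A}_w$ by reflection of Soergel diagrams through a horizontal axis. A direct inspection shows that each of the relations \eqref{sgraphA}--\eqref{sgraphE} is invariant under horizontal reflection, so $\ast$ is a well-defined $R$-linear map; it reverses the order of vertical concatenation, hence is an anti-homomorphism; and because $D_2$ was already reflected before gluing in the definition of $C^v_{D_1,D_2}$, one has $(C^v_{D_1,D_2})^\ast = C^v_{D_2,D_1}$, as required.

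For the straightening condition, one must show that for any $a\in\tilde{A}_w$,
\[
aC^v_{D_1,D_2}=\sum_{D\in\tab_w(v)} r_{D,D_1,a}\, C^v_{D,D_2}\ +\ \text{lower terms,}
\]
where ``lower'' means an $R$-linear combination of $C^u_{D',D_2}$ with $u<v$ in the Bruhat order. The strategy is to rewrite the product $a\cdot D_1$, where $D_1$ is a light leaf whose zone-B hanging birdcagecages define $v$, as an $R$-linear combination of light leaves by pushing $a$ through $D_1$ using the Soergel calculus relations. By the structural properties of the birdcagecage picture, diagrammatic concatenation can only leave the zone-B portion intact (keeping $v$) or collapse part of it (strictly decreasing $v$ in Bruhat order), while rewritings within zones A and C, as well as polynomial decorations, stay at the same Bruhat stratum. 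Since $D_2$ is untouched by this procedure, the right-hand side is immediately of the stated form.

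The main obstacle is making the last step rigorous. In the Elias--Williamson framework the required triangularity is encoded in the recursive construction of light leaves and the fact that the Bruhat order on $W$ is reflected in the ``subordinate'' order on subexpressions of $\underline{w}$. For our infinite dihedral case, however, one can exploit the concrete non-recursive birdcagecage picture directly: multiplication by any of the diagrammatic generators (dot, trivalent vertex, braid, polynomial decoration) acts on the zone-B part of a light leaf by either fixing $v$ or strictly decreasing it, with the decreasing contributions constituting the lower terms. The required verification is therefore a finite case analysis modelled on the computations behind Theorem 3.4 of \cite{LPR}, and is where the bulk of the work lies.
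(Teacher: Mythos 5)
Your proposal is correct and follows essentially the same route as the paper, whose proof of this theorem is simply a citation of \cite{LibedinskyGentle} and \cite{EW}: there the double light leaves are shown not only to form an $R$-basis of the morphism spaces but to satisfy the full cellular axioms (the anti-involution given by horizontal reflection and the Bruhat-triangularity of the structure constants). In particular, the case analysis you flag as ``the bulk of the work'' need not be redone in the infinite dihedral case, since the triangularity is already part of the cited results.
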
  
\begin{dem}
See \cite{LibedinskyGentle} and \cite{EW}.
\end{dem}

\medskip
Following \cite{LPR},  we now introduce $ A_w \subseteq \tilde{A}_w$
as follows.

\begin{definition}\label{defAw} 
Let $ A_w $ be the span in $ \tilde{A}_w$ of all
double leaves {\color{black}{with}} empty zone C, {\color{black}{or equivalently, $ A_w $ is the free $ R$-module
    with basis given by the double leaves of empty zone C.}}
\end{definition}

Our next Theorem is an analogue of Theorem 3.8 and Corollary 3.9 of \cite{LPR},
%% {\color{green}{\sout{but 
%% note that the proofs in \cite{LPR} that depend on dimension arguments in linear algebra
%% must be modified}}},
{\color{black}{although the proofs of parts ${\bf b)}$ and ${\bf c)}$ of the Theorem are different
    from the proofs of the corresponding statements in \cite{LPR},
    since the algebras considered in \cite{LPR} are $ \CC$-algebras
    whereas the algebras in the present paper are $R$-algebras. 
    Therefore, in the present article some extra care is necessary since nonzero coefficients of $ R $ need not be invertible. 
    Moreover, the arguments in \cite{LPR} depend on the linear algebra fact 
    that injective linear transformations $ f: V \rightarrow W $
    between vector spaces $ V $ and $ W $ of the same finite dimension are isomorphisms.
    The analogous statement 
    is false for free $ R$-modules, which is the main reason why the proofs of the present paper are
    different from the ones in \cite{LPR}.}}
%% {\color{green}{\sout{
%%       Note also that unlike \cite{LPR}, we avoid using JM-elements.}}}
\begin{theorem}\phantomsection\label{mainTheoremSection3new}
  \begin{description}
  \item[a)] The cardinality of double leaves of empty zone C
    is $ \binom{2n}{n}$ and so ${A}_w$ is
    free over $ R$ of {\color{black}{rank}} $  \rank   A_w  = \rank  \BRn = \binom{2n}{n}$.
  \item[b)] ${A}_w$ is an $ R$-algebra. It is the subalgebra of $\tilde{A}_w$
    generated by $U_0, U_1, \ldots, U_{n-2}$.
  \item[c)]  The homomorphism $ \varphi: \BRnn \rightarrow  \tilde{A}_w$ from
Theorem \ref{fundamental} induces an isomorphism 
$ \varphi: \BRnn \rightarrow  {A}_w$.
    \end{description}
 \end{theorem}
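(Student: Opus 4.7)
The plan is to establish parts (a), (b), (c) simultaneously via an explicit correspondence between the diagram basis of $\BRnn$ and the set of double leaves of $\tilde{A}_w$ with empty zone C. By Theorem \ref{thenwehavethe} the double leaves form a cellular $R$-basis of $\tilde{A}_w$, so any subset of them is automatically $R$-linearly independent; this lets me focus on producing the right combinatorial bijection and then verifying multiplicative closure.

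For part (a), I would first unwind the structure of a light leaf with empty zone C. Such a light leaf is determined by the sequence of non-hanging birdcagecages in zone A together with the hanging birdcagecages in zone B. I would set up a bijection between these half-diagrams and the blob half-diagrams on $n-1$ points indexing the cellular basis of $\BRnn$ from Theorem \ref{firstcellular2}, so that pairs of such light leaves defining a common $v \in W$ correspond to pairs $(s,t)$ indexing the cellular basis of $\BRnn$. Summing $|\tab(\lambda)|^{2}$ over $\lambda \in \Lambda_{\pm(n-1)}$ then gives the dimension of $\BRnn$, namely $\binom{2(n-1)}{n-1}$ by \eqref{rankblob} (I suspect that the $\binom{2n}{n}$ appearing in the statement is a typo for $\binom{2(n-1)}{n-1}$, since $\varphi$ starts from $\BRnn$).

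For (b), I would first check that each generator $U_i$ is itself a double leaf with empty zone C, thereby placing each $U_i$ in $A_w$. To show that $A_w$ is closed under multiplication and equals the subalgebra generated by $U_0,\dots,U_{n-2}$, I would argue by induction on the cellular order in $\tilde{A}_w$ that for every diagram basis element $C^{\B,\lambda}_{s,t}\in\BRnn$ its image $\varphi(C^{\B,\lambda}_{s,t})$ equals the specific double leaf matched with $(s,t)$ by the bijection from (a), plus a linear combination of double leaves $C^{\mu}_{D_1,D_2}$ with $\mu<v$, each of which also has empty zone C. A routine inspection of the Soergel relations \eqref{sgraphA}--\eqref{sgraphE} shows that rewriting a product of $U_i$'s can never create a non-empty zone C, which delivers the unitriangular expansion and with it the multiplicative closure.

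For (c), part (b) shows that $\varphi$ has image contained in $A_w$, and the unitriangular relationship established in (b) shows in addition that $\varphi$ is surjective onto $A_w$. By (a), both $\BRnn$ and $A_w$ are free $R$-modules of the same finite rank, and a surjective $R$-linear map between two free modules of equal finite rank over the commutative ring $R=\comu[x,y]$ is necessarily an isomorphism; hence $\varphi:\BRnn\to A_w$ is bijective. The principal obstacle is the combinatorial bookkeeping in the unitriangular step of (b): one must verify that rewriting a product of $U_i$'s via the Soergel relations never produces a zone-C birdcagecage and that the polynomial decorations from $R$ match up correctly, so that the combinatorial bijection of (a) lifts to a true leading-term identification in the double leaves basis.
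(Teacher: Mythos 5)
Your overall architecture runs parallel to the paper's: part (c) is proved there exactly as you propose (surjectivity of $\varphi$ onto $A_w$, equality of ranks, and then Vasconcelos' theorem that a surjective $R$-linear map between free modules of the same finite rank is an isomorphism), and your parenthetical about the binomial coefficient is right — for (a) to be compatible with (c) and with \eqref{rankblob} the count must be $\binom{2(n-1)}{n-1}=\dim_R\BRnn$, not $\binom{2n}{n}$. For (a) itself the paper simply quotes Theorem 3.8 c) of \cite{LPR}; your plan to re-derive the count via a bijection between empty-zone-C light leaves and blob half-diagrams is reasonable, but that bijection is essentially the content of the quoted result and is not constructed in your proposal.

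The genuine gap is in (b), which is where all the substance of the theorem sits. Your argument hinges on a unitriangular expansion of $\varphi(C^{\lambda}_{s,t})$ over empty-zone-C double leaves, and you justify its key ingredient — that rewriting a product of $U_i$'s never produces a non-empty zone C — by ``routine inspection of the Soergel relations \eqref{sgraphA}--\eqref{sgraphE}''. This is not routine and cannot be read off from the local relations in isolation: the paper establishes the corresponding inclusion $A_w^{\prime}\subseteq A_w$ by a case-by-case diagrammatic analysis of the effect of multiplying $U_i$ ($i>0$) and $U_0$ on birdcagecages (insertion of a length-one birdcage, transfer of points between adjacent birdcages, appearance of the scalars $-2$, $\alpha_\ese$, $\alpha_\te$, and the splitting of a birdcage produced by $U_0$), and it obtains the reverse inclusion $A_w\subseteq A_w^{\prime}$ — which your unitriangularity claim would subsume — by explicitly manufacturing every empty-zone-C double leaf from products of the elements \eqref{Csoergel} and \eqref{CsoergelAB}, including the symmetry-breaking step \eqref{symm2}. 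Moreover, your induction ``on the cellular order'' is not actually set up: you do not say what decreases when a relation is applied, nor why the matched double leaf occurs with coefficient exactly $1$ rather than some unit or polynomial (note that if you did prove that leading-term statement, injectivity of $\varphi$ would follow at once from the linear independence of double leaves, and the Vasconcelos step in (c) would be superfluous). As written, the zone-C stability and the leading-term identification are assertions rather than arguments, so the proposal is a plausible strategy but not yet a proof; completing it requires supplying the diagrammatic case analysis that the paper (following \cite{LPR}) carries out.
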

\begin{dem}
  {\color{black}{To show
      ${\bf a)}$ we first note that the cardinality of the set of double leaves of empty zone C 
      is given by Definition 3.7 in \cite{LPR} and Theorem 3.8 ${\bf c)}$ in \cite{LPR},
      and so the statements about $ A_w $ 
      are a direct consequence of Definition \ref{defAw} and \eqref{rankblob}.}}
%  {\color{green}{\sout{Part ${\bf a)}$ is Theorem 3.8 c) of \cite{LPR}.}}}

         \medskip
  In order to prove ${\bf b)}$ we define $ {A}_w^{\prime}$ to be
  the subalgebra of $  \tilde{A}_w $
  generated by $U_0, U_1, \ldots, U_{n-2}$ and must show that $  {A}^{\prime}_w  =  {A}_w $. 
 We first show that $  {A}^{\prime}_w  \supseteq {A}_w $. 

\medskip
 Recall the diagram $ C_{\lambda} \in \BRn$ from {\eqref{tlambda2}}. 
For $ \lambda \ge 0 $ we have that 
\begin{equation}{\label{Csoergel}}
\varphi(C_{\lambda}) = U_1 U_3 \cdots U_{2k-1} =   \raisebox{-.5\height}{\includegraphics[scale=0.8]{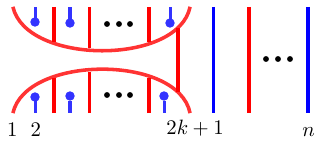}}      
\end{equation}
and for $ \lambda < 0 $ we have 
\begin{equation}{\label{CsoergelAB}}
 \begin{split} 
\varphi(C_{\lambda})  =  (U_1 U_3 \cdots U_{2k-1}) U_0 (U_2 U_4 \cdots U_{2k})   U_1 U_3 \cdots U_{2k-1} = \\
\raisebox{-.52\height}{\includegraphics[scale=0.8]{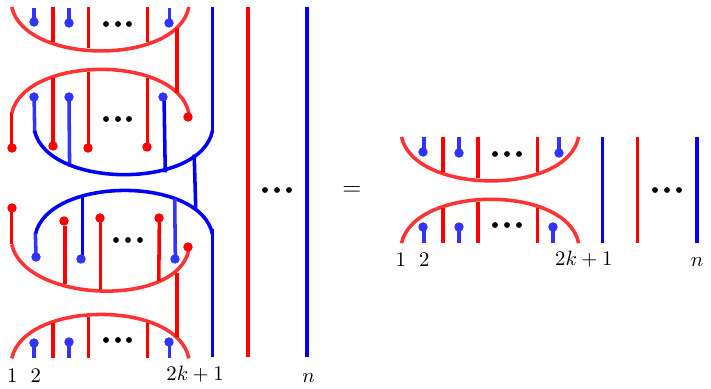}}
    \raisebox{-21\height}{ {\color{black}{.}}}
 \end{split} 
 \end{equation}
{\color{black}{We also need the diagrams
\begin{equation}{\label{CsoergelABC}}
  \varphi(U_i U_{i+2} \cdots U_{i+2k} ) =
   \raisebox{-.5\height}{\includegraphics[scale=0.8]{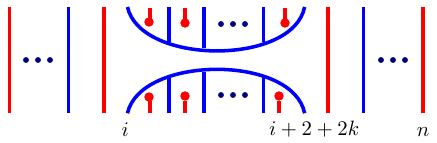}}      
    \raisebox{-17\height}{ {\color{black}{.}}}
\end{equation}
}}

\medskip
Now, multiplying together appropriate diagrams of the form {\eqref{Csoergel}} and of the form {\eqref{CsoergelAB}}
we deduce that any diagram of the form
\begin{equation}{\label{symm1}}
\raisebox{-.5\height}{\includegraphics[scale=0.7]{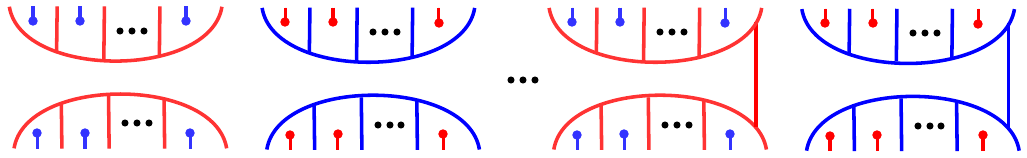}}
\end{equation}
belongs to $ {A}_w^{\prime}$
where the number of hanging birdcages on the right is at least one.
The number of non-hanging birdcages on top of {\eqref{symm1}} is the same
as on the bottom, but we need to break this symmetry, 
%% {\color{green}{\sout{This was achieved in \cite{LPR} using
%% the $ L_i$'s, but in view of the relations \eqref{sgraphD} and \eqref{sgraphE}
%% it can in fact also be achieved
%% via multiplication on top or bottom with appropriate diagrams of the form given in 
%% {\eqref{Csoergel}} and {\eqref{CsoergelAB}}. 
%% To illustrate this, we give the following example where $ n=10$. }}}
{\color{black}{that is, we must show that diagrams $ D $ as in 
{\eqref{symm1}}, but with unequal numbers of top and bottom
    non-hanging birdcages in zone A, also belong to $ {A}_w^{\prime}$. 
Note that the number of top and the number of
 bottom
 non-hanging birdcages in zone A are always of the same parity and so, in order
 to break this symmetry in zone A, we first give a procedure for 
 splitting any non-hanging and non-degenerate birdcage in zone in A in three non-hanging birdcages, and still stay in $ {A}_w^{\prime}$. 

\medskip
 If the non-hanging birdcage is the leftmost one, it can easily be split in three parts via multiplication
 by $ U_0 $, as illustrated below
 }}
\begin{equation}{\label{symm2}}
  \raisebox{-.5\height}{\includegraphics[scale=0.8]{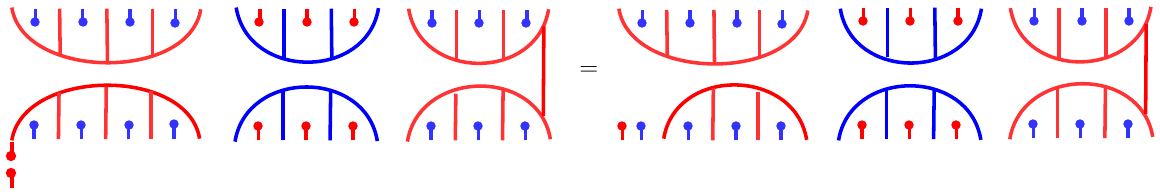}} 
 \raisebox{-17\height}{ .}
\end{equation}
    {\color{black}{Note that this belongs to
$ {A}_w^{\prime}$. 
In the non-hanging birdcage is not the leftmost one, 
we first notice that 
multiplication with appropriate $ U_i$'s has the effect of 'moving'
a dot from one birdcage to its neighbouring birdcage,
as illustrated below in the following two examples. 
\begin{equation}\label{neighbouring}
\begin{array}{l}
  \raisebox{-.5\height}{\includegraphics[scale=0.8]{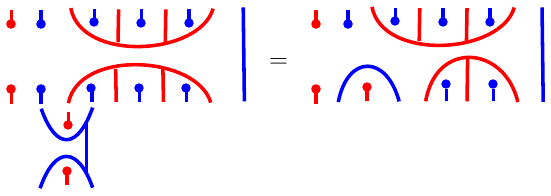}}, \\ \\
\raisebox{-.5\height}{\includegraphics[scale=0.8]{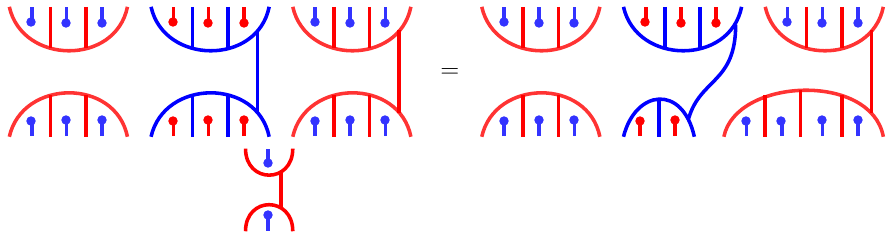}}
    \raisebox{-7\height}{ {\color{black}{.}}}
\end{array}
\end{equation}  
Using this we can also 'move' any non-hanging birdcage to the leftmost position
and then multiply it by $ U_0 $, to split it in three birdcages. Next, we 
use \eqref{neighbouring} to move the birdcages to the desired positions. 
The result of this belongs to $ {A}_w^{\prime}$ and so the symmetry in zone A has been broken.

In fact, once we have the right number of birdcages in zone A, we can 
use successive multiplications of the types given in \eqref{neighbouring},
to obtain any combination of desired birdcage lengths, as opposed to
\eqref{symm2} that always produces degenerate birdcages. 

    }}

    \medskip
{\color{black}{    
Now in \eqref{symm1}, we must also break the symmetry with respect
to birdcage lengths in zone B. 
But in order to break this symmetry we proceed just as we did in the last step for zone A, applying
\eqref{neighbouring} successively, adjusting
the lengths of the relevant birdcages until they are as desired. }}

\medskip
All in all we have now shown that any diagram of the form described in \eqref{318}
belongs to $ {A}_w^{\prime}$, where the number and the lengths of the top and bottom birdcages in zone A
may differ, as may the lengths of birdcages in zone B. 

\begin{equation}\label{318}
  \raisebox{-.5\height}{\includegraphics[scale=0.7]{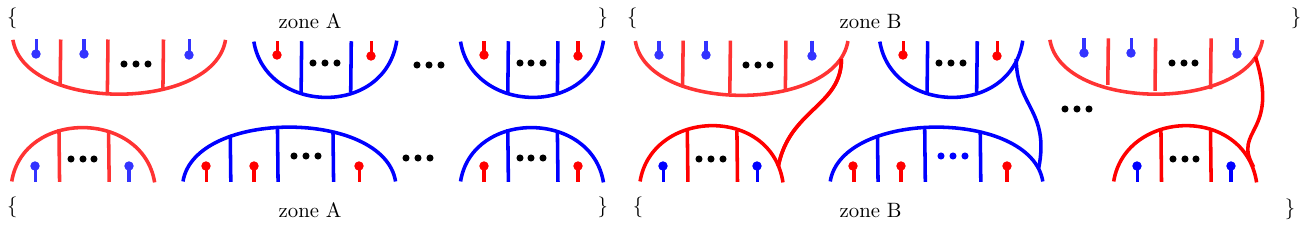}}
\end{equation}

\medskip

Finally, to conclude the proof of $  {A}^{\prime}_w  \supseteq {A}_w $, we observe that the process, 
illustrated in \eqref{insertionStepFirst},  
of replacing a degenerate non-hanging full birdcage by a non-hanging
non-degenerate full birdcage 
can be realized as the multiplication on top or bottom with a diagram
of the form {\eqref{CsoergelABC}}. Below we give an example. 
\begin{equation}\label{belowwegive}
  \raisebox{-.5\height}{\includegraphics[scale=0.8]{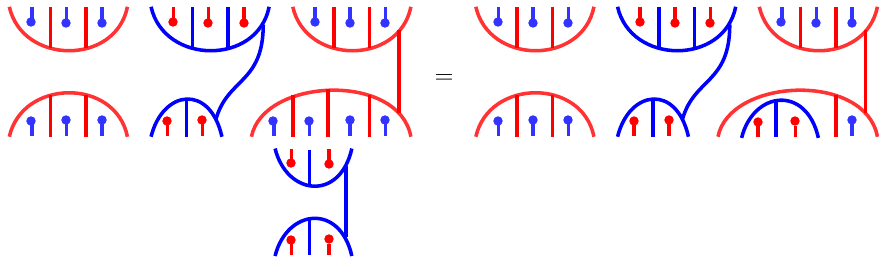}}
    \raisebox{-2\height}{ {\color{black}{.}}}
\end{equation}

\medskip
In order to prove the other inclusion $  {A}^{\prime}_w  \subseteq {A}_w $
we need to change the argument of the corresponding statement in \cite{LPR}
since it, {\color{black}{as already mentioned}}, depends on dimension arguments in linear algebra (over $ \CC$). 

We first observe that
$ U_i \in {A}_w $ for $ i =0,1,\ldots, n-2 $.
{\color{black}{Hence, to show $  {A}^{\prime}_w  \subseteq {A}_w $}}
it is enough to verify that $ {A}_w $ is invariant 
under left and right multiplication by the $ U_i$'s.

{\color{black}{Let therefore $ D$ be a diagram for $  {A}_w $.
    We first choose $ i > 0 $ and proceed to give a description of
the effect of
multiplying $ U_i   $ below on $ D $, that is we describe $ U_i  D$ in terms of $ D$.
Let $ l_1,  l_2 $ and $ l_3 $ be the arcs in $ D $ that has bottom boundary points $ i ,  i+1 $
and $ i+2$ of $ D$, respectively. Without loss of generality we may assume
that $ l_1 $ and $ l_3 $ are red,
and that $ l_2 $ is blue. {\color{black}{Recall}} that an arc $ l $ of $ D $ is a boundary dot arc if
one of its vertices
is a dot and the other one is a boundary point.

\medskip
\noindent
    {\bf Case 1:} This is the case where $ l_1 $ and $ l_2 $ have a common vertex in $D$,
    or, more generally, that $ l_1 $ and $ l_2 $ are connected in $ D$. We then have that 
    $ l_2 $ is a boundary dot arc. 
In now follows from the isotopy relations
\eqref{33} and the relation $ U_i^2 = -2 U_i $, see Theorem
\ref{fundamental}, 
that $ U_i D = -2 D$,
since relation \eqref{sgraphD} implies that the scalar $ -2 $ moves to the left of $ D$. 
Hence we get that 
$ U_i D \in A_w$. 
Here is an illustration, where we for simplicity leave out the top
part of the diagrams. 
\begin{equation}
  \raisebox{-.5\height}{\includegraphics[scale=0.8]{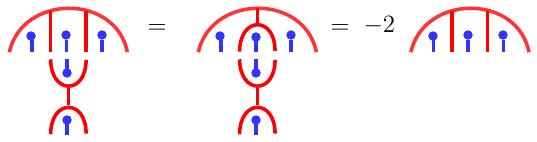}}
    \raisebox{7\height}{ {\color{black}{.}}}
\end{equation}

\medskip
\noindent
{\bf Case 2:} 
This is the case where $ l_2 $ is a boundary dot arc, whereas $ l_1 $ and $ l_3 $ are the rightmost and leftmost
arcs of birdcagecages, respectively.
Note that in this situation the bottom boundary points $ i  $ and $ i+1 $ belong to zone A
of $ D$. 
We here get that $ U_i D = \alpha_{\te} D_1 $ where
$ D_1 $ is the diagram obtained from $ D$ by concatenating horizontally the two birdcagecages, with
an extra dot in the middle. Since $ D_1 \in A_w $, this case is also OK. 
Here is an illustration, once again without the top parts of the diagrams. 
\begin{equation}
  \raisebox{-.5\height}{\includegraphics[scale=0.8]{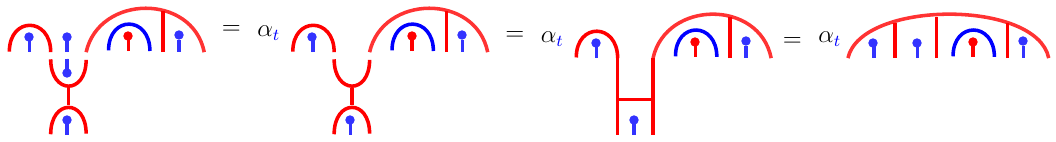}}
    \raisebox{5\height}{ {\color{black}{.}}}
\end{equation}

\medskip
\noindent
    {\bf Case 3:} This is the case where $ l_2 $ has a trivalent vertex, whereas
    $ l_1 $ and $ l_3 $ are the rightmost and leftmost arcs of birdcagecages, respectively.
    In this case we get $ U_i D =  D_1 $ where $ D_1 $ is obtained from $ D $ by eliminating
    $ l_2 $ and joining the birdcagecages involving $ l_1 $ and $ l_3 $, with an extra dot in the middle.
    Once again,
    we get that $ D_1 \in A_w  $, and so this case is also OK. Here is an illustration. 
\begin{equation}
  \raisebox{-.5\height}{\includegraphics[scale=0.8]{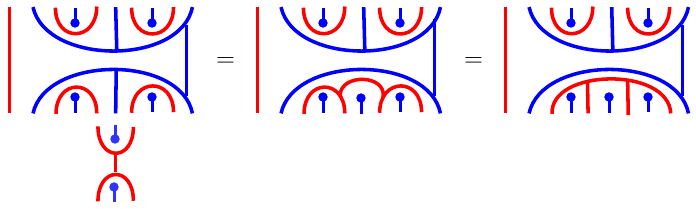}}
    \raisebox{-2\height}{ {\color{black}{.}}}
\end{equation}

\medskip
\noindent
    {\bf Case 4:} This is the case where $ l_1 $ is the leftmost arc of a birdcagecage, whereas $ l_2 $
    has a trivalent vertex $V$. Let $ B_1 $ be the birdcagecage that lies to the southwest of $ V$ and
    let $ B_2 $ be the birdcagecage that lies to the southeast of $V$. Then $ U_i D = D_1 $ where
    $ D_1 $ is obtained from $ D$ by joining $ B_1 $ and $ l_1 $ and adding a boundary dot arc to the
    left of $ B_1 $. We have that $ D_1 \in A_w $ and so this case is done. Here is an example, without
    the top parts of the diagrams. 
\begin{equation}
  \raisebox{-.5\height}{\includegraphics[scale=0.8]{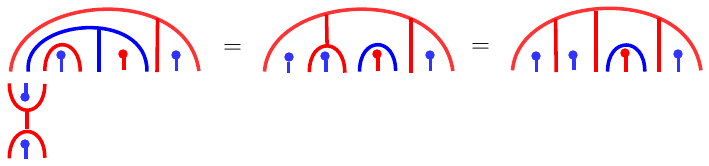}}
      \raisebox{3.5\height}{ {\color{black}{.}}}
\end{equation}

\medskip
\noindent
    {\bf Case 5:} This is the identical to case 4 except that both vertices of $l_2 $ are supposed
    to be boundary points. Let $ B $ be the birdcagecage that lies below $ l_2 $. Then
    $ U_i D = D_1 $ where $ D_1 $ is obtained from $ D $ by splitting $ l_2 $ in two dot boundary arcs,
    and $ B $ is joined with $l_1$. Since $ D_1 \in A_w $ we are done in this case, as well. Here is an example. 
\begin{equation}
  \raisebox{-.5\height}{\includegraphics[scale=0.8]{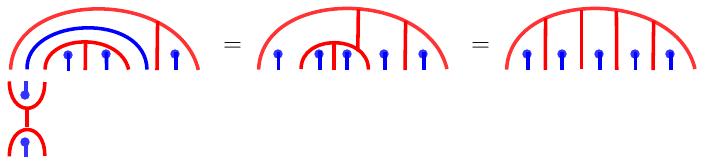}}
        \raisebox{3.5\height}{ {\color{black}{.}}}
\end{equation}

\medskip
\noindent
    {\bf Case 6:} This is the case where
a vertex of 
    $ l_2 $  is a top boundary point of $ D$. Then
    $ l_1 $ and $ l_2 $ are the rightmost and leftmost arcs of two birdcagecages $ B_1 $ and $B_2 $.
    We get that $ U_i D = D_1 $ where $ D_1 $ is obtained from $ D $ by joining $ B_1 $ and $ B_2 $
    and splitting $ l_2 $ in two dot boundary arcs. Since $ D_1 \in A_w $ we have that this case is OK as well. 
    Here is an example. 
\begin{equation}
  \raisebox{-.5\height}{\includegraphics[scale=0.8]{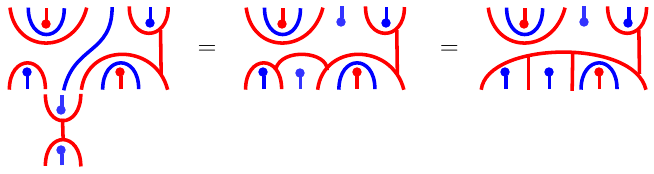}}
        \raisebox{-2\height}{ {\color{black}{.}}}
\end{equation}

\medskip
\noindent
    {\bf Case 7:} This is the case where $ l_2 $
is the leftmost arc of a hanging birdcagecage. 
This case resembles case 6.
We have that 
$ l_1 $ and $ l_2 $ are the rightmost and leftmost arcs of two birdcagecages $ B_1 $ and $B_2 $.
Then we get that $ U_i D = D_1 $
where $ D_1 $ is obtained from $ D $ by joining $ B_1 $ and $ B_2 $
and replacing $ l_2 $ by a bottom dot boundary arc. We have $ D_1 \in A_w $
and so this case is OK as well. Here is an example. 
\begin{equation}
  \raisebox{-.5\height}{\includegraphics[scale=0.8]{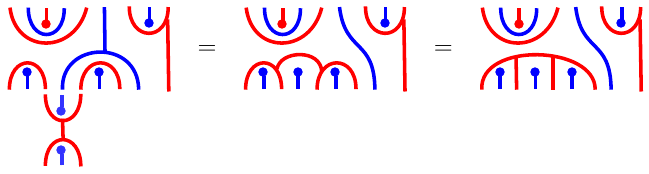}}
      \raisebox{-2\height}{ {\color{black}{.}}}
\end{equation}  

\medskip
There are a few remaining cases to consider, but they are all small variations of the cases already studied 
and so we leave them to the reader.

\medskip
We next consider $ i =0 $. Let $ B $ be the leftmost bottom birdcagecage of $ D $ and
let $ l $ be its leftmost bottom arc. If $ D $ is a boundary dot arc, we have $ U_0 D = 
\alpha_{\ese} D \in A_w  $. Otherwise
we have $ U_0 D =  \alpha_{\ese} D_1  $ where $ D_1 $ is
the birdcagecage obtained from $ D $ by replacing $ l $ by a boundary dot arc, and here we also have 
$ U_0 D \in A_w $. Here is an illustration of this case. 
\begin{equation}
  \raisebox{-.5\height}{\includegraphics[scale=0.8]{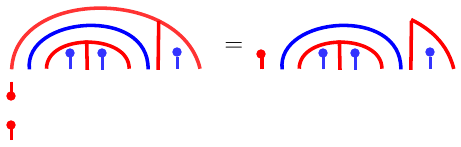}}
      \raisebox{1\height}{ {\color{black}{.}}}  
\end{equation}

\medskip
Finally, we observe that the 
description of the right multiplication $ D U_i $ is completely analogous to the
description of $ U_i D  $, and so we have concluded the proof of 
${\bf b)}$. }}

\medskip
{\color{black}{We now give an alternative proof of $  {A}^{\prime}_w  \subseteq {A}_w $, 
adapting the proof in \cite{LPR} and using one of the main results in \cite{EW}. 
Let $ Q $ be the quotient field of $ R $ and let $ Q{A}_w := Q \otimes_R {A}_w $, 
$ Q{A}^{\prime}_w := Q \otimes_R {A}^{\prime}_w $ and
$ Q \BRnn = Q \otimes_R  \BRnn$. 
Since $ A_w $ and $ \BRnn $ are torsion free
$R$-modules, in fact even free, 
we may view $ A_w $ and $ \BRnn$ as $R$-submodules
of $ Q{A}_w $ and $ Q\BRnn$, via the map $ D \mapsto 1 \otimes_R D $. 
Similarly, we may view $ A^{\prime}_w $ as an $R$-submodule
of $ Q{A}^{\prime}_w $, since $ {A}^{\prime}_w $ is torsion free, being a 
submodule of the free $ R$-module $  \tilde{A}_w $.

\medskip
Now the inclusion $  {A}_w  \subseteq {A}^{\prime}_w $
induces an inclusion $  Q {A}_w \subseteq Q {A}^{\prime}_w $, since $ Q \otimes_R ( \cdot) $
is an exact functor, and the surjection $ \varphi: \BRnn \twoheadrightarrow  A^{\prime}_w $
induces a surjection $  Q \BRnn \twoheadrightarrow  Q A^{\prime}_w $. Combining this with 
${\bf a)}$, we deduce that $  Q {A}_w = Q {A}^{\prime}_w $ since both 
are $Q$-vector spaces of the same dimension $\binom{2n}{n}$. 

\medskip
Let us now show $  {A}^{\prime}_w  \subseteq {A}_w $. As in the first proof
of $  {A}^{\prime}_w  \subseteq {A}_w $, it is for this 
enough to check
that $ U_i D \in  {A}_w $, whenever $ D $ is a light leaves diagram in $  {A}_w $.
Now using $  Q {A}_w = Q {A}^{\prime}_w $, we find elements $ q_k \in Q$ such that 
\begin{equation}\label{compara1}
U_i D = \sum_{ D_k \in A_w} q_k D_k  
\end{equation}
where $ D_k $ runs over the light leaves basis for $ A_w$. On the other hand,
as was shown in Theorem 6.11 of \cite{EW}, the light leaves diagrams $ \{ D_l \} $ for $   \tilde{A}_w $ form an $ R $-basis
for $   \tilde{A}_w $, and so there also exist 
$ r_l \in R $ such that 
\begin{equation}\label{compara2}
 U_i D = \sum_{ D_l  \in   \tilde{A}_w } a_l  D_l .
\end{equation}
Comparing \eqref{compara1} and \eqref{compara1} we deduce that $ q_k \in R $, and so $ U_i D \in A_w $,
as claimed. 

\medskip
We remark that, in a suitable sense, the alternative
proof of ${\bf b)}$ is equivalent to the first proof, since the arguments 
in \cite{EW}, that hold in the setting of a general Coxeter {\color{black}{system}} $(W,S) $, depend on a case-by-case similar to the one carried out in our first proof.}}

\medskip
To prove ${\bf c)}$ we argue essentially as in
\cite{LPR},
although a little extra care has to be exercised since the algebras are defined over a
commutative ring rather than a field. But by \eqref{rankblob}, ${\bf a)}$ and ${\bf b)}$
  we have that $ \varphi $ is a surjective homomorphism between free $R$-modules of the same
  finite {\color{black}{rank}}.
On the other hand, $ R $ is a commutative ring with 1 and 
so indeed $ \varphi $ is an isomorphism, 
as follows from Vasconcelos' Theorem, see \cite{Vasconcelos}.
The proof of the Theorem is finished. 
\end{dem}

\medskip

Suppose that $ \underline{w}= s_{i_1} s_{i_2} \cdots s_{i_n} $. Then
we define $ {\Lambda}_{w} \subseteq \tilde{\Lambda}_{w}  $
as the subset of all 'tails' of $ \tilde{\Lambda}_{w} $, that is 
\begin{equation}\label{tail}
  {\Lambda}_{w} = \{ v \in \tilde{\Lambda}_{w} | v = s_{i_k} s_{i_{k+1}} \cdots s_{i_n} \mbox{ for } k =1,
  \ldots, n\} {\color{black}{.}}
\end{equation}
Note that $ w \in   {\Lambda}_{w} $ but $ 1 \not\in   {\Lambda}_{w} $. 
For example for $ n = 7 $ we have that $ w = {\ese} {\te}{\ese} {\te}{\ese} {\te}{\ese} $ and so 
\begin{equation}   {\Lambda}_{w} = \{ {\ese} {\te}{\ese} {\te}{\ese} {\te}{\ese},
{\te}{\ese} {\te}{\ese} {\te}{\ese}, {\ese} {\te}{\ese} {\te}{\ese},  {\te}{\ese} {\te}{\ese},
{\ese} {\te}{\ese},  {\te}{\ese},  {\ese} \} {\color{black}{.}}
\end{equation}
Let $ {\Lambda}^c_{w} \subseteq \tilde{\Lambda}_{w}  $
be the subset obtained from $ {\Lambda}_{w} $ by deleting the last generator
of each element of $ \tilde{\Lambda}_{w} $. Keeping the example $ n=7 $ we have 
$   {\Lambda}^c_{w} =
\{ {\ese} {\te}{\ese} {\te}{\ese} {\te},
{\te}{\ese} {\te}{\ese} {\te}, {\ese} {\te}{\ese} {\te},  {\te}{\ese} {\te},
{\ese} {\te},  {\te},  1 \} $. With this definition we have 
\begin{equation}\label{taildec}
\tilde{\Lambda}_{w} = {\Lambda}_{w} \, \dot\cup \,{\Lambda}^c_{w} {\color{black}{.}}
\end{equation}
The relevance of ${\Lambda}_{w} $ 
comes from the following Theorem.
\begin{theorem}\label{secondcellular}
  The triple $ ( \Lambda_{w}, {\tab}_w(v), C) $ defines a cellular basis structure on $ {A}_w$.
\end{theorem}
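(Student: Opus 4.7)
The plan is to transfer the cellular structure of $\tilde{A}_w$ from Theorem \ref{thenwehavethe} down to $ A_w $, using that $ A_w $ is already known to be an $R$-subalgebra of $\tilde{A}_w$ by Theorem \ref{mainTheoremSection3new}(b). The anti-involution $\ast$ on $\tilde{A}_w$ is geometrically realised as reflection through a horizontal axis, and this manifestly preserves the ``empty zone C'' condition appearing in Definition \ref{defAw}, so it restricts to $ A_w $ and sends $ C^v_{D_1,D_2} $ to $ C^v_{D_2,D_1} $. The remaining work splits into two steps, an identification of bases and a multiplication check.

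First I would reconcile the two indexing sets. In a light leaf drawn as in \eqref{birdcagecagezonesA}, the hanging birdcages of zone B each contribute one propagating line to the top boundary, and their colours read from left to right form a reduced expression of the associated element $ v \in W $. The birdcages of zones A, B, C cover pairwise disjoint consecutive blocks of the bottom boundary $ \underline{w} $, with zone C (when non-empty) occupying the rightmost such block. Hence zone C is empty exactly when the hanging birdcages of zone B reach the right edge of $ \underline{w} $, which happens exactly when the reduced expression of $ v $ is a suffix of $ \underline{w} $, i.e.\ $ v \in \Lambda_w $. Combined with Theorem \ref{thenwehavethe} and Definition \ref{defAw}, this would give that
\[
\{ C^v_{D_1,D_2} \, | \, v \in \Lambda_w,\ D_1,D_2 \in {\tab}_w(v) \}
\]
is an $ R $-basis of $ A_w $.

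Second I would deduce the cellular multiplication property. Given $ v \in \Lambda_w $, $ D_1, D_2 \in {\tab}_w(v) $ and $ a \in A_w $, the cellular axiom for $\tilde{A}_w$ yields
\[
a\, C^v_{D_1,D_2} \, = \, \sum_{u \in {\tab}_w(v)} r_{u D_1 a}\, C^v_{u,D_2} \, + \, X
\]
with $ X $ in the $ R $-span of $\{ C^\mu_{p,q} \, | \, \mu \in \tilde{\Lambda}_w,\ \mu < v \}$. The left-hand side lies in $ A_w $ by Theorem \ref{mainTheoremSection3new}(b), and so does the first sum on the right since $ v \in \Lambda_w $. Consequently $ X $ belongs to $ A_w \cap \tilde{A}_w^{<v} $, which by the previous step coincides with the $ R $-span of $\{ C^\mu_{p,q} \, | \, \mu \in \Lambda_w,\ \mu < v \} $. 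This is exactly the required ideal $ A_w^{<v} $, so the cellular relation descends into $ A_w $.

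The main obstacle is the geometric identification of the first step; once it is done, the rest is formal. Verifying that emptiness of zone C matches the suffix condition $ v \in \Lambda_w $ requires a careful inspection of the non-recursive light leaves basis for the infinite dihedral group as described in \cite{LPR}, and this is where I would expect essentially all of the effort to be concentrated.
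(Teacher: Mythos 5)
Your proposal is correct and follows essentially the same route as the paper: the paper's proof likewise invokes that $A_w$ is a subalgebra of $\tilde{A}_w$ (Theorem \ref{mainTheoremSection3new}\,{\bf b)}), the cellular structure of Theorem \ref{thenwehavethe}, and the key fact that a double leaf $C^v_{D_1,D_2}$ lies in $A_w$ (equivalently, has empty zone C) if and only if $v\in\Lambda_w$, declaring the rest immediate. You merely make explicit what the paper leaves implicit --- the suffix characterization of empty zone C, the identification $A_w\cap\tilde{A}_w^{<v}=A_w^{<v}$ for the lower-terms condition, and the restriction of $\ast$ --- which is consistent with, not divergent from, the paper's argument.
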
  
\begin{dem}
By part ${\bf b})$ of Theorem \ref{mainTheoremSection3new} we know that 
$ A_w $ is a subalgebra of $ \tilde{A}_{w} $ and so the Theorem 
follows immediately from Theorem \ref{secondcellular} and
the fact that 
for $ D_1 , D_2 \in {\tab}_w(v ) $
and $ v \in \tilde{\Lambda}_{w}$ 
we have that $ C_{D_1 D_2 }^v $ belongs to $ A_w $ {\color{black}{if and only if}} 
$ v \in {\Lambda}_{w}$.
\end{dem}  

\medskip
By Theorem {\ref{mainTheoremSection3new}} we know that $ \BRnn \cong A_w $
and from Theorem \ref{firstcellular2} and Theorem \ref{secondcellular} we know that
both  
algebras are cellular. It now seems plausible that the corresponding 
cell modules are isomorphic as well. This is however not automatic since
the cellular structure on a cellular algebra is not unique. Our next aim is to show that the 
cell modules are indeed isomorphic.

\medskip
For this we first need to establish a poset
isomorphism $ \psi: \Lambda_{w}  \cong   \Lambda_{  {\color{black}{\pm }  ( n-1)}}  $. 
The sets $   \Lambda_{w} $ and $ \Lambda_{  {\color{black}{\pm }  ( n-1)}}  $
are both of cardinality $ n $
and the respective order relations are both total, so there is in fact a unique choice of $ \psi $.
It is given by the following Lemma, where $ l(\cdot) $ is the usual Coxeter length function on $ W$. 
\begin{lemma}\phantomsection\label{usualLength}
  Let $ \psi: \Lambda_{w}  \rightarrow   \Lambda_{{\color{black}{ \pm (n-1)}}}   $ be the map defined by 
  \begin{equation}
\psi(v) = \left\{ \begin{array}{ll} l(v)-1 &  {\rm  if }\, \, \,  v  \, \,  \mbox{begins with}\, \,  \ese   \\
  -l(v) & { \rm  if } \, \, \,  v  \, \,  \mbox{begins with}\, \,  \te {\color{black}{.}}
\end{array} \right.
  \end{equation}
  Then $ \psi $ is an isomorphism of posets.
{\color{black}{We denote by $ \varphi $ the inverse $ \varphi =  \psi^{-1}: \Lambda_{ \pm (n-1)}
\rightarrow  \Lambda_{w} $.}}
 \end{lemma}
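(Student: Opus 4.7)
My plan is to establish the lemma in three small steps, organized around the fact that both posets are totally ordered of the same cardinality $n$.

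First I would describe the Bruhat order on $\Lambda_w$ explicitly. The tails of $w$ have lengths $1, 2, \ldots, n$, one for each length. In the infinite dihedral group every element has a unique reduced expression, and if $u \in W$ has length $k$ and $v \in W$ has length $k+1$, then $u$ is a subword of $v$: indeed, if $v$ starts with the same letter as $u$, then $u$ is obtained from $v$ by deleting the last letter, and otherwise $u$ is obtained from $v$ by deleting the first letter. By induction, any element of length $k$ is Bruhat-below any element of length $\ell > k$. Hence the restriction of Bruhat order to $\Lambda_w$ is the total order $v_1 < v_2 \iff l(v_1) < l(v_2)$.

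Second I would check that $\psi$ lands in $\Lambda_{\pm(n-1)}$ and is a bijection. The first letters of the tails alternate with their length, so exactly $\lceil n/2 \rceil$ tails begin with the same letter as $w$ and the rest begin with the other. The tails beginning with $\ese$ produce the non-negative values $\{l(v)-1\}$, while those beginning with $\te$ produce the negative values $\{-l(v)\}$. A short case analysis on the parity of $n$ (using that if $n$ is odd then the length-$k$ tail begins with $\ese$ exactly when $k$ is odd, and the opposite when $n$ is even) shows that these values enumerate precisely $\Lambda_{\pm(n-1)}$.

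Third I would verify that $\psi$ is order-preserving, which since both sides are totally ordered of cardinality $n$ reduces to showing that consecutive-length tails map to consecutive elements of $\Lambda_{\pm(n-1)}$. Let $v_k, v_{k+1} \in \Lambda_w$ be the tails of lengths $k$ and $k+1$; they have opposite first letters. There are two cases:
\begin{itemize}
\item If $v_k$ begins with $\ese$ and $v_{k+1}$ begins with $\te$, then $\psi(v_k) = k-1$ and $\psi(v_{k+1}) = -(k+1)$. Since $|k-1| < |{-(k+1)}|$, we have $\psi(v_k) < \psi(v_{k+1})$ in the poset order on $\Lambda_{\pm(n-1)}$, and these are consecutive in that order.
\item If $v_k$ begins with $\te$ and $v_{k+1}$ begins with $\ese$, then $\psi(v_k) = -k$ and $\psi(v_{k+1}) = k$. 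These have the same absolute value, so the poset order reduces to the integer order, giving $\psi(v_k) < \psi(v_{k+1})$, and they are again consecutive.
\end{itemize}
This finishes the proof. There is no real obstacle here; the only point requiring minor care is handling the parity of $n$ uniformly in the bijectivity step, which is why I would isolate that step from the order-preservation argument.
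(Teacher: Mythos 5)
Your proof is correct and takes essentially the same route as the paper, which disposes of the lemma by simply remarking that $\Lambda_w$ and the target are both totally ordered sets of cardinality $n$, so the poset isomorphism is unique and is given by the stated formula; your three steps merely supply the routine verifications (Bruhat order on tails is ordered by length, $\psi$ is a bijection onto $\Lambda_{\pm(n-1)}$, and consecutive tails map in the right order). Your reading of the target as $\Lambda_{\pm(n-1)}$ with the order $0<-2<2<-4<4<\cdots$ is the intended one.
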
  
(The different meanings of the symbol $ \varphi$, for example as
the algebra isomorphism $ \BRn \cong A_w $, but also as the poset isomorphism
$ \Lambda_{{ \color{black}{\pm ( n-1)}}} \cong \Lambda_{w} $, should not give rise to confusion).
For example, if $ n = 7 $ we have
\begin{equation}
\varphi(6,-6,4,-4,2,-2,0) =   
(  {\ese} {\te}{\ese} {\te}{\ese} {\te}{\ese},
{\te}{\ese} {\te}{\ese} {\te}{\ese}, {\ese} {\te}{\ese} {\te}{\ese},  {\te}{\ese} {\te}{\ese},
{\ese} {\te}{\ese},  {\te}{\ese},  {\ese} ) {\color{black}{.}}
\end{equation}  

For $ v \in \Lambda_w $ 
we now introduce 
$ C_{v} \in A_w$ as the double leaf diagram of the form {\eqref{Csoergel}} or
\eqref{CsoergelAB} that defines $v \in W$, that is

\begin{equation}{\label{CClambday}} 
  C_{v} = \left\{
  \begin{array}{l} 
    \raisebox{-.40\height}{\includegraphics[scale=0.7]{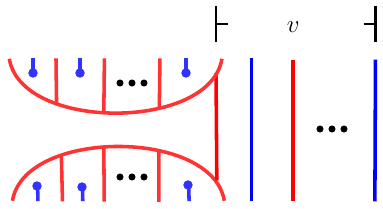}} 
       \\        \\ \\
    \raisebox{-.4\height}{\includegraphics[scale=0.7]{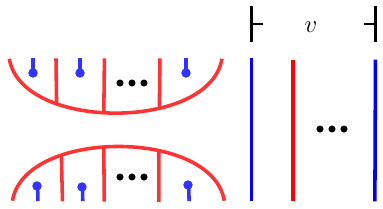}} 
  \end{array}
  \right.
\end{equation}
where 
the vertical lines below $ \raisebox{-.4\height}{\includegraphics[scale=0.7]{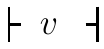}} $ in
each diagram of 
{\eqref{CClambday}} define $ v $. 

We have that $ C_v = \varphi(C_\lambda) $ where $ \varphi(\lambda) = v$, that is 
$  C_v $ corresponds to the blob algebra element $ C_{\lambda} $ defined in {\eqref{tlambda2}}.
Let $  A^{v}  $ and $ A^{ < v} $ be the cell ideals in $ A_w$ given by 
\begin{equation}
  A^{v} := \{ a C_{v } b \, | \, a,b \in A_w \}  \mbox{ and }  A^{< v} :=
\{ a C_{u} b \, | \, a,b \in A_w  \mbox{ and } u < v \} 
\end{equation}
and define the cell module
\begin{equation}
\Delta_w(v) := A_w \overline{C}_{v} \subset A^{ v}/A^{< v}
\end{equation}
where $ \overline{C}_{v} = {C}_{v} +A^{< v}$. 
Let $ D_v $ be the half-diagram corresponding to $ C_v $. 
Then we have the following Theorem. 
\begin{theorem}
$ \Delta_w(v) $ is free over $ R $ with basis
$ \{ \overline{C}_{D, D_{v}}\, | \, D \in {\tab}_w(v) \} $
where $ \overline{C}_{D, D_{v}} := {C}_{D, D_{v}} +
A^{< v} $. 
\end{theorem}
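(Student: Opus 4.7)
The plan is to exploit the cellular basis structure on $A_w$ from Theorem \ref{secondcellular}. Since $\{\overline{C}^v_{D_1, D_2} : D_1, D_2 \in \tab_w(v)\}$ is a free $R$-basis for the quotient $A^v/A^{<v}$, the proposed subset $\{\overline{C}_{D, D_v} : D \in \tab_w(v)\}$ is automatically linearly independent over $R$, and only the spanning property inside $\Delta_w(v)$ remains to be checked.

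The containment $\Delta_w(v) = A_w \overline{C}_v \subseteq \spa\{\overline{C}_{D, D_v}\}$ is an immediate consequence of the cellular axiom from Definition \ref{defcellularalgebra}. Indeed, for $a \in A_w$ we have
\begin{equation*}
a \cdot C^v_{D_v, D_v} \equiv \sum_{D \in \tab_w(v)} r_{D, D_v, a}\, C^v_{D, D_v} \pmod{A^{<v}}.
\end{equation*}
The reverse containment, namely that each $\overline{C}_{D, D_v}$ genuinely lies in $A_w \overline{C}_v$, is the main obstacle. For this I would construct, for each light leaf $D \in \tab_w(v)$, an explicit element $a_D \in A_w$ built from the generators $U_0, U_1, \ldots, U_{n-2}$, such that $a_D \overline{C}_v = \overline{C}_{D, D_v}$ up to a nonzero scalar. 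The idea is that $D$ differs from $D_v$ only by the arrangement of certain birdcages in the non-propagating part of the diagram, and one uses the diagrammatic action of the $U_i$'s on birdcages, analyzed in the proof of Theorem \ref{mainTheoremSection3new}${\bf b)}$, to successively reshape $D_v$ into $D$. Any correction terms produced along the way correspond to cellular basis elements with parameter $v' < v$ in the Bruhat order and hence vanish modulo $A^{<v}$.

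Concretely, this amounts to running the Soergel-diagrammatic analogue of the algorithm used in the proof of Theorem 2.5 of \cite{LPR}, which is precisely what is invoked in the parallel Theorem \ref{cellblob} for $\BRnn$. Having established spanning and linear independence, freeness of $\Delta_w(v)$ as an $R$-module with the claimed basis follows at once, completing the proof.
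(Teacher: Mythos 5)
Your proposal is correct and follows essentially the same route as the paper: the paper's proof consists precisely of invoking the algorithm from the proof of Theorem \ref{mainTheoremSection3new} (the Soergel-diagrammatic analogue of the \cite{LPR} algorithm cited for Theorem \ref{cellblob}), which is the content of your third paragraph, while linear independence and the easy spanning inclusion are left implicit there but handled by you via the cellular structure exactly as intended. The only point to keep in mind when executing the reshaping step is that the scalar relating $a_D\overline{C}_v$ to $\overline{C}_{D,D_v}$ must be invertible in $R=\comu[x,y]$, i.e.\ a nonzero complex number, which is indeed what the diagrammatic manipulations produce.
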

\begin{dem}
This follows from the algorithm given in the proof of Theorem 
{\ref{mainTheoremSection3new}}.
\end{dem}  

\medskip
We then finally obtain the main result of this section.

\begin{theorem}\label{finallymodules}
  Suppose that $ \varphi(\lambda) = v $.
Then the isomorphism $ \varphi: \BRnn \rightarrow  {A}_w$ induces an isomorphism 
$  \varphi: \Delta^{\Blob}_{n-1}(\lambda) \rightarrow \Delta_w(v)$
of cell modules.  
\end{theorem}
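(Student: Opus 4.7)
The plan is to show that the algebra isomorphism $\varphi : \BRnn \to A_w$ of Theorem \ref{mainTheoremSection3new} respects the two cellular structures closely enough to descend to the cell modules. Concretely, I want to verify (i) that $\varphi(C_\lambda) = C_v$, and (ii) that $\varphi$ carries the blob cell ideal below $\lambda$ isomorphically onto the Soergel cell ideal below $v$. Given these, the induced map on quotients restricts to the required isomorphism of cell modules.

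For (i) I would inspect the formulas directly. If $\lambda \ge 0$, the element $C_\lambda = \mathbb{U}_1 \mathbb{U}_3 \cdots \mathbb{U}_{2k-1}$ of \eqref{tlambda2} is sent by $\varphi$ to $U_1 U_3 \cdots U_{2k-1}$, which by \eqref{Csoergel} is the non-hanging diagram in \eqref{CClambday}. If $\lambda < 0$, an analogous calculation using \eqref{CsoergelAB} produces the hanging diagram in \eqref{CClambday}. Lemma \ref{usualLength} ensures that the sign of $\lambda$ matches exactly whether $v = \varphi(\lambda)$ begins with $\ese$ or with $\te$, so the two cases line up consistently and indeed $\varphi(C_\lambda) = C_v$.

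For (ii), the poset isomorphism $\psi$ from Lemma \ref{usualLength} guarantees that for every $\mu \in \Lambda_{n-1}$ with $\mu < \lambda$ the image $\varphi(\mu) \in \Lambda_w$ satisfies $\varphi(\mu) < v$. Applying (i) with $\mu$ in place of $\lambda$ shows that $\varphi(C_\mu) = C_{\varphi(\mu)}$ lies in the Soergel cell ideal below $v$. It follows that $\varphi$ maps the two-sided ideal $\{aC_\mu b \mid a,b \in \BRnn,\, \mu < \lambda\}$ into the Soergel cell ideal below $v$, and similarly that it maps the cell ideal at $\lambda$ into the cell ideal at $v$; applying the same reasoning to $\varphi^{-1}$ upgrades both containments to equalities. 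Consequently $\varphi$ descends to an isomorphism of quotients sending $\overline{C}_\lambda \mapsto \overline{C}_v$, which restricts to the claimed isomorphism $\Delta^{\Blob}_{n-1}(\lambda) = \BRnn \, \overline{C}_\lambda \xrightarrow{\sim} A_w \, \overline{C}_v = \Delta_w(v)$.

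The only genuinely delicate step is the sign-of-$\lambda$ case distinction in (i), which is precisely what Lemma \ref{usualLength} is set up to encode; once that observation is in place, the rest of the argument is a formal manipulation of cell ideals.
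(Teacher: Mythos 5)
Your proposal is correct and follows essentially the same route as the paper: the paper establishes $\varphi(C_\lambda)=C_v$ via \eqref{Csoergel} and \eqref{CsoergelAB} just before the theorem, and its proof is exactly your step (ii) --- $\varphi$ induces isomorphisms $A^{\lambda}\cong A^{v}$ and $A^{<\lambda}\cong A^{<v}$ (the poset compatibility of Lemma \ref{usualLength} being left implicit), hence an isomorphism of the quotients carrying $\overline{C}_\lambda$ to $\overline{C}_v$ and thus of the cell modules. You have merely spelled out details the paper leaves to the reader.
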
  
\begin{dem}
  Since $ \varphi(C_\lambda) = C_v $ we have that $ \varphi $
  induces ideal isomorphisms $ A^{\lambda} \cong A^v $ and $ A^{< \lambda} \cong A^{< v}$ and 
  hence $ A^{\lambda} /A^{< \lambda} \cong A^v / A^{< v} $. But from this we deduce
  that $ \Delta_w(v) $ and $ \Delta^{\Blob}_{n-1}(\lambda) $ are isomorphic under $ \varphi$, as claimed.
\end{dem}

\medskip
For a general cellular algebra $ \cal A$ over $ \Bbbk     $ with cell modules
$ \{ \Delta(\lambda) \, | \, \lambda \in \Lambda \} $ there is a canonical bilinear 
form $ \langle \cdot, \cdot
\rangle_{\lambda} $ on $ \Delta(\lambda) $ that plays an important role for the representation
theory of $ \cal A$. Let $ \Lambda_0 = \{\lambda \in \Lambda \, | \,
\langle \cdot, \cdot \rangle_{\lambda}\neq 0 \} $ and define for $ \lambda \in \Lambda_0 $ the
$ \cal A$-module $ L(\lambda) :=  {\color{black}{\Delta}}(\lambda) / {\rm rad } \langle \cdot, \cdot \rangle_{\lambda} $
where $ {\rm rad } $ is the radical of $ \langle \cdot, \cdot \rangle_{\lambda} $ in the usual sense
of bilinear forms. It is an $ \cal A$-submodule of $ \Delta(\lambda) $ because
$ \langle \cdot, \cdot \rangle_{\lambda} $ is $ \cal A $-invariant, that is
$ \langle xy, z \rangle_{\lambda} = \langle x, y^{\ast} z \rangle_{\lambda} $
for all $ x \in \cal A $ and $ y, z \in  \Delta(\lambda) $,
{\color{black}{where $ \ast$ is the antihomomorphism of $\cal A $ given in
    Definition \ref{defcellularalgebra}.}}
In the case where $ \Bbbk $ is a field, the $ \cal A $-modules in the set 
$ \{ L(\lambda) \, | \, \lambda \in \Lambda_0 \} $ are all irreducible,
and each $ \cal A $-irreducible module occurs exactly once in the set,
{\color{black}{see Theorem 3.4 in \cite{GL}}}. 

\medskip
Let $ \langle \cdot, \cdot \rangle^w_{n, v} $ be the bilinear form on $ \Delta_{w}(v) $ and
let $ \langle \cdot, \cdot \rangle^{\B}_{n-1, \lambda} $ be the bilinear form on $ \Delta^{\Blob}_{n-1}(\lambda) $. 
Then we have the following Theorem.  
\begin{theorem}\phantomsection\label{equivalentforms}
  $ \langle \cdot, \cdot \rangle^{\B}_{n-1, \lambda} $ and $ \langle \cdot, \cdot \rangle^w_{n, v} $
  are equivalent under
  $ \varphi $, in other words
\begin{equation}
  \langle \varphi(s), \varphi(t) \rangle^w_{n, v} =
   \langle s, t \rangle^{\B}_{n-1, \lambda} \, \, \, \mbox{for } s,t \in \Delta^{\Blob}_{n-1}(\lambda) 
  \end{equation}
where $ \varphi(\lambda) = v$.
\end{theorem}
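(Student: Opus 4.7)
The plan is to transport the defining multiplicative characterization of the cellular bilinear form across $\varphi$. Recall that for a cellular algebra the form $\langle \cdot, \cdot \rangle_\lambda$ on a cell module $\Delta(\lambda)$ is characterized as the unique $R$-bilinear pairing satisfying
\begin{equation}\label{equivfchar}
C_\lambda \, a^* b \, C_\lambda \equiv \langle \overline{a C_\lambda}, \overline{b C_\lambda} \rangle_\lambda \cdot C_\lambda \pmod{A^{<\lambda}}
\end{equation}
for all algebra elements $a, b$, where $*$ denotes the cellular anti-involution. My strategy consists of three steps.

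First, I would verify that $\varphi$ intertwines the cellular anti-involutions on $\BRnn$ and $A_w$. On both sides the anti-involution is realized by horizontal reflection of diagrams; each generator $\UU_i$ of $\BRnn$ and each generator $U_i$ of $A_w$ is self-conjugate under this operation. Since $\varphi(\UU_i) = U_i$, the identity $\varphi(a^*) = \varphi(a)^*$ then follows on all of $\BRnn$.

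Second, I would apply $\varphi$ to \eqref{equivfchar} taken in $\BRnn$ for $a, b \in \BRnn$ representing $s, t \in \Delta^{\Blob}_{n-1}(\lambda)$. Using $\varphi(C_\lambda) = C_v$ and $\varphi(A^{<\lambda}) = A^{<v}$, both established in the proof of Theorem~\ref{finallymodules}, together with step one, one obtains
\begin{equation}\label{equivftr}
C_v \, \varphi(a)^* \varphi(b) \, C_v \equiv \langle s, t \rangle^{\B}_{n-1,\lambda} \cdot C_v \pmod{A^{<v}}.
\end{equation}
By construction of the induced isomorphism on cell modules, $\varphi(s) = \overline{\varphi(a) \, C_v}$ and $\varphi(t) = \overline{\varphi(b) \, C_v}$ in $\Delta_w(v)$, so applying \eqref{equivfchar} now inside $A_w$ identifies the left-hand side of \eqref{equivftr} with $\langle \varphi(s), \varphi(t) \rangle^{w}_{n,v} \cdot C_v$ modulo $A^{<v}$. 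Since $C_v$ represents the nonzero basis element $\overline{C}_{D_v, D_v}$ of $\Delta_w(v)$, coefficients can be compared in $R$ to yield the desired equality.

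The only subtle point, and the main obstacle in principle, is compatibility of $\varphi$ with the anti-involutions. This reduces to a transparent check on the diagrammatic generators, where horizontal reflection acts trivially in both settings; once it is in hand, the rest of the argument is a purely formal manipulation of the cellular ideal structure already transferred in Theorem~\ref{finallymodules}.
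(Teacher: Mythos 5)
Your proof is correct, but it follows a genuinely different route from the paper's. You transport the intrinsic multiplicative characterization of the cellular form ($C_\lambda\, a^{\ast} b\, C_\lambda \equiv \langle \overline{aC_\lambda},\overline{bC_\lambda}\rangle\, C_\lambda$ modulo lower terms) through $\varphi$, which requires exactly the three ingredients you isolate: $\varphi(C_\lambda)=C_v$ and $\varphi(A^{<\lambda})=A^{<v}$ (both already in the proof of Theorem \ref{finallymodules}), compatibility of $\varphi$ with the two anti-involutions (immediate on the flip-symmetric generators $\UU_i\mapsto U_i$, then extended since the $U_i$ generate $A_w$ by Theorem \ref{mainTheoremSection3new}), and torsion-freeness of $\overline{C}_v$ in $A^{v}/A^{<v}$, which follows from the double-leaves basis, so that coefficients of $C_v$ can be compared in $R$. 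The paper instead argues via invariant-form uniqueness: it base-changes to the algebraic closure $\overline{Q}$ of the fraction field of $R$, invokes irreducibility of the generic cell module and Schur's Lemma to conclude the two forms agree up to a scalar $\mu$, and then normalizes $\mu=1$ by evaluating at $C_v$. Your argument buys self-containedness — it works directly over $R$, uses only the cellular axioms and data already matched by $\varphi$, and avoids the unproved (though standard) generic irreducibility assertion; the paper's argument buys brevity and does not require checking that $\varphi$ intertwines the anti-involutions, at the cost of leaning on generic representation theory of the blob algebra.
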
  
\begin{dem}
A bilinear and invariant form $  \langle \cdot, \cdot \rangle $ on 
$ \Delta_w(v) $ corresponds to an 
$ A_w$-homomorphism
$ \Delta^{\Blob}_{ n-1}(\lambda) \rightarrow \Delta_{ n-1}^{\Blob, \ast}(\lambda)$
where $ \Delta^{\Blob, \ast}_{ n-1}(\lambda) $ is the dual of $ \Delta^{\Blob}_{ n-1}(\lambda)$.
For $ Q $ the fraction field of $ R$ and $\overline{Q} $ its algebraic closure we 
set $ \Delta^{\overline{Q}}_{ n-1}(\lambda) := \Delta^{\Blob}_{ n-1}(\lambda) \otimes_R \overline{Q} $.
Then $ \Delta^{\overline{Q}}_{ n-1}(\lambda)  $ is irreducible and so by Schur's Lemma 
$ \Delta^{\overline{Q}}_{ n-1}(\lambda) \rightarrow \Delta_{ n-1}^{\overline{Q},\ast}(\lambda)$ is
unique up to a scalar. Hence $  \langle \cdot, \cdot \rangle $ is unique up to a scalar $ \mu$, 
that is
\begin{equation}
\langle \psi(a), \psi(b) \rangle^{\B}_{n-1, \lambda} = \mu
   \langle a, b \rangle^w_{n, v} \, \, \, \mbox{for } a,b \in \Delta_{w, v}(\lambda)
\end{equation}  
But using $ a=b = C_v$, one {\color{black}{checks}} that $ \mu =1$ and so the Theorem follows.
\end{dem}  

\medskip
The purpose of the present paper is to study the form $ \langle \cdot, \cdot \rangle^w_{n, v} $. 
In view of Theorem
\ref{equivalentforms}, we can instead study the form 
$ \langle \cdot, \cdot \rangle^{\B}_{n-1, \lambda} $, which turns out to be easier to handle.

\section{Restriction of $ \Delta^{\Blob}_{ n}(\lambda) $ to $ \TRn$ }\label{restriction}
As already mentioned, there is an embedding $ \TRn \subseteq \BRn $ which at the diagrammatic level
is {\color{black}{an}} embedding of Temperley-Lieb diagrams in blob diagrams. 
This gives rise to a restriction functor
${\rm Res} = {\rm Res}^{\BRn}_\TRn  $ from $ {\BRn} $-modules to $ {\TRn} $-modules.
In this section we study the application of ${\rm Res} $ on $ \Delta^{\Blob}_{ n}(\lambda) $.

\medskip
Recall from Theorem \ref{cellblob} that the 
cell module $ \Delta^{\Blob}_n(\lambda) $ for $ \BRn$ (resp. $ \Delta^{\TL}_n(\lambda) $ for $ \TRn$)
has basis 
$ \{ \overline{C}_{s t_{\lambda}}\, | \, s \in {\tab}(\lambda) \} $.
From now on,
if $ \lambda \ge 0 $ 
we shall identify $  \overline{C}_{s t_{\lambda}}  $ with $ s $ so that
we consider the basis for $ \Delta^{\Blob}_n(\lambda) $ 
to consist of half-diagrams. 
For example, the basis of the $ \BRcinco$-module
$ \Delta^{\Blob}_5(1) $ consists of the following half-diagrams
\begin{equation}{\label{examplehalf}}
   \begin{array}{l}
    \raisebox{-.2\height}{\includegraphics[scale=0.7]{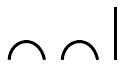}}, \, \, 
    \raisebox{-.2\height}{\includegraphics[scale=0.7]{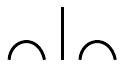}}, \, \, 
    \raisebox{-.2\height}{\includegraphics[scale=0.7]{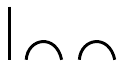}}, \, \,
 \raisebox{-.2\height}{\includegraphics[scale=0.7]{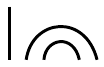}}, \, \,    
 \raisebox{-.2\height}{\includegraphics[scale=0.7]{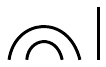}}, \\
 \raisebox{-.2\height}{\includegraphics[scale=0.7]{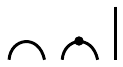}}, \, \,
 \raisebox{-.2\height}{\includegraphics[scale=0.7]{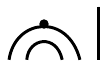}}, \, \,
 \raisebox{-.2\height}{\includegraphics[scale=0.7]{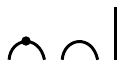}}, \, \,
 \raisebox{-.2\height}{\includegraphics[scale=0.7]{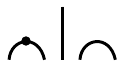}}, \\
      \raisebox{-.2\height}{\includegraphics[scale=0.7]{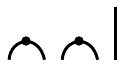}}
      \raisebox{-2\height}{ {\color{black}{.}}}
   \end{array}    
\end{equation}
If $ \lambda < 0 $ 
we still identify $  \overline{C}_{s t_{\lambda}}  $ with $ s $, but with the leftmost propagating
line marked.  
For example, the basis of the $ \BRcinco$-module
$ \Delta^{\Blob}_5(-3) $ consists of the following half-diagrams
\begin{equation}{\label{examplehalfneg}}
   \begin{array}{l}
    \raisebox{-.2\height}{\includegraphics[scale=0.7]{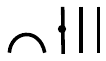}}, \, \, 
    \raisebox{-.2\height}{\includegraphics[scale=0.7]{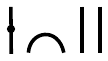}}, \, \, 
    \raisebox{-.2\height}{\includegraphics[scale=0.7]{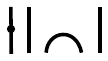}}, \, \,
        \raisebox{-.2\height}{\includegraphics[scale=0.7]{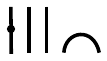}}, \, \, 
        \\
        \raisebox{-.2\height}{\includegraphics[scale=0.7]{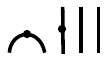}}
      \raisebox{-2\height}{ {\color{black}{.}}}
        \,  \, \, 
       \end{array}    
\end{equation}
Finally, for the $\TL_{\color{black}{n}}$-module $ \Delta^{{\color{black}{ \TL}}}_{{\color{black}{n}}}(\lambda) $
we once again identify
$  \overline{C}_{s t_{\lambda}}  $ with $ s $. 
Thus, for example the basis of the $ \TRcinco$-module $ \Delta^{\TL}_5(1) $ consists of the half-diagrams
that appear in the first row of \eqref{examplehalf}.

\medskip
In terms of these identifications the action of $ D \in \BRn$
(resp. $D \in \TRn$) on $ D_1 \in \Delta_n^\B(\lambda) $ (resp. $ D_1 \in \Delta_n^\TL(\lambda)) $ is given by
concatenation with $D_1$ on top of $D$, followed by the same
reduction process of extra blobs and internal loops, marked
or unmarked, that we gave for $ \BRn$ (resp. $\TRn$) itself.
If the result of this does not belong to the span of half-diagrams for
$  \Delta_n^\B(\lambda) $ (resp. $  \Delta_n^\TL(\lambda) $), 
we have $DD_1 = 0.$

\medskip
Suppose that $ \lambda \in \Lambda_{\pm n} $ and set $ k := \dfrac{n-|\lambda |}{2}$. We 
then define a filtration
$0 = \F^{-1}(\lambda) \subset  \F^0(\lambda) \subset \cdots \subset \F^k(\lambda) = 
\Delta^{\Blob}_n(\lambda) $ of $ \Delta^{\Blob}_n(\lambda) $ via 
\begin{equation}\label{ffiltration}
  \F^i(\lambda) := \left\{
\begin{array}{ll}
  \spa_R \{ s \, | \, s \in {\tab}(\lambda) \mbox{ has } i \mbox{ or less blobs} \} \ & \mbox{ if } \lambda \ge 0 \\
  \spa_R \{ s \, | \, s \in {\tab}(\lambda) \mbox{ has } i+1 \mbox{ or less blobs} \} \ & \mbox{ if }
  \lambda < 0 {\color{black}{.}}
\end{array} \right.
\end{equation}
For example, for $ \lambda $ as in {\eqref{examplehalf}} we have that 
$ \F^0(\lambda) $ is the span of the diagrams of the first row, 
$ \F^1(\lambda) $ is the span of the diagrams of the first two rows and $ \F^2(\lambda) =
\Delta^{\B}_5(1) $.

\medskip
The following result is the analogue of Lemma 8.2 from \cite{blob positive}. 
\begin{lemma}\phantomsection\label{homomorphismf}
  \begin{description}
   \item[a)] $ \F^i(\lambda) $ is a $ \TRn$-submodule of $ {\rm Res} \Delta^{\Blob}_n(\lambda) $. 
  \item[b)] There is a homomorphism of $\TRn$-modules $\pi_i:  \F^i(\lambda) \rightarrow
    \Delta^{\TL}_n(|\lambda|+2i) $ that induces an isomorphism
    \begin{equation}
      \overline{\pi}_i:  \F^i(\lambda)/\F^{i-1}(\lambda)  \cong    \Delta^{\TL}_n(|\lambda|+2i)
  { {\color{black}{.}}}
    \end{equation}
  \end{description}
\end{lemma}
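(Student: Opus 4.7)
\medskip
\noindent\textbf{Proof plan.}
For part \textbf{a)}, the key observation is that the Temperley-Lieb generators $\UU_j$ with $j\ge 1$ carry no blobs. Given a basis element $s \in {\tab}(\lambda)$, the composition $\UU_j \cdot s$ is obtained by concatenation: arcs are formed by alternating chains of $\UU_j$-arcs and $s$-arcs, and blobs on the resulting arcs descend only from blobs of $s$. Any closed loop produced by the concatenation is either unmarked (contributing $-2$) or marked (contributing $y$, absorbing at least one blob); a doubly-blobbed arc reduces via the $x$-rule, again decreasing the blob count. In every case the number of blobs in the output is bounded above by the number in $s$, so each $\F^i(\lambda)$ is stable under $\TRn$. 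For $\lambda<0$ the additional mark on the leftmost propagating line is transported along chains in the same way but cannot be created, so the argument applies verbatim.

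For part \textbf{b)}, I would define $\pi_i$ on basis elements by
\begin{equation*}
  \pi_i(s) = \begin{cases} 0 & \mbox{if } s \mbox{ has strictly fewer than } i \mbox{ blobbed cups}, \\
    \widetilde{s} & \mbox{if } s \mbox{ has exactly } i \mbox{ blobbed cups},
    \end{cases}
\end{equation*}
where $\widetilde{s}$ is the Temperley-Lieb half-diagram obtained from $s$ by \emph{cutting} each of the $i$ blobbed cups into a pair of propagating lines attached to $2i$ new northern points inserted on the left, and, for $\lambda<0$, forgetting the propagating mark. The result has exactly $|\lambda|+2i$ propagating lines, so it is a basis element of $\Delta^{\TL}_n(|\lambda|+2i)$. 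By construction $\pi_i$ vanishes on $\F^{i-1}(\lambda)$ and so induces $\overline{\pi}_i$ on the quotient.

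The crux is the verification that $\pi_i$ is $\TRn$-linear. For $D \in \TRn$ and $s$ with exactly $i$ blobs, the expansion of $Ds$ in $\Delta^{\Blob}_n(\lambda)$ splits into contributions with exactly $i$ blobs and contributions with strictly fewer blobs; the latter lie in $\F^{i-1}(\lambda)$ and are killed by $\pi_i$. The dictionary to be checked on the surviving terms is: the formation of a marked loop between $D$ and a blobbed cup of $s$ (reducing the blob count by one, so the corresponding term is in $\F^{i-1}$) corresponds precisely to $D$ joining the two propagating lines of $\pi_i(s)$ that were produced by cutting that same blobbed cup, which reduces the propagating-line count strictly below $|\lambda|+2i$ and hence vanishes in $\Delta^{\TL}_n(|\lambda|+2i)$. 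The remaining chain behaviours --- extending a blobbed cup by a $D$-arc to another $s$-arc, forming unmarked loops with unblobbed cups, or combining two blobbed cups into a doubly-blobbed arc (factor $x$, yielding an $\F^{i-1}$-term matched on the other side by a propagation drop) --- match cleanly on both sides. Once linearity is established, bijectivity of $\overline{\pi}_i$ on basis elements is immediate: the cut is reversed by rejoining the leftmost $2i$ propagating lines into $i$ nested blobbed cups (and reinstating the propagating mark if $\lambda<0$), giving a bijection between blob half-diagrams in $\tab(\lambda)$ with exactly $i$ blobbed cups and Temperley-Lieb half-diagrams in $\tab(|\lambda|+2i)$.

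The main obstacle is this case analysis for $\TRn$-linearity, where one must carefully match the coefficients produced on the blob side (factors $-2$, $y$ and $x$ coming from unmarked loops, marked loops and multiply-blobbed arcs) against those produced on the Temperley-Lieb side (factor $-2$ from unmarked loops, and outright vanishing whenever propagation drops). The $\lambda<0$ case adds the subtlety that the propagating mark must be shown to travel along the unique marked propagating line through any $D$-action without contaminating the cut procedure; this is what ensures that the shift by one in the definition of $\F^i(\lambda)$ for $\lambda<0$ is exactly compensated by discarding the mark in $\pi_i$.
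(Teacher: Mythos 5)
Your proposal is correct and follows essentially the same route as the paper: the paper likewise notes that the $\TRn$-action cannot create blobs (giving \textbf{a)}) and defines $\pi_i$ exactly as you do, by cutting each marked southern arc into two propagating lines and deleting the mark on the propagating line when $\lambda<0$, leaving the routine case-check of $\TRn$-linearity to the analogous argument in \cite{blob positive}. One small correction to your bijectivity step: since marked arcs must be exposed to the left wall, the $i$ blobbed cups of a basis diagram are pairwise un-nested, so the inverse of the cutting map rejoins the leftmost $2i$ propagating lines in consecutive, side-by-side pairs rather than into \emph{nested} cups.
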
  
\begin{dem} ${ \bf a)}$ follows from the fact that the action of $ \TRn$ does not produce new blobs.
To show ${ \bf b)}$, we use the map $ \pi_i: \F^i(\lambda) \rightarrow 
\Delta^{\TL}_n(|\lambda|+2i) $ that transforms a marked southern arc to two propagating lines, and
removes the mark on any propagating line. For example, for $ \Delta^{\Blob}_5(1) $ we have
that {\color{black}{$ \pi_1 $}} transforms the diagrams of the second row of {\eqref{examplehalf}} to
the following diagrams
\begin{equation}{\label{examplehalftransform}}
 \raisebox{-.2\height}{\includegraphics[scale=0.7]{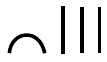}}, \, \,
  \raisebox{-.2\height}{\includegraphics[scale=0.7]{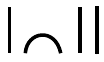}}, \, \,
 \raisebox{-.2\height}{\includegraphics[scale=0.7]{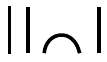}}, \, \,
 \raisebox{-.2\height}{\includegraphics[scale=0.7]{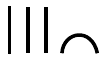}} \!\!
      \raisebox{-2\height}{ {\color{black}{.}}}
\end{equation}
As in \cite{blob positive}, one readily checks that $ \pi_i $ is a homomorphism
of $ \TL_{{\color{black}{n}}}$-modules,
that induces an isomorphism
$\overline{\pi}_i:  \F^i(\lambda)/\F^{i-1}(\lambda)  \cong    \Delta^{\TL}_n(|\lambda|+2i)$.
\end{dem}  

\medskip
\noindent

Our next goal is to construct sections for the $\pi_i$'s from Lemma \ref{homomorphismf}. For this
we need to recall the {\it Jones-Wenzl idempotent} $ \JWn$ for $ \TRn$, see
\cite{Jo} and \cite{Wenzl}. It is defined as the unique element $ \JWn \in \TRn $ that
satisfies the conditions
\begin{equation}\label{conditionsJW}
{\rm coef}_1(\JWn) = 1 \mbox{  and  }  \UU_i \JWn = \JWn \UU_i=  0 \mbox{ for all } i=1,2,\ldots,n-1 
\end{equation}
where $  {\rm coef}_1(\JWn) $ denotes the coefficient of $ 1 $ when $  \JWn $ is expanded in the diagram
basis for $ \TRn$. We use the standard rectangle notation to indicate $ \JWn$, as follows 
\begin{equation}
\JWn= 
\raisebox{-.45\height}{\includegraphics[scale=0.9]{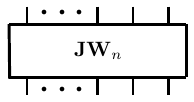}} \in \TRn
{ {\color{black}{.}}}
\end{equation}
For example, we have that 
\begin{equation}\label{JWrectangle}
  \raisebox{-.35\height}{\includegraphics[scale=0.9]{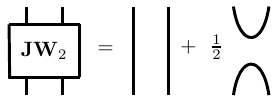}} 
\end{equation}
and
\begin{equation}
  \raisebox{-.35\height}{\includegraphics[scale=0.9]{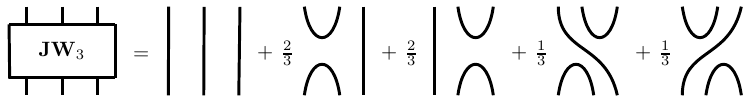}} 
      \raisebox{-12\height}{ {\color{black}{.}}}
\end{equation}
Apart from special cases, see for example Corollary 3.7 of \cite{GL2}, 
there is no closed formula for the coefficient of
a diagram in $ \JWn$, but there are several, equivalent, recursive formulas that can be used
to calculate $ \JWn$, for example 
\begin{equation}\label{firstrecursionJW}
  \raisebox{-.45\height}{\includegraphics[scale=0.9]{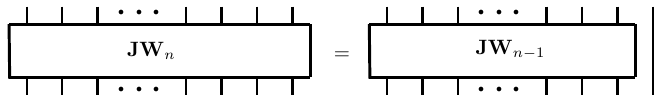}} +\frac{n-1}{n}
    \raisebox{-.45\height}{\includegraphics[scale=0.9]{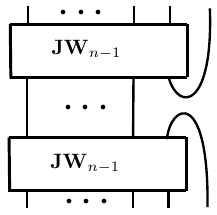}}
      \raisebox{-36\height}{ {\color{black}{.}}}
\end{equation}
In the next section we shall explain another recursive formula for $ \JWn$ that turns out to
be useful for our key calculations. 

Important properties of the $\JWn^{\! \!\prime}$s are their idempotency, as already mentioned,
and their invariance under vertical reflection as well as horizontal reflection, that is $ \ast$. Furthermore,
they satisfy the following absorption property
\begin{equation}
  \raisebox{-.5\height}{\includegraphics[scale=0.9]{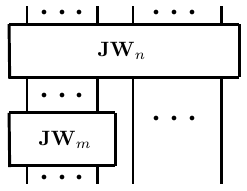}}  \,\,\,= \,\,\,
    \raisebox{-.5\height}{\includegraphics[scale=0.9]{JWn.pdf}} 
\end{equation}
where $ m \le n$

\medskip
We now return to the $ \BRn$-module $\Delta_n^{\Blob}(\lambda) $ and its filtration
$ \{ \F^{i}(\lambda) \} $. For $ i =0, \ldots, k $ we define $ e_{i}^{\lambda} \in \F^{i}(\lambda) $
as the following element
\begin{equation}
  e_{i}^{\lambda} = \left\{
  \begin{array}{cc}
    \raisebox{-.2\height}{\includegraphics[scale=0.9]{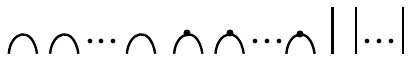}} & \mbox{ if } \lambda \ge 0 \\
    \raisebox{-.2\height}{\includegraphics[scale=0.9]{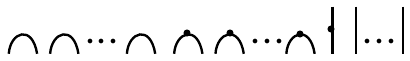}} & \mbox{ if } \lambda < 0
  \end{array}
    \right.
\end{equation}
that is, the number of blobs on $   e_{i}^{\lambda} $ is $ i $ if $ \lambda \ge 0 $ and $ i+1 $
if $ \lambda < 0$.
In general, for $D $ any Temperley-Lieb diagram, we define $1^j D$ as the left concatenation
of $ j $ vertical lines on $D$, and we extend this definition linearly to the Temperley-Lieb
algebra itself. For example, 
for $ j = n-| \lambda | -2i $ we have $ 1^{j} \mathbf{JW}_{| \lambda| +2i } \in \TRn$ which
we depict as follows 
\begin{equation}
  1^{j} \mathbf{JW}_{| \lambda| +2i } =
    \raisebox{-.38\height}{\includegraphics[scale=0.9]{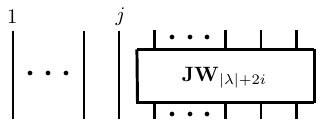}} 
      \raisebox{-17\height}{ {\color{black}{.}}}
\end{equation}
{\color{black}{With this choice of $ j $ we}} define elements $   f_{i}^{\lambda} \in \F^{i}(\lambda) $
as follows 
\begin{equation}\label{cases}
  f_{i}^{\lambda} :=    (1^{j} \mathbf{JW}_{| \lambda| +2i })  e_{i}^{\lambda}  =\left\{
  \begin{array}{cc}
    \raisebox{-.2\height}{\includegraphics[scale=0.9]{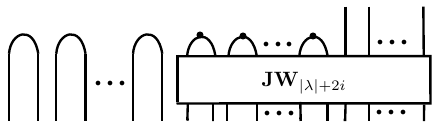}} & \mbox{ if } \lambda \ge 0 \\
    \raisebox{-.2\height}{\includegraphics[scale=0.9]{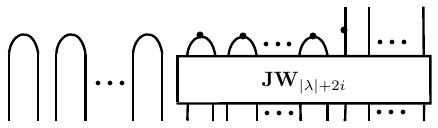}} & \mbox{ if } \lambda < 0 {\color{black}{.}}
  \end{array}
    \right.
\end{equation}
%\cancel{\color{black}{We have the following Lemma.}}
\begin{lemma}\label{wehavethefollowingLemma}
Let $\pi_i:  \F^i(\lambda) \rightarrow
\Delta^{\TL}_n(|\lambda|+2i) $ be the homomorphism from Lemma \ref{homomorphismf}.
%\cancel{\color{black}{Then}}
We have that
\begin{equation}
  \pi_i(   e_{i}^{\lambda} ) = \pi_i(   f_{i}^{\lambda} ) {\color{black}{.}}
\end{equation}
\end{lemma}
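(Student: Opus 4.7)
The plan is to exploit the fact that $\pi_i$ is a $\TRn$-module homomorphism (by Lemma \ref{homomorphismf}(b)) together with the defining idempotency of the Jones-Wenzl element. Setting $k := |\lambda|+2i$, the homomorphism property immediately gives
\[
\pi_i(f_i^{\lambda}) \;=\; \pi_i\bigl((1^{j} \mathbf{JW}_{k})\, e_i^{\lambda}\bigr) \;=\; (1^{j} \mathbf{JW}_{k})\, \pi_i(e_i^{\lambda}),
\]
so the task reduces to showing $(1^{j} \mathbf{JW}_{k})\, \pi_i(e_i^{\lambda}) = \pi_i(e_i^{\lambda})$ inside $\Delta^{\TL}_n(k)$.

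Next I would analyze the half-diagram $\pi_i(e_i^{\lambda})$ geometrically. For $\lambda\ge 0$, the element $e_i^{\lambda}$ is built so that its $i$ blobbed southern arcs sit in precisely those positions which, after $\pi_i$ replaces each such arc by two propagating lines, produce $2i$ propagating lines lining up with the $k$ bottom points of the $\mathbf{JW}_k$ factor of $1^{j}\mathbf{JW}_{k}$. Combined with the $|\lambda|$ original propagating lines of $e_i^{\lambda}$, this means that all $k$ propagating lines of $\pi_i(e_i^{\lambda})$ have their bottom endpoints exactly in the region where $\mathbf{JW}_k$ acts; the remaining southern points of $\pi_i(e_i^\lambda)$ (those met by the $j$ through-strands $1^j$) carry cups that are simply transported upward unchanged. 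The case $\lambda<0$ is entirely analogous once one notes that $\pi_i$ transforms the $i$ blobbed arcs into propagating lines and strips the mark from the leftmost propagating line.

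To finish, expand $\mathbf{JW}_{k} = 1 + \sum_{D\ne 1} c_D D$ in the TL diagram basis, where every $D\ne 1$ is a product involving some $\UU_l$, and hence has at least one cup along its top boundary. For such $D$, the top cup of $(1^{j}D)$ joins two bottom points of $\pi_i(e_i^\lambda)$ which, by the previous step, are both endpoints of propagating lines; the concatenation therefore produces a half-diagram with strictly fewer than $k$ propagating lines. Such diagrams lie in the lower cell ideal and vanish in $\Delta^{\TL}_n(k)$. Only the identity term of $\mathbf{JW}_{k}$ contributes, giving $(1^{j} \mathbf{JW}_{k})\, \pi_i(e_i^{\lambda}) = \pi_i(e_i^{\lambda})$ as required.

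The main obstacle is the bookkeeping in the second step: one must verify precisely that the $k$ propagating endpoints of $\pi_i(e_i^{\lambda})$ sit inside the JW-window. This is forced by the design of $e_i^{\lambda}$ (as drawn in \eqref{cases}), but recording it carefully — especially distinguishing the $\lambda\ge 0$ and $\lambda<0$ pictures and confirming that the leftmost propagating line of $e_i^{\lambda}$ in the negative case does not create an obstruction after $\pi_i$ strips its mark — is the only nontrivial geometric verification. Once this is done, the collapse of non-identity JW terms is a direct application of the killing property \eqref{conditionsJW} in cell-module form.
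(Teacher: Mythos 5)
Your proof is correct, and it organizes the argument somewhat differently from the paper, in a way that is arguably cleaner. The paper does not pass through the $\TRn$-equivariance of $\pi_i$: it works entirely upstairs in $\Delta^{\Blob}_n(\lambda)$ and shows, for every non-identity diagram $D$ in the expansion of $\mathbf{JW}_{|\lambda|+2i}$, that $(1^jD)\,e_i^{\lambda}$ lies in $\F^{i-1}(\lambda)=\ker\pi_i$; this requires a three-way case analysis according to whether a northern arc of $D$ meets only the blobbed arcs of $e_i^{\lambda}$ (the blob count drops), only the vertical lines (the number of propagating lines drops, so the term dies in the cell module), or both (in which case a blob lands on a propagating line and the term again dies). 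Your reduction $\pi_i(f_i^{\lambda})=(1^{j}\mathbf{JW}_{|\lambda|+2i})\,\pi_i(e_i^{\lambda})$ is legitimate because $\F^i(\lambda)$ is a $\TRn$-submodule and $\pi_i$ is a $\TRn$-homomorphism (Lemma \ref{homomorphismf}), and it transfers the computation into $\Delta^{\TL}_n(|\lambda|+2i)$, where the case analysis collapses to a single observation: after applying $\pi_i$, every southern point in the Jones--Wenzl window is the foot of a propagating line, so each cap of a non-identity term closes off two through-strands and the product vanishes in the cell module, leaving only the identity term, whose coefficient is $1$ by \eqref{conditionsJW}. The geometric input you single out --- that in $e_i^{\lambda}$ the blobbed arcs and the (marked, if $\lambda<0$) propagating lines occupy exactly the rightmost $|\lambda|+2i$ positions under the JW box, with the unblobbed cups under $1^{j}$ --- is exactly what \eqref{cases} depicts and is the same positional fact the paper's own case analysis relies on, so no extra verification is needed beyond what you describe. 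The two arguments prove the same underlying claim (each non-identity term of the JW expansion sends $e_i^{\lambda}$ into $\ker\pi_i$); what your route buys is that the blob-level bookkeeping, including the delicate mixed case with a blob landing on a propagating line, is replaced by the single through-strand count in the Temperley--Lieb cell module.
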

\begin{dem} 
  We show that if $D \neq 1 $ is any diagram appearing in the expansion
  of $  \mathbf{JW}_{| \lambda| +2i}$ then $ (1^j D)  e_{i}^{\lambda} \in \F^{i-1}(\lambda) $, where
  $ j = n-| \lambda | -2i $, from which the proof of the Lemma follows
  since $ {\rm ker}\, \pi_i = \F^{i-1}(\lambda) $ and 
  ${\rm coef}_1( \mathbf{JW}_{| \lambda| +2i}) = 1$.
  To prove this claim
we first consider $ \lambda \ge 0$. Any $ D \neq 1 $ in the expansion of 
$  \mathbf{JW}_{| \lambda| +2i}$ contains at least one arc connecting two northern points. If that arc
only involves blobbed arcs in \eqref{cases}, illustrated with red below, or only
involves vertical lines in \eqref{cases}, illustrated with blue below, then
$ (1^j D)e_i^{\lambda} $ has strictly less blobs than $ e_i^{\lambda}$ or has
strictly less 
propagating lines than $ e_i^{\lambda}$, proving 
the claim in these cases.
\begin{equation}
\raisebox{-.5\height}{\includegraphics[scale=0.9]{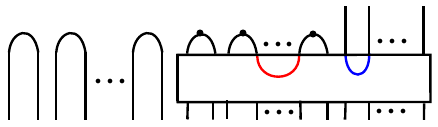}} 
      \raisebox{-23\height}{ {\color{black}{.}}}
\end{equation}
If the arc involves both the blobbed arcs in \eqref{cases} and the vertical lines in \eqref{cases},
then either we will be in the previous case or 
there will be a, possibly different, arc that involves the last blobbed arc and the first vertical line in 
\eqref{cases}, illustrated with blue below. But then 
$ (1^j D ) e_{i}^{\lambda}  $ has a vertical blobbed line  and is zero in $ \Delta_n^{\Blob}(\lambda)$, finishing the proof of the case
$ \lambda \ge 0$. 
\begin{equation}
\raisebox{-.5\height}{\includegraphics[scale=0.9]{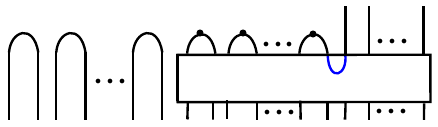}} 
      \raisebox{-23\height}{ {\color{black}{.}}}
\end{equation}
The case $ \lambda < 0$ is shown in a similar way.
\end{dem}  

\medskip
We now define the $ \TRn$-module $ S_i(\lambda)  $ via
\begin{equation}
  S_i(\lambda) := \TRn f_i^\lambda \subseteq \F^{i}(\lambda)
 {\color{black}{.}}
\end{equation}
  
\medskip
Recall the bilinear form $ \langle \cdot, \cdot \rangle^{\Blob}_{n,\lambda} $ on
$ \Delta^{\Blob}_n(\lambda) $.
In terms of half-diagrams $ D, D_1 $ for $ \Delta^{\Blob}_n(\lambda) $, 
we have that $ \langle D, D_1 \rangle_{n,\lambda}^{\Blob} $ is given by expanding  
$ D^{\ast} D_1 $ in terms of the diagram basis for $ \mathbb{B}^{x,y}_{| \lambda | } $ and
taking the coefficient of $ 1 $ if $ \lambda > 0 $,
the coefficient of $ \emptyset $ if $ \lambda = 0 $ and 
the coefficient of $ \UU_0 $ if
$ \lambda <0 $.
For example, for $ \langle \cdot, \cdot \rangle^{\Blob}_{5,1} $ we have
\begin{equation}
\left\langle  \raisebox{-.35\height}{\includegraphics[scale=0.7]{halfblobH.pdf}}, 
\raisebox{-.35\height}{\includegraphics[scale=0.7]{halfblobH.pdf}} \right\rangle_{ 5,1}
= {\rm coef}_1 \! \left( \! \! \raisebox{-.35\height}{\includegraphics[scale=0.7]{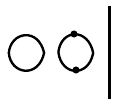}} \right)
= -2 xy {\color{black}{.}}
\end{equation}  
There is a similar description of the bilinear form $ \langle \cdot, \cdot \rangle^{\TL}_{n,\lambda} $ on
$ \Delta^{\TL}_n(\lambda) $. With this notation we can now prove the following Theorem.
\begin{theorem}\phantomsection\label{withthisnotation}
    \begin{description}
   \item[a)]  We have $ (e_j^{\lambda})^{\ast} \TRn  f_i^{\lambda}=0 $ for $ j <i$.
\item[b)]
     We have $ \langle \F^{i-1}(\lambda), S_i(\lambda)  \rangle^{\Blob}_{n,\lambda} =0 $.  
\item[c)]
     We have $ S_i(\lambda) \cap \F^{i-1}(\lambda) =0$.
   \item[d)]  The $ \TRn$-modules $S_i(\lambda)  $ and $ \Delta^{\TL}_n(|\lambda| +2i) $ are isomorphic.
\end{description}
  \end{theorem}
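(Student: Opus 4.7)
The four parts are tightly linked and I would establish them in the order (a), (b), (d)$\Rightarrow$(c), all ultimately resting on the absorption property $\UU_k \mathbf{JW}_m = 0 = \mathbf{JW}_m \UU_k$ of the Jones--Wenzl projector.

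For (a), I would unfold $(e_j^\lambda)^* D f_i^\lambda$ into a single concatenated diagram in $\mathbb{B}^{x,y}_{|\lambda|}$ using $f_i^\lambda = (1^j \mathbf{JW}_{|\lambda|+2i})\, e_i^\lambda$, with the $\mathbf{JW}_{|\lambda|+2i}$-block sitting over the $|\lambda|+2i$ strands of $e_i^\lambda$ that carry the blobs and the propagating lines. The upper portion $(e_j^\lambda)^* D$ contributes only $j$ blobs (or $j+1$ if $\lambda<0$), strictly fewer than the $i$ (or $i+1$) blobs on $e_i^\lambda$. A blob-counting argument forces at least one blobbed arc of $e_i^\lambda$ to be paired, through the $\mathbf{JW}$-region, with a non-blobbed strand coming from $(e_j^\lambda)^* D$, and direct diagrammatic inspection shows that this mismatch produces a $\UU_k$ (cup or cap) directly adjacent to $\mathbf{JW}_{|\lambda|+2i}$. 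By the absorption property the whole product vanishes in $\mathbb{B}^{x,y}_{|\lambda|}$.

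For (b), I would use the $\TRn$-invariance of $\langle\cdot,\cdot\rangle^{\B}_{n,\lambda}$ (inherited from $\BRn$-invariance) together with the observation that every basis half-diagram $s\in\F^{i-1}(\lambda)$ can be written as $s = \tilde D \cdot e_{j(s)}^\lambda$ for some $j(s)\le i-1$ and $\tilde D\in\TRn$, since in the basis the blobs of $s$ always sit on the leftmost arcs in a canonical configuration. Then for any $D\in\TRn$,
\begin{equation*}
\langle s, D f_i^\lambda \rangle^{\B}_{n,\lambda}
= \langle e_{j(s)}^\lambda, \tilde D^* D \, f_i^\lambda \rangle^{\B}_{n,\lambda},
\end{equation*}
which is the relevant coefficient in the diagram-basis expansion of $(e_{j(s)}^\lambda)^* (\tilde D^* D)\, f_i^\lambda \in \mathbb{B}^{x,y}_{|\lambda|}$. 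Since $\tilde D^* D\in\TRn$, part (a) shows the algebra product itself is already zero, so the coefficient vanishes, proving (b).

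For (d), and hence (c), I would construct a $\TRn$-module map $\sigma\colon \Delta^{\TL}_n(|\lambda|+2i)\to S_i(\lambda)$ by $D\cdot \pi_i(e_i^\lambda)\mapsto D f_i^\lambda$, using Lemma~\ref{wehavethefollowingLemma} and the fact that any Temperley--Lieb half-diagram with the maximal number of propagating lines generates the corresponding cell module. Well-definedness reduces to the implication $D\pi_i(e_i^\lambda)=0 \Rightarrow Df_i^\lambda=0$: the hypothesis forces a cap of $D$ to sit inside the leftmost $|\lambda|+2i$ strands, which is exactly the region occupied by $\mathbf{JW}_{|\lambda|+2i}$ in $1^j\mathbf{JW}_{|\lambda|+2i}$, so $D\cdot 1^j\mathbf{JW}_{|\lambda|+2i}=0$ in $\TRn$ by absorption, and hence $Df_i^\lambda=0$. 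Since $\pi_i\circ\sigma=\id$ on the cyclic generator, $\sigma$ is a two-sided inverse of the surjection $\pi_i|_{S_i(\lambda)}$, yielding the isomorphism in (d) and the triviality $S_i(\lambda)\cap\F^{i-1}(\lambda)=\ker(\pi_i|_{S_i(\lambda)})=0$ in (c). The principal obstacle throughout is the diagrammatic bookkeeping in (a), especially the $\lambda<0$ case, where the extra mark on the leftmost propagating line of $C_\lambda$ must be tracked with care so that the unmatched blobbed arc of $e_i^\lambda$ really forces a $\UU_k$ adjacent to $\mathbf{JW}_{|\lambda|+2i}$, rather than being absorbed into a marked loop that would leave the diagram alive.
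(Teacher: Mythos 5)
Your parts (a) and (b) follow the paper's own route: (a) is the diagrammatic annihilation argument resting on $\UU_m\mathbf{JW}_{|\lambda|+2i}=\mathbf{JW}_{|\lambda|+2i}\UU_m=0$ (the paper carries this out by a careful case analysis tracing the strand leaving the first Jones--Wenzl position, where your sketch only gestures at the mechanism), and (b) is obtained, exactly as in the paper, from the fact that $\F^{i-1}(\lambda)$ is generated over $\TRn$ by the $e_j^{\lambda}$ with $j<i$ together with invariance of the form.

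The genuine gap is in your derivation of (d), and hence of (c). Well-definedness of $\sigma$ amounts to: $z\,\pi_i(e_i^{\lambda})=0$ implies $z f_i^{\lambda}=0$ for all $z\in\TRn$ (arbitrary linear combinations, not only single diagrams), and your mechanism for this -- that $D\pi_i(e_i^{\lambda})=0$ forces a cap of $D$ lying wholly inside the $\mathbf{JW}_{|\lambda|+2i}$-strands (which, note, are the rightmost $|\lambda|+2i$ strands, since $1^jD$ places the $j$ identity strands on the left) -- is false. The loss of propagating lines can instead be caused by caps of $D$ that straddle the boundary between the $1^j$-region and the Jones--Wenzl region, routed through the unblobbed cups of $e_i^{\lambda}$ sitting under $1^j$. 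Concretely, take $n=6$, $\lambda=2$, $i=1$, $j=2$, so that $\pi_1(e_1^{\lambda})$ has a cup at positions $(1,2)$ and propagating lines at $3,4,5,6$; let $D$ be the Temperley--Lieb diagram with northern arcs $(1,4)$ and $(2,3)$, southern arcs $(1,2)$ and $(3,4)$, and vertical strands at $5,6$. Then $D\pi_1(e_1^{\lambda})=0$ in $\Delta^{\TL}_6(4)$, because the lines at $3$ and $4$ become joined through the cup $(1,2)$; yet no cap of $D$ lies inside positions $3,\ldots,6$, so the absorption property cannot be invoked and your argument gives nothing for such $D$. Worse, for such $D$ the desired vanishing $Df_i^{\lambda}=0$ in $\Delta^{\Blob}_n(\lambda)$ is, via Lemma \ref{wehavethefollowingLemma}, exactly the statement $Df_i^{\lambda}\in S_i(\lambda)\cap\F^{i-1}(\lambda)=0$, i.e.\ part (c): your plan is circular, since you need (c) to make $\sigma$ well defined but propose to deduce (c) from (d).

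The paper breaks this circularity by proving (c) first and independently: combining (b) with the observation that the restriction of $\langle\cdot,\cdot\rangle^{\Blob}_{n,\lambda}$ to $\F^{i-1}(\lambda)$ is non-degenerate, which is seen by specializing $x=1$, $y=-2$ and comparing with the non-degenerate Temperley--Lieb form $\langle\cdot,\cdot\rangle^{\TL}_{n,|\lambda|}$; then (d) follows from surjectivity of the composite $S_i(\lambda)\rightarrow\F^{i}(\lambda)/\F^{i-1}(\lambda)$ (Lemma \ref{wehavethefollowingLemma} and Lemma \ref{homomorphismf}) together with injectivity supplied by (c). You should either adopt that argument or give an honest proof of the implication $z\,\pi_i(e_i^{\lambda})=0\Rightarrow z f_i^{\lambda}=0$; the cap-absorption claim alone does not provide it.
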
  
\begin{dem}
To show $ {\bf a})$ we must check that the following diagram is zero for every diagram $ D$ for $  \TRn$.
\begin{equation}\label{mustshowzero}
\raisebox{-.45\height}{\includegraphics[scale=0.9]{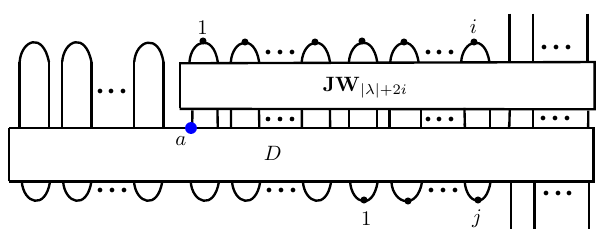}}
      \raisebox{-40\height}{ {\color{black}{.}}}
\end{equation}
Let $ a := n-|\lambda| -2i+1$ and consider all possible cases for the line $L $ leaving
the $a$'th northern point of $D$, indicated with blue
{\color{black}{in \eqref{mustshowzero}}}.

If $L$ {\color{black}{pairs $a$ with}} a northern point {\color{black}{of $D$ which is located}} to
the right of $ a$, then
$L$ {\color{black}{is}} a northern arc and so we get immediately by \eqref{conditionsJW}
that \eqref{mustshowzero} is zero. 
If $L$ {\color{black}{pairs $a $ with}}
a southern point
{\color{black}{of $D$ which is located}} 
strictly to the right of $a$, then the area to the right of
$ L $ has strictly more northern than southern points and so there will be
a northern arc to the right of $a$. We then conclude once again by \eqref{conditionsJW}
that \eqref{mustshowzero} is zero.
This is the situation indicated in the figure below.
\begin{equation}
\raisebox{-.45\height}{\includegraphics[scale=0.9]{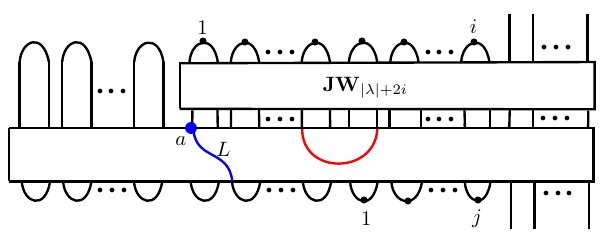}}=0
{ {\color{black}{.}}}
\end{equation}
If $L$ {\color{black}{pairs $a$ with}}
a southern point {\color{black}{which is located either directly below or}}
to the left of $a$, then it will be a left
endpoint of one of the southern arcs of $ D $, 
%\cancel{\color{black}{and to the left of $ a$}}
since
the parities of the numbers of northern and southern points of $ D$ to the left of
{\color{black}{$ L$}} must be 
the same. If the right endpoint of that arc is connected to a northern point of $ D $, then once again
by \eqref{conditionsJW} we get 
that \eqref{mustshowzero} is zero, so let us assume that it is connected to a southern point of $D$
via a line $A_1$. 
That southern point must be a left endpoint of an arc below $D$
since otherwise the number of 
southern points below $A_1$ {\color{black}{would}} be odd. If the arc is unmarked and its right endpoint 
is joined to a northern point of $D$,
we get once again by \eqref{conditionsJW} that 
\eqref{mustshowzero} is
zero, so let us suppose that it is joined to another southern point of $ D $ via a line $A_2$.
Repeating the previous argument, the right endpoint of $ A_2$ must be the left endpoint of an arc
whose right endpoint, in case the arc is unmarked, is connected by a line $A_3 $ to another southern
point of $D$. Repeating this argument, we produce a series of southern lines $A_1, A_2, \ldots,A_k$, that
finally ends up in {\color{black}{either an endpoint of}}
one of the blobbed southern arcs below $D$, {\color{black}{or in an endpoint of one of the vertical
lines below $D$.}} But since $ j<i $ we then 
conclude that not all northern points of $ D$  to the right of $ a$ can be
endpoints of 
propagating lines, and so we conclude
by \eqref{conditionsJW} that 
\eqref{mustshowzero} is
zero. Below we indicate the argument.  
\begin{equation}
  \raisebox{-.45\height}{\includegraphics[scale=0.9]{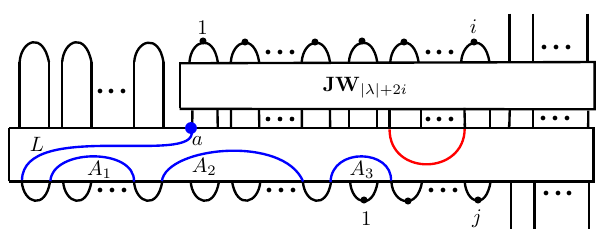}}=0
 {\color{black}{.}}  
\end{equation}
Finally, if $L$ {\color{black}{pairs $a $ with a northern point of $ D$ to the left of $a $}},
the argument is essentially the same
as in the previous case. We indicate it as follows
\begin{equation}
\raisebox{-.45\height}{\includegraphics[scale=0.9]{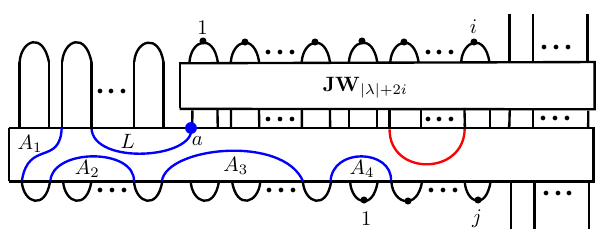}}=0 {\color{black}{.}}
\end{equation}
To show $ {\bf b}) $, we first observe that, 
as {\color{black}{a}} $ \TRn$-module,  
$ \F^{i-1}(\lambda) $ is generated
by $ \{ e_j^{\lambda} \, | \   j <i \}$, as follows from Lemma \ref{homomorphismf}.
In view of this, $  {\bf b}) $ follows from $ {\bf a}) $ and the definition of
$ \langle \cdot, \cdot \rangle^{\B}_{n,\lambda} $.

We next show $ {\bf c}) $. Suppose that there is 
$ s \in S_i(\lambda) \cap \F^{i-1}(\lambda) \setminus \{0\}$.
To get the desired contradiction, we first 
claim that
the restriction of $ \langle \cdot, \cdot \rangle^{\B}_{n,\lambda} $ to $ \F^{i-1}(\lambda) $
is non-degenerate. Indeed, any diagram in $ \F^{i-1}(\lambda) $ can be viewed
as a diagram for $ \Delta_n^{\TL}(|\lambda|)$,
decorated with certain blobs, 
see for example
{\eqref{examplehalf}} and {\eqref{examplehalfneg}}. 
For $ D $ a diagram for $  \F^{i-1}(\lambda) $ we denote by $ \TL(D) $ the associated diagram
for $ \Delta^{\TL}_n(|\lambda|)$, obtained by removing the blobs.
Then via the specialization 
$ x=1, y=-2$ given in \eqref{istheTL}, we have that
\begin{equation}\label{wededuce}
  \left(\langle D, D_1 \rangle^{\B}_{ n,\lambda}\right)_{| x=1, y=-2} =
   \langle \TL(D) , \TL(D_1) \rangle^{\TL}_{n,|\lambda|}
\end{equation}
as one checks from the definitions. Since $ \langle \cdot , \cdot \rangle^{\TL}_{n,|\lambda|} $
is non-degenerate, we now deduce from \eqref{wededuce}
that also the restriction
of $ \langle \cdot, \cdot \rangle^{\B}_{n,\lambda} $ to $ \F^{i-1}(\lambda) $
is non-degenerate, as claimed. Hence, for 
$ s \in S_i(\lambda) \cap \F^{i-1}(\lambda) \setminus \{0\}$ there exists an $s_1 \in \F^{i-1}(\lambda) $ such
that $ \langle s, s_1 \rangle^{\B}_{n,\lambda} \neq 0$, which is in contradiction with $ {\bf b}) $.
This proves $ {\bf c}) $.

To show $ {\bf d}) $ we consider the composition
\begin{equation}
  f_i: S_i(\lambda) \subseteq \F^{i}(\lambda) \longrightarrow \F^{i}(\lambda)/ \F^{i-1}(\lambda) \cong 
\Delta^{\TL}_n(|\lambda|+2i)
\end{equation}
where the last isomorphism is given in 
Lemma \ref{homomorphismf}.
In view of Lemma \ref{wehavethefollowingLemma}, we get that $ f_i $ is surjective. On the other
hand, the kernel of $ f_i $ is 
$ S_i(\lambda) \cap \F^{i-1}(\lambda) $
which is zero by $ {\bf c}) $, and so $ f_i $ is also injective.
The Theorem is proved.
\end{dem}

\begin{corollary}
Set as before $ k := \dfrac{n - | \lambda|}{2}$. Then,
with respect to $ \langle \cdot, \cdot \rangle^{\B}_{n,\lambda} $,
there is an orthogonal direct sum decomposition of ${\rm Res} \, \Delta^{\Blob}_n(\lambda) $, as follows
  \begin{equation}
{\rm Res} \,\Delta^{\Blob}_n(\lambda)  = S_1(\lambda) \oplus \ldots \oplus S_k(\lambda) {\color{black}{.}}
  \end{equation}
\end{corollary}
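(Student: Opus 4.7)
The plan is to derive the corollary as a direct assembly of the four parts of Theorem~\ref{withthisnotation} together with Lemma~\ref{homomorphismf}, via induction along the filtration $\F^{-1}(\lambda)\subset\F^{0}(\lambda)\subset\cdots\subset\F^{k}(\lambda)$. I would first establish the $R$-module direct sum decomposition $\F^{i}(\lambda)=\bigoplus_{j=0}^{i}S_j(\lambda)$ for each $i$, and then separately deduce orthogonality from part (b).

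For the inductive step of the direct sum decomposition, the two things to check are $S_i(\lambda)\cap\F^{i-1}(\lambda)=0$ and $S_i(\lambda)+\F^{i-1}(\lambda)=\F^{i}(\lambda)$. The first is precisely part (c) of Theorem~\ref{withthisnotation}. For the second, I would observe that the composition
\begin{equation*}
S_i(\lambda)\hookrightarrow \F^{i}(\lambda)\twoheadrightarrow \F^{i}(\lambda)/\F^{i-1}(\lambda)\stackrel{\overline{\pi}_i}{\cong}\Delta^{\TL}_n(|\lambda|+2i)
\end{equation*}
is already an isomorphism: its restriction to $S_i(\lambda)$ is the map $f_i$ of the proof of part (d), and surjectivity is guaranteed by Lemma~\ref{wehavethefollowingLemma}, which says $\pi_i(f_i^{\lambda})=\pi_i(e_i^{\lambda})$ cyclically generates the target as a $\TRn$-module. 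Consequently the natural surjection $S_i(\lambda)\oplus\F^{i-1}(\lambda)\twoheadrightarrow\F^{i}(\lambda)$ is injective as well and hence an isomorphism of $R$-modules. The base case $\F^{0}(\lambda)=S_0(\lambda)$ is the same argument with $\F^{-1}(\lambda)=0$. Taking $i=k$ then yields the required decomposition of $\Delta^{\Blob}_n(\lambda)=\F^{k}(\lambda)$.

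Orthogonality follows directly from part (b). For $j<i$ we have $S_j(\lambda)\subseteq\F^{j}(\lambda)\subseteq\F^{i-1}(\lambda)$ and hence $\langle S_j(\lambda),S_i(\lambda)\rangle^{\B}_{n,\lambda}=0$. The case $j>i$ follows from the symmetry $\langle a,b\rangle^{\B}_{n,\lambda}=\langle b,a\rangle^{\B}_{n,\lambda}$, which is a standard consequence of the cellular axioms together with the anti-involution $\ast$ (applied to $\overline{C}_{\lambda}=\overline{C}_{\lambda}^{\ast}$ in the definition of the form).

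There is no real obstacle beyond Theorem~\ref{withthisnotation}; the corollary is essentially a packaging result. The only point requiring a brief check is that the splitting $\F^{i}(\lambda)=\F^{i-1}(\lambda)\oplus S_i(\lambda)$ is one of $R$-modules, not merely of $\comu$-vector spaces, and that each summand is free over $R$. This is immediate because all the maps involved ($\pi_i$, $\overline{\pi}_i$, and inclusions) are $R$-linear, and the cell modules $\Delta^{\TL}_n(|\lambda|+2i)$ are free over $R$ by Theorem~\ref{cellblob}, forcing $S_i(\lambda)$ to be free of the same rank.
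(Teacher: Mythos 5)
Your proposal is correct and follows essentially the same route as the paper: the paper's proof combines Lemma~\ref{wehavethefollowingLemma} with part (c) of Theorem~\ref{withthisnotation} to obtain ${\rm Res}\,\F^{i}(\lambda)=S_i(\lambda)\oplus\F^{i-1}(\lambda)$ and then inducts on the filtration, exactly as you do. You merely spell out a few points the paper leaves implicit (surjectivity of the composition onto $\F^{i}(\lambda)/\F^{i-1}(\lambda)$, orthogonality via part (b) and the symmetry of the cellular form, and $R$-freeness of the summands), all of which are fine.
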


\begin{dem}  
Combining Lemma \ref{wehavethefollowingLemma} with 
$ {\bf c}) $ of Theorem \ref{withthisnotation}, we get that
\begin{equation}
{\rm Res}\, \F^{i}(\lambda) = S_i(\lambda) \oplus \F^{i-1}(\lambda)
\end{equation}  
and the Corollary follows by induction on this formula.
\end{dem}  

\medskip
The Corollary allows us to diagonalize $ \langle \cdot, \cdot \rangle^{\B}_{n,\lambda} $ as follows.
The restriction of $ \langle \cdot, \cdot \rangle^{\B}_{n,\lambda} $ to
$ S_i(\lambda) $ defines a $\TRn$-invariant bilinear form on $ S_i(\lambda) $ which by Schur's Lemma,
arguing as in the proof of  Theorem \ref{equivalentforms}, must be equivalent to
$\langle \cdot, \cdot \rangle^{\TL}_{n,|\lambda|+2i} $, that is
for $ s, t \in S_i(\lambda) $ we have that 
\begin{equation}\label{comb1}
  \left\langle s, t \right\rangle^{\B}_{n,\lambda} = c_i(x,y) \left\langle f_i(s), f_i(t)  \right\rangle^{\TL}_{n,|\lambda|+2i}
\end{equation}  
for some $ c_i(x,y) \in R$. On the other hand, by our choice of ground field $\comu$,
we have that $\langle \cdot, \cdot \rangle^{\TL}_{n,|\lambda|+2i} $ is equivalent to the standard bilinear form
on $ \Delta^{\TL}_n(|\lambda|+2i) $
given by the identity matrix. In other words, there is an $R$-basis $ f_1^i, f_2^i, \ldots, f_m^i $ for
$ \Delta^{\TL}_n(|\lambda|+2i) $ such that
\begin{equation}\label{comb2}
\langle f_k^i, f_l^i \rangle^{\TL}_{n,|\lambda|+2i} = \delta_{kl} 
\end{equation}  
where $ \delta_{kl}  $ is the Kronecker delta.
Combining \eqref{comb1} and \eqref{comb2} we then conclude that also 
$ \langle \cdot, \cdot \rangle^{\B}_{n,\lambda} $ can be diagonalized. To be precise, there is an $R$-basis
$ {\mathcal B } = \{ b_1, \ldots, b_m \} $ for $ \Delta^{\Blob}_n(\lambda) $ such that
the matrix $ M_{n, \lambda}^{\B}(x,y) = M_{n, \lambda}^{\B}
:= \left(\langle b_i, b_j \rangle^{\B}_{n,\lambda} \right)_{i, j =1,\ldots, m } $
for $ \langle \cdot, \cdot \rangle^{\B}_{n,\lambda} $ 
has the following form
\begin{equation}\label{matrix}
M_{n, \lambda}^{\B}= \begin{pmatrix}
c_0(x,y)  I_{d_0}  & 0 & \cdots & 0\\
0 & c_1(x,y)  I_{d_1} & \cdots & 0\\
\vdots & \vdots & \ddots & \vdots\\
0 & 0 & \cdots & c_k(x,y)  I_{d_k}
\end{pmatrix}
\end{equation}
where $ d_i  = {\rm dim} \,  \Delta^{\TL}_n(|\lambda|+2i) $ and
$ I_{d_i} $ is the $d_i \times d_i $-identity matrix, whereas the $0$'s are $0$-matrices of appropriate
dimensions.

\section{Determination of $ c_i(x,y)$ }\label{deter}
The purpose of this section is to calculate the $ c_i(x,y)$'s from
\eqref{comb1} and \eqref{matrix}. This is the key calculation of the paper. 
Quite surprisingly, the result turns out to be given in terms of nice expressions involving 
the {\it positive roots} for $W$. 

\medskip
For $ i=1,2, \ldots, $ we define $ \alpha_{x,i}, 
\alpha_{y,i} \in \mathfrak{h}^{\ast} $ via
\begin{equation}\label{roots}
  \alpha_{x,i}:= ix+(i-1)y = i \alpha_{\ese}+ (i-1)\alpha_\te, \, \, \, \, \, \, 
   \alpha_{y,i}:= iy+(i-1)x = i \alpha_{\te}+ (i-1)\alpha_\ese  {\color{black}{.}}
\end{equation}
By our choice of realization for $ W $, 
the following formulas for 
$   \alpha_{x,i} $ and $ \alpha_{y,i} $ hold,
{\color{black}{as one proves by induction using \eqref{W-action}.}}
\begin{equation}\label{thisshowsroots}
\begin{array}{llll}
\alpha_{x,2i-1} =  ({\color{red} s}  {\color{blue} t})^{i-1}  \alpha_{\ese},   &
\alpha_{y,2i} = {\color{blue} t} ({\color{red} s}  {\color{blue} t})^{i-1}  \alpha_{\ese},   &
\alpha_{y,2i-1} =   ({\color{blue} t}  {\color{red} s})^{i -1} \alpha_{\te},  &
 \alpha_{x,2i} = {\color{red} s} ({\color{blue} t}  {\color{red} s})^{i-1}  \alpha_{\te}  {\color{black}{.}}
  \end{array}   
\end{equation}
For a general Coxeter system $ (W,S)$ with realization $ \mathfrak{h} $ we define 
a root $ \beta \in \mathfrak{h}^{\ast}$  to be any element of the form $ \beta = w \alpha $ where
$ w \in W $ and $ \alpha \in  \mathfrak{h}^{\ast}$ is a simple root. The formulas in
\eqref{thisshowsroots}
show that $ \{  \pm  \alpha_{x,i},  \pm  \alpha_{y,i} \, | \, i=1,2, \ldots \} $ is the set of all roots for $W$,
with $ \alpha_{x,1} =x =  \alpha_{\ese} $ and $ \alpha_{y,1} =y =  \alpha_{\te} $ being the simple
roots. 

\medskip
With this notation we can now formulate the following Theorem.
\begin{theorem}\phantomsection\label{cifactorA}
  \begin{description}
\item[a)]  
  Suppose that $ \lambda \ge 0 $ and that $ f_i^{\lambda} $ is as in \eqref{cases}
but with $ j = 0 $, that is 
$ f_i^{\lambda} = f_k^{\lambda} $ where
\begin{equation}
 f_k^{\lambda}= \raisebox{-.38\height}{\includegraphics[scale=0.9]{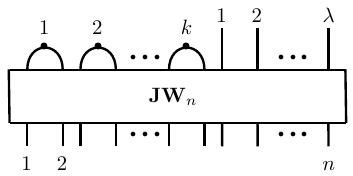}}
      \raisebox{-22\height}{ {\color{black}{.}}}
 \end{equation}
Then we have that
\begin{equation}\label{satisfies the formula}
  \langle f_k^{\lambda}, f_k^{\lambda} \rangle^{\B}_{n,\lambda}  = \dfrac{1}{\binom{n}{k}}
\, \left(  \alpha_{x,\lambda +2 } \,  \alpha_{x,\lambda +3} \cdots \alpha_{x,\lambda +k+1} \right)
\alpha_{y,1} \,  \alpha_{y,2} \cdots \alpha_{y,k}
\end{equation}
where $ n=2k+\lambda$. 
(Note that the
right hand side of \eqref{satisfies the formula} contains $k$ factors $ \alpha_{x,l} $ for consecutive 
$ l $'s, and also $k$ factors $ \alpha_{y,l} $, for consecutive $ l $'s.
For $ k= 0 $, it is set equal to $ 1$. Note also that $ \binom{n}{k}= \dim  \Delta^{\Blob}_n(\lambda) $).
\item[b)]
  Let $ c_i(x,y)  $ be as in \eqref{comb1} and \eqref{matrix}. Then up
  to multiplication by a nonzero scalar in $\comu$, we have that
\begin{equation}
c_i(x,y) = (  \alpha_{x,\lambda +2 }\,  \alpha_{x,\lambda +3} \cdots \alpha_{x,\lambda +i+1} )
  \alpha_{y,1} \,  \alpha_{y,2} \cdots \alpha_{y,i}
\end{equation}
(where, as before, the product is set equal to $1 $ if $i=0$).
\end{description}
 \end{theorem}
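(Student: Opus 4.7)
The plan is to reduce the pairing to a single closed diagrammatic computation and then establish the formula by induction on $k$.

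First, using the description of $\langle\cdot,\cdot\rangle^{\B}_{n,\lambda}$ as the identity coefficient in $D^\ast D_1$ expanded in the diagram basis of $\B^{x,y}_\lambda$, combined with the idempotency of $\JWn$ and its invariance under horizontal reflection, the product $(f_k^\lambda)^\ast f_k^\lambda$ collapses to a single closed diagram $\beta_k^\lambda$: one copy of $\JWn$ with $k$ blobbed cups on the bottom, $k$ blobbed caps on the top, and $\lambda$ propagating lines running to the boundary — exactly the picture (1) of the introduction. The task thus reduces to computing ${\rm coef}_1(\beta_k^\lambda)$ in $\B^{x,y}_\lambda$.

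For part (a) I would proceed by induction on $k$. The base case $k=0$ is immediate: $\beta_0^\lambda=\mathbf{JW}_\lambda$ and its identity coefficient is $1$ by \eqref{conditionsJW}, matching the empty product on the right-hand side of \eqref{satisfies the formula} (with $\binom{\lambda}{0}=1$). For the inductive step I would expand $\JWn$ via the recursion \eqref{firstrecursionJW} and focus on how the rightmost blobbed cup--cap pair interacts with the two resulting terms. Using the blob relations \eqref{eq four} and \eqref{eq five} to resolve that interaction yields two scalar multiples of a smaller closed diagram which, after straightening, is identified with $\beta_{k-1}^\lambda$ on $n-2$ strands. Adding the two contributions and using \eqref{roots}, the scalars combine — together with the Wenzl ratio $(n-1)/n$ and the binomial identity
\begin{equation}
\frac{\binom{n-2}{k-1}}{\binom{n}{k}}=\frac{k(\lambda+k)}{n(n-1)}
\end{equation}
— into precisely the two new root factors $\alpha_{x,\lambda+k+1}$ and $\alpha_{y,k}$, together with the updated prefactor $1/\binom{n}{k}$. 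The induction hypothesis then closes part (a).

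Part (b) follows by reduction to part (a). For general $i$ the element $f_i^\lambda$ has exactly the same local structure as $f_k^\lambda$ but on the sub-block of $|\lambda|+2i$ strands governed by $\mathbf{JW}_{|\lambda|+2i}$, while the remaining $j=n-|\lambda|-2i$ strands pass through as straight vertical lines and decouple from the closed-diagram computation. Hence $\langle f_i^\lambda,f_i^\lambda\rangle^{\B}_{n,\lambda}$ is computed by part (a) applied to the sub-system with $n'=|\lambda|+2i$ and $k'=i$, and combining this with \eqref{comb1}, \eqref{comb2} and the fact that $\langle f_i(f_i^\lambda),f_i(f_i^\lambda)\rangle^{\TL}_{n,|\lambda|+2i}$ is a non-zero rational number in $\comu$ yields the stated expression for $c_i(x,y)$ up to a non-zero scalar.

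The main obstacle is the bookkeeping in the inductive step of part (a): one must verify that the two scalars coming from the Wenzl expansion conspire with the blob relations \eqref{eq one}--\eqref{eq five} to produce root factors with the precise consecutive indices $\lambda+k+1$ and $k$. The parity of $k$ controls which of the parameters $x$, $y$ arises from the straight-strand term versus from the cap-cup term of \eqref{firstrecursionJW}, so either a careful case-split on parity or, better, an alternative recursion for $\JWn$ more adapted to blobbed boundaries (as foreshadowed at the end of Section~\ref{restriction}) will be required to make the indices come out uniformly.
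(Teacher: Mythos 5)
Your reduction of the pairing to the single closed diagram $\beta_{k,\lambda}$, your base case, and your treatment of part (b) (the $2(k-i)$ unused strands only contribute closed unmarked loops, i.e.\ a nonzero scalar $(-2)^{k-i}$, so (b) does follow from (a) and the definitions) all agree with the paper. The gap is the inductive step of part (a). You induct on $k$ with $\lambda$ fixed and assert that a single application of \eqref{firstrecursionJW}, resolved against the rightmost blobbed cup--cap pair by the blob relations, yields two scalar multiples of $\beta_{k-1,\lambda}$ whose coefficients combine into $\alpha_{x,\lambda+k+1}\alpha_{y,k}$. That is not what the expansion gives. If the recursion is applied at the strand bordering the $\lambda$ propagating lines, its first term (the one with $\JWnn$ and a free last strand) closes up to $\beta_{k,\lambda-1}$ --- same number of blobbed cups, one fewer propagating line --- and not to any multiple of $\beta_{k-1,\lambda}$; if instead it is applied at the left, adjacent to a blobbed cup, the first term produces a closure of $\JWnn$ carrying a doubly-blobbed return arc, which is not of the form $\beta_{k',\lambda'}$ at all. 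The second term, $\tfrac{n-1}{n}$ times $\JWnn\,\UU_{n-1}\,\JWnn$ closed off, does not collapse under \eqref{eq four}--\eqref{eq five} to a scalar multiple of a smaller $\beta$: evaluating it is exactly the hard part, and in the paper it forces the fully expanded recursion \eqref{secondrecursion}, a parity analysis of which arc positions $j\le 2k$ survive the blobbed closure (odd $j$ giving $y$, even $j$ giving $x$, $j\ge 2k$ giving $0$), the auxiliary closed diagram $\beta_{k,\lambda}^{\prime}$, and a second full expansion to get $\beta_{k,\lambda}^{\prime}=\tfrac{k}{n-1}\alpha_{y,k}\beta_{k-1,\lambda}$.

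The recursion that actually holds is therefore the two-variable one $\beta_{k,\lambda}=\beta_{k,\lambda-1}+\tfrac{k^{2}(\alpha_{y,k})^{2}}{n(n-1)}\beta_{k-1,\lambda}$ of \eqref{Pascalusing}, and the induction runs on $k+\lambda$, not on $k$. In particular the factor $\alpha_{x,\lambda+k+1}$ is not produced by any local diagrammatic move: it arises only from the linear identity $n\alpha_{x,\lambda+1}+k\alpha_{y,k}=(n-k)\alpha_{x,\lambda+k+1}$ of \eqref{firstcalA}, which mixes the two branches of the recursion; note also that the coefficient in front of $\beta_{k-1,\lambda}$ is $(\alpha_{y,k})^{2}$, not $\alpha_{x,\lambda+k+1}\alpha_{y,k}$ as your one-step reduction would require. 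Your binomial identity is correct, but it is forced by the answer rather than derived from the diagrammatics, so the step you describe as ``bookkeeping'' is in fact the main computation of the theorem, and as formulated your reduction of $\beta_{k,\lambda}$ directly to $\beta_{k-1,\lambda}$ is unjustified (and, taken literally, false for the expansion via \eqref{firstrecursionJW}).
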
  
\begin{dem}
  We first prove $ {\bf a}) $. For simplicity, we set $ \beta_{k, \lambda} :=
  \langle f_k^{\lambda}, f_k^{\lambda} \rangle^{\B}_{n,\lambda} $ and must
therefore 
  show that $ \beta_{k, \lambda} $
  satisfies the formula given in \eqref{satisfies the formula}. 
By definition $ \beta_{k, \lambda} $ is the coefficient of $ 1$, 
or the coefficient of $ \emptyset $, if $ \lambda >0  $ or if $ \lambda =0  $, 
of the following diagram
 \begin{equation}
 \raisebox{-.38\height}{\includegraphics[scale=0.9]{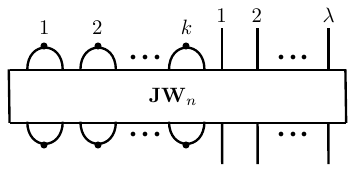}} 
      \raisebox{-23\height}{ {\color{black}{.}}}
 \end{equation}
For example, in view of \eqref{JWrectangle} we have that 
\begin{equation}\label{inviewbeta}
\beta_{1,0}=
   \raisebox{-.49\height}{\includegraphics[scale=1]{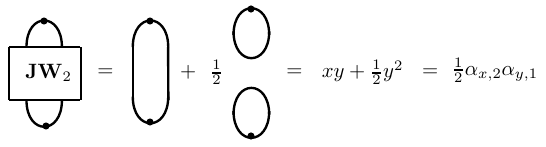}} \! \! \! \! 
      \raisebox{-2\height}{ {\color{black}{.}}}
\end{equation}

We claim that $ \beta_{k, \lambda}$ satisfies the following 'deformation' of the Pascal triangle
recursive formula
\begin{equation}\label{Pascalusing}
  \beta_{1, 0} = \frac{1}{2} \alpha_{x,2}\alpha_{y,1}, \, \, \, \, 
  \beta_{0, 1} = 1, \, \, \, \,
  \beta_{k, \lambda} = \beta_{k, \lambda-1} + \frac{k^2 (\alpha_{y, k})^2}{n(n-1)}  \beta_{k-1, \lambda}
  {\color{black}{.}}
  \end{equation}
From this the proof of the formula \eqref{satisfies the formula}
in ${\bf a})$
follows by induction on $ N:= k +\lambda$
as follows. The basis of the induction $ N=1 $ is immediate from \eqref{inviewbeta} and
the definitions, so let us assume that 
\eqref{satisfies the formula} holds for $  N-1 $ and check it for $N$, using 
\eqref{Pascalusing}. We get
\begin{equation}\label{firstcal}
  \begin{aligned}
    \beta_{k, \lambda} =
\dfrac{1}{\binom{n-1}{k}}
\, \left(  \alpha_{x,\lambda +1 } \,  \alpha_{x,\lambda +2} \cdots \alpha_{x,\lambda +k} \right)
\alpha_{y,1} \,  \alpha_{y,2} \cdots \alpha_{y,k} \\ +
 \dfrac{k^2 (\alpha_{y, k})^2}{n(n-1) \binom{n-2}{k-1}} 
\, \left(  \alpha_{x,\lambda +2 } \,  \alpha_{x,\lambda +3} \cdots \alpha_{x,\lambda +k} \right)
\alpha_{y,1} \,  \alpha_{y,2} \cdots \alpha_{y,k-1} \\ = \left(
\dfrac{\alpha_{x,\lambda +1} }{\binom{n-1}{k}}+ \dfrac{k^2 \alpha_{y, k}}{n(n-1) \binom{n-2}{k-1}} 
 \right)
\left(  \alpha_{x,\lambda +2} \cdots \alpha_{x,\lambda +k} \right)
\alpha_{y,1} \,  \alpha_{y,2} \cdots \alpha_{y,k} \\
= \dfrac{1}{(n-k)\binom{n}{k}}\left(
n \alpha_{x,\lambda +1} + k \alpha_{y, k}
 \right)
\left(  \alpha_{x,\lambda +2} \cdots \alpha_{x,\lambda +k} \right)
\alpha_{y,1} \,  \alpha_{y,2} \cdots \alpha_{y,k}    {\color{black}{.}}
  \end{aligned}    
\end{equation}
On the other hand, using $ n=2k +\lambda $ we get that
\begin{equation}\label{firstcalA}
  \begin{aligned}
    n \alpha_{x,\lambda +1} + k \alpha_{y, k} = (2k +\lambda) \alpha_{x,\lambda +1} + k \alpha_{y, k} \\
    = (2k +\lambda) \left(\lambda+1) x + \lambda y \right) +
    k\left(ky + (k-1)x \right) \\
    = (\lambda+k) \left( (\lambda+k+1)x+ (\lambda+k)\right)y
    = (\lambda+k) \alpha_{x, \lambda+k+1} \\ = (n-k) \alpha_{x, \lambda+k+1}   {\color{black}{.}}
\end{aligned}    
\end{equation}
Inserting this in the last expression of \eqref{firstcal} we get that
\begin{equation}
  \beta_{k, \lambda} =
\dfrac{1}{\binom{n}{k}}
\, \left(  \alpha_{x,\lambda +2 } \,  \alpha_{x,\lambda +3} \cdots \alpha_{x,\lambda +k+1} \right)
\alpha_{y,1} \,  \alpha_{y,2} \cdots \alpha_{y,k}    
\end{equation}
proving the inductive step.

\medskip
In order to prove the claim \eqref{Pascalusing}
we first introduce the following diagrammatic notation for $ \beta_{k, \lambda}$
  \begin{equation}
 \beta_{k, \lambda} =   \raisebox{-.38\height}{\includegraphics[scale=0.9]{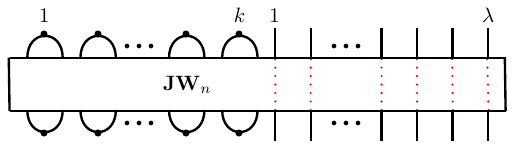}} 
  \end{equation}
  where the numbers above the diagram indicate the cardinalities of the arcs and the vertical
  lines. 
We next recall the following recursive formula for calculating $ \JWn$, that was already alluded to above
  \vspace{-0.2cm} 
\begin{equation}\label{secondrecursion}
  \raisebox{-.42\height}{\includegraphics[scale=0.9]{JWrecursion1A.pdf}} + \, \, \, 
 \mathlarger{\sum}_{j=1}^{n-1}\,  \dfrac{j}{n}\, \,
    \raisebox{-.3\height}{\includegraphics[scale=0.9]{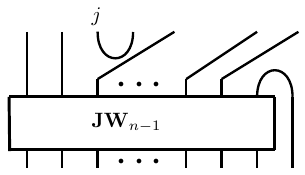}} 
\end{equation}
where the number $j$ indicates the position of the arc.
As a matter of fact, \eqref{secondrecursion} follows from by repeated applications of
\eqref{firstrecursionJW}. 
Let us use it to show the recursive formula \eqref{Pascalusing} for $ \beta_{k, \lambda}$. 

\medskip
Concatenating on top and on bottom 
with the blobbed arcs, the first term of 
\eqref{secondrecursion} becomes
  \begin{equation}
   \raisebox{-.4\height}{\includegraphics[scale=0.9]{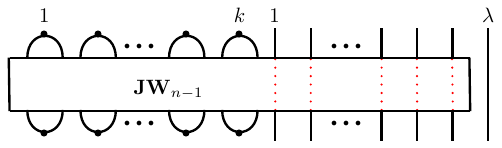}} =  \beta_{k, \lambda-1} {\color{black}{.}}
  \end{equation}
  We next consider the contribution to $ \beta_{k, \lambda}  $ from the terms of the sum 
in \eqref{secondrecursion}. 
We first observe that concatenating on top and on bottom 
with the blobbed arcs, only 
the terms in the sum in \eqref{secondrecursion} where $ j \le 2k $ can contribute to
$ \beta_{k, \lambda}  $ since otherwise the concatenation has the form 
  \begin{equation}\label{sinceotherwise}
   \raisebox{-.4\height}{\includegraphics[scale=1.0]{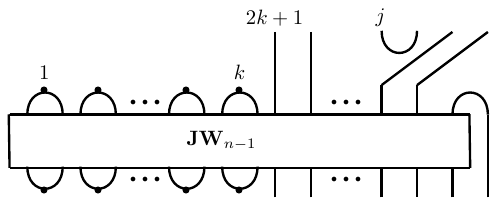}}
  \end{equation}
  in which the $j$'th northern point is connected to another northern point.

We next consider the 
contributions for $ j=1,3,\ldots, 2k-1 $. Apart from the coefficients, they are of the form 
indicated below (in the cases  $ j=1 $ and $j=3$). 
  \begin{equation}
   \raisebox{-.4\height}{\includegraphics[scale=1.0]{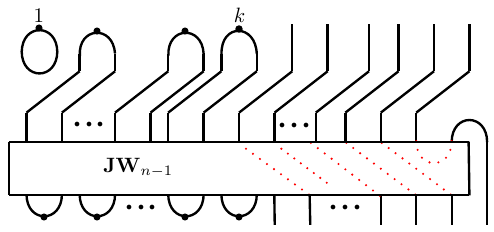}}
  \end{equation}
  \begin{equation}
   \raisebox{-.4\height}{\includegraphics[scale=1.0]{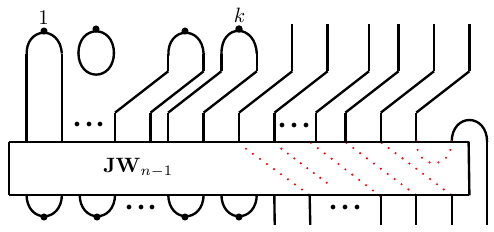}}
  \end{equation}
and are in fact all equal to $ y \beta_{k, \lambda}^{\prime} $ where $\beta_{k, \lambda}^{\prime}  $ is the diagram 
 \begin{equation}\label{betaprime}
   \raisebox{-.4\height}{\includegraphics[scale=1.0]{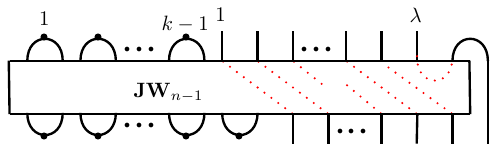}}
      \raisebox{-25\height}{ {\color{black}{.}}}
 \end{equation}
 In these diagrams, the dashed lines once again refer to the summands of the Jones-Wenzl
 elements where the points are connected as indicated.

We then consider the contributions for $ j=2,4,\ldots, 2k-2 $. 
They are all of the form indicated below (for $ j=2 $ and $ j=4$)
 \begin{equation}
   \raisebox{-.4\height}{\includegraphics[scale=1.0]{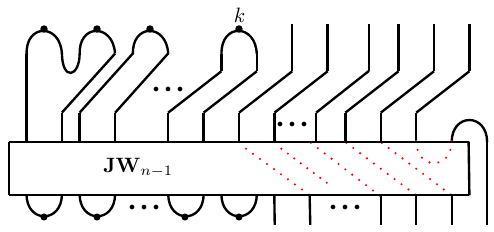}}
  \end{equation}
 \begin{equation}
   \raisebox{-.4\height}{\includegraphics[scale=1.0]{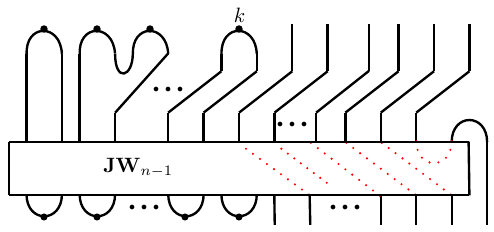}}
  \end{equation}
 and are all equal to $ x \beta_{k, \lambda}^{\prime}  $ where $ \beta_{k, \lambda}^{\prime}  $
 is as before in \eqref{betaprime}.
Finally, for $ j=2k $ there is no contribution since the corresponding diagram is as follows
 \begin{equation}
   \raisebox{-.4\height}{\includegraphics[scale=1.0]{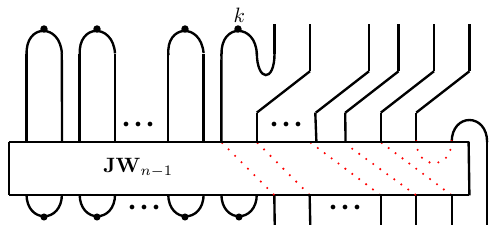}}
      \raisebox{-39\height}{ {\color{black}{.}}}
 \end{equation}
 Summing up and taking into account the coefficients
 $ \frac{j}{n}$ appearing in \eqref{secondrecursion} we get from all this that 
 \begin{equation}\label{porfinA}
   \beta_{k, \lambda}  = \beta_{k, \lambda-1} + k \dfrac{(k-1) x+ ky}{n}  \beta_{k, \lambda}^{\prime}
   = \beta_{k, \lambda-1} + \dfrac{k}{n} \alpha_{y, k}   \beta_{k, \lambda}^{\prime}   {\color{black}{.}}
 \end{equation}
 We are therefore faced with the problem of calculating $  \beta_{k, \lambda}^{\prime} $, that is 
  \begin{equation}
   \raisebox{-.4\height}{\includegraphics[scale=1.0]{betaE.pdf}}
      \raisebox{-24\height}{ {\color{black}{.}}}
  \end{equation}
  For this we first observe that by the symmetry properties of the Jones-Wenzl idempotents,
  we have that $ \beta_{k, \lambda}^{\prime} $ is equal to 
  \begin{equation}\label{betaprimeA}
( \beta_{k, \lambda}^{\prime})^{\ast}=   \raisebox{-.4\height}{\includegraphics[scale=1.0]{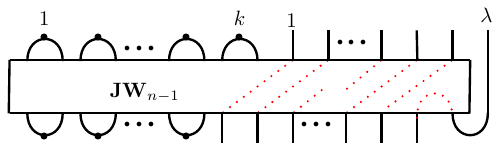}}
      \raisebox{-22\height}{ {\color{black}{.}}}
  \end{equation}
  We now expand $ \JWnn $ in \eqref{betaprimeA} using \eqref{secondrecursion}.
  The first term of \eqref{secondrecursion} does not contribute to 
  \eqref{betaprimeA} so let us consider the contribution of the $j$'th term of the
  sum of \eqref{secondrecursion}, where arguing as
  in \eqref{sinceotherwise} we see that only $ j \le 2k $ can contribute.
  Once again, there is a dependency on the parity of $ j $. If $ j=1,3, \ldots, 2k-1 $
  the contributions are of the form indicated below (for $j=1,3 $ and disregarding the coefficients)
  \begin{equation}
 \raisebox{-.4\height}{\includegraphics[scale=1.0]{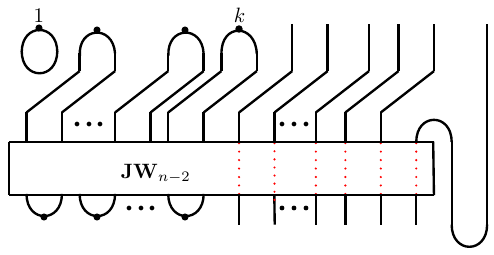}}
  \end{equation}
  \begin{equation}
 \raisebox{-.4\height}{\includegraphics[scale=1.0]{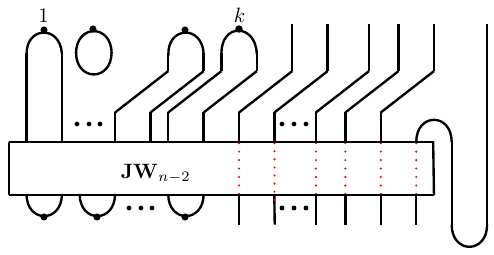}}
  \end{equation}
and are in fact all equal to $ y \beta_{k-1, \lambda} $. 
Similarly, for $ j = 2, 4, \ldots, 2k-2$ we get contributions of the form
  \begin{equation}
 \raisebox{-.4\height}{\includegraphics[scale=1.0]{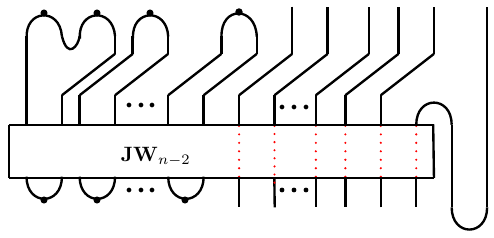}}
  \end{equation}
  \begin{equation}
 \raisebox{-.4\height}{\includegraphics[scale=1.0]{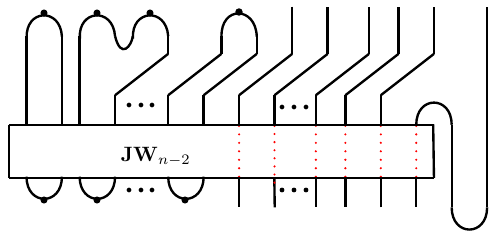}}
  \end{equation}
  all equal to $ x \beta_{k-1, \lambda} $. Once again, there is no contribution for $  j = 2k$. 
  Hence, taking into account the coefficients
  $ \frac{j}{n-1}$, we get that 
\begin{equation}\label{porfin}
  \beta_{k, \lambda}^{\prime} = 
\dfrac{k}{n-1} \alpha_{y, k}   \beta_{k-1, \lambda}   {\color{black}{.}}
\end{equation}  
Combining \eqref{porfin} and \eqref{porfinA} we arrive at the promised recursive formula 
\eqref{Pascalusing} for the $ \beta_{k, \lambda}$'s. This proves $ {\bf a})$.
The proof of $ {\bf b})$ is immediate from $ {\bf a})$ and the definitions.
\end{dem}

\medskip
Using Theorem \ref{cifactorA} we can now calculate the matrix $M_{n, \lambda}^{\B} $
for $ \langle \cdot, \cdot \rangle^{\B}_{n,\lambda} $, see \eqref{matrix}.  
We illustrate it on $M_{5, 1}^{\B} $. Recall that the diagram basis for
$\Delta_{5}^{\B}(1) $ is given in {\eqref{examplehalf}} and so we get from the Theorem that 
\begin{equation}\label{matrixAA}
M_{5, 1}^{\B}= \begin{pmatrix}
 I_{5}  & 0  & 0\\
0 & \alpha_{x,3} \alpha_{y,1}  I_{4} & 0\\
0 & 0 & \alpha_{x,3} \alpha_{x,4}  \alpha_{y,1} \alpha_{y,2}   I_1
\end{pmatrix}
\end{equation}
where the $0$'s are $0$-matrices of appropriate
dimensions.

\medskip
We now consider the situation where $ \lambda < 0$. We have the following Theorem.

\begin{theorem}\phantomsection\label{cifactorB}
  \begin{description}
\item[a)]  
  Suppose that $ \lambda < 0 $ and that $ f_i^{\lambda} $ is as in \eqref{cases}
but with $ j = 0 $, that is 
$ f_i^{\lambda} = f_k^{\lambda} $ where
\begin{equation}
 f_k^{\lambda}= \raisebox{-.38\height}{\includegraphics[scale=0.9]{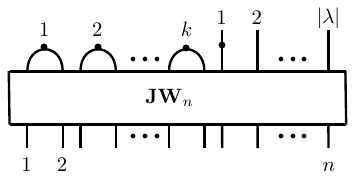}}
      \raisebox{-22\height}{ {\color{black}{.}}}
\end{equation}
Then we have that
\begin{equation}\label{satisfies the formulaA}
  \langle f_k^{\lambda}, f_k^{\lambda} \rangle^{\B}_{n,\lambda}  = \dfrac{1}{\binom{n}{k}}
\, \left(  \alpha_{x,1 } \,  \alpha_{x,2} \cdots \alpha_{x,k+1} \right)
\alpha_{y,1+|\lambda|} \,  \alpha_{y,2+|\lambda|} \cdots \alpha_{y,k+|\lambda|}
\end{equation}
where $ n=2k+|\lambda|$. 
(Note that the
right hand side of \eqref{satisfies the formula} contains $k+1$ factors $ \alpha_{x,l} $ for consecutive 
$ l $'s, but $k$ factors $ \alpha_{y,l} $, for consecutive $ l $'s.
For $ k= 0 $, it is set equal to $ \alpha_{x,1}=x$.
Note also that $ \binom{n}{k}= \dim  \Delta^{\Blob}_n(\lambda) $).
\item[b)]
  Let $ c_i(x,y)  $ be as in \eqref{comb1} and \eqref{matrix}. Then up
  to multiplication by a nonzero scalar in $\comu$, we have that
  \begin{equation}
    c_i(x,y) =
 \left(  \alpha_{x,1 } \,  \alpha_{x,2} \cdots \alpha_{x,i+1} \right)
\alpha_{y,1+|\lambda|} \,  \alpha_{y,2+|\lambda|} \cdots \alpha_{y,i+|\lambda|}
\end{equation}
(where, as before, the product is set equal to $x $ if $i=0$).
\end{description}
 \end{theorem}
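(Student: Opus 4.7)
The plan is to adapt the strategy of the proof of Theorem \ref{cifactorA} to the case $\lambda < 0$. Set $\gamma_{k, \mu} := \langle f_k^{\lambda}, f_k^{\lambda}\rangle^{\B}_{n, \lambda}$ with $\mu := |\lambda| \ge 1$ and $n = 2k + \mu$, so that proving \eqref{satisfies the formulaA} amounts to verifying that $\gamma_{k,\mu}$ equals the prescribed product. The induction will be on $k$, with base case $k = 0$: here $f_0^{\lambda}$ is a configuration of $\mu$ vertical lines with a single blob on the leftmost one, and the coefficient of $\UU_0$ in the closure is immediately seen to equal $x = \alpha_{x,1}$, in agreement with \eqref{satisfies the formulaA} under the stated convention for the empty product.

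For the inductive step we derive a Pascal-type recursion for $\gamma_{k,\mu}$ analogous to \eqref{Pascalusing}. Apply the Jones-Wenzl recursion \eqref{secondrecursion} to the central $\JWn$ appearing in the diagrammatic expression for $\gamma_{k,\mu}$, and concatenate the resulting terms with the blobbed cup and cap arcs of $(f_k^{\lambda})^{\ast} f_k^{\lambda}$. Compared with the proof of Theorem \ref{cifactorA}, two new features arise. First, the leftmost propagating line carries a blob on top and on bottom, so terms in the expansion that couple it to a cap or cup produce arcs carrying extra blobs, which are absorbed using $\UU_0^2 = x \UU_0$. Second, the same parity analysis as before, with odd-indexed arcs $j$ contributing factors of $y$ and even-indexed arcs contributing factors of $x$, now undergoes a shift caused by the extra blob, producing the root $\alpha_{y, k+\mu}$ in place of $\alpha_{y, k}$. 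A second application of \eqref{secondrecursion}, mirroring the derivation of \eqref{porfin}, then expresses the resulting ``primed'' intermediate quantity as a multiple of $\gamma_{k-1, \mu}$.

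Once the recursion is established, the inductive verification of \eqref{satisfies the formulaA} reduces to checking the algebraic identity
\begin{equation*}
n \alpha_{x, 1} + k \alpha_{y, k + \mu} = (n - k) \alpha_{x, k + 1},
\end{equation*}
which follows directly from $n = 2k + \mu$ and \eqref{roots}, and plays here the role that \eqref{firstcalA} played in the proof of Theorem \ref{cifactorA}. Part \textbf{b)} is then immediate from part \textbf{a)} combined with the block-diagonal description \eqref{matrix} of $M^{\B}_{n, \lambda}$ and the equivalence \eqref{comb1}, from which $c_i(x,y)$ is determined up to a nonzero scalar.

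The main obstacle is the careful bookkeeping of the extra blob on the leftmost propagating line: one must verify that its interaction with the various terms of the Jones-Wenzl expansion produces precisely the extra factor $\alpha_{x, 1}$ and the shift $\alpha_{y, k} \mapsto \alpha_{y, k + \mu}$ that distinguish \eqref{satisfies the formulaA} from \eqref{satisfies the formula}. The boundary case $\mu = 1$, in which the only propagating line is the blobbed one, will also require a little extra care in the setup of the recursion, as removing this last propagating line does not return a $\gamma$-type quantity but rather contributes a factor of $x$ through the blob absorption.
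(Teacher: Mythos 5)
Your overall strategy is the right one, and indeed it is what the paper intends (the published proof just says the argument is essentially that of Theorem \ref{cifactorA} and leaves the details to the reader); your base case $k=0$ and your remark that the $|\lambda|=1$ boundary degenerates to $x$ times a $\lambda=0$ quantity are both correct. However, the quantitative core of your sketch is wrong, and the identity you propose to pivot the induction on, while numerically true, is not the one that closes it. Write $\gamma_{k,\mu}$ for the left side of \eqref{satisfies the formulaA}, $\mu=|\lambda|$, $n=2k+\mu$. If, as you describe, the first term of the Jones--Wenzl expansion \eqref{secondrecursion} yields $\gamma_{k,\mu-1}$ (with $\gamma_{k,0}:=x\beta_{k,0}$ at the boundary), then the coefficient $C$ in a recursion $\gamma_{k,\mu}=\gamma_{k,\mu-1}+C\,\gamma_{k-1,\mu}$ is forced by \eqref{satisfies the formulaA} itself: cancelling the common factor $(\alpha_{x,1}\cdots\alpha_{x,k})(\alpha_{y,\mu+1}\cdots\alpha_{y,\mu+k-1})$ one finds the unique solution
\begin{equation*}
C=\frac{k^2\,(\alpha_{x,k+1})^2}{n(n-1)},
\end{equation*}
not $k^2(\alpha_{y,k+\mu})^2/(n(n-1))$ as your ``shift $\alpha_{y,k}\mapsto\alpha_{y,k+\mu}$'' would give; already for $k=\mu=1$, $n=3$, your version produces $x(x^2+10xy+7y^2)/6$ instead of the correct $\tfrac13 x\,\alpha_{x,2}\,\alpha_{y,2}$. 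Your proposed shift is also impossible diagrammatically: exactly as in the $\lambda\ge 0$ case, only the terms $j\le 2k$ of \eqref{secondrecursion} can survive the closure (an arc among the propagating strands kills the coefficient of $\UU_0$ just as it killed the coefficient of $1$), so the $y$-part of the resulting coefficient is bounded by $\sum_{j\ \mathrm{odd},\, j\le 2k-1} j=k^2$ and a factor $\alpha_{y,k+\mu}=(k+\mu)y+(k+\mu-1)x$ cannot arise there.

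What actually changes in the $\lambda<0$ case is different: the term $j=2k$, which contributed $0$ for $\lambda\ge 0$, now contributes a factor $x$, because the arc meets the blobbed leftmost propagating line and the two blobs merge via $\UU_0^2=x\UU_0$, and a blob on a propagating line is no longer fatal since one extracts the coefficient of $\UU_0$ rather than of $1$. This turns the sum-contribution $\tfrac{k}{n}\alpha_{y,k}=\tfrac{k}{n}\bigl(ky+(k-1)x\bigr)$ of the positive case into $\tfrac{k}{n}\bigl(ky+(k+1)x\bigr)=\tfrac{k}{n}\alpha_{x,k+1}$ (and likewise $\tfrac{k}{n-1}\alpha_{x,k+1}$ in the primed step), giving the recursion displayed above; the extra overall factor $\alpha_{x,1}=x$ enters only through the base case and the $\mu=1$ degeneration, not through the $j$-sum. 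Consequently the analogue of \eqref{firstcalA} that you must verify is
\begin{equation*}
n\,\alpha_{y,|\lambda|}+k\,\alpha_{x,k+1}=(n-k)\,\alpha_{y,|\lambda|+k},
\end{equation*}
rather than $n\,\alpha_{x,1}+k\,\alpha_{y,k+|\lambda|}=(n-k)\,\alpha_{x,k+1}$. With these corrections the induction closes and part \textbf{b)} follows from \eqref{comb1} and \eqref{matrix} exactly as you say.
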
  
\begin{dem}
  The proof is essentially the same as the proof of Theorem \ref{cifactorA}. We leave the details
  to the reader.
\end{dem}

\medskip
Let us illustrate Theorem \ref{cifactorB} on $M_{5, -1}^{\B} $ and $M_{5, -3}^{\B} $. Recall that
the diagram
basis for
$\Delta_{5}^{\B}(-1) $ is obtained from
{\eqref{examplehalf}} by marking the leftmost propagating
line of each diagram, whereas the diagram basis for $\Delta_{5}^{\B}(-3) $ is given
in {\eqref{examplehalfneg}}. We have 
\begin{equation}\label{matrixA}
M_{5, -1}^{\B}= \begin{pmatrix}
\alpha_{x,1} I_{5}  & 0  & 0\\
0 & \alpha_{x,1} \alpha_{x,2} \alpha_{y,2}  I_{4} & 0\\
0 & 0 & \alpha_{x,1} \alpha_{x,2}  \alpha_{x,3}  \alpha_{y,2} \alpha_{y,3}   I_1
\end{pmatrix}, \, \, \,
M_{5,- 3}^{\B}= \begin{pmatrix}
\alpha_{x,1}  I_{4}  & 0  \\
 0 & \alpha_{x,1} \alpha_{x,2}  \alpha_{y,4}  I_{1}  
\end{pmatrix}  {\color{black}{.}}
\end{equation}

\section{Characterization of $ c_i(x,y)$ in terms of Bruhat order on $W$}\label{charof}
We saw in the Theorems \ref{cifactorA} and \ref{cifactorB} that 
$ c_i(x,y) $ has a factorization in terms of roots for $W$. In this section we describe
the reflections in $ W$ that correspond to these roots. It turns out that these reflections can be
described nicely in terms of the Bruhat order on $W$. 

\medskip
Let $ \beta= w \alpha$ be a root for $ W$ where $ \alpha $ is a simple root. 
Then we define
%\cancel{\color{black}{in a unique way}}
the {\it reflection} $ s_{\beta} \in W $ associated with
$ \beta $ via the formula
\begin{equation}
s_{\beta} := w s_{\alpha} w^{-1}
\end{equation}
where $ s_{\alpha}  \in S$ is the generator associated with $\alpha$.
{\color{black}{It is shown in section 5.7 of \cite{H1} that $ s_{\beta} $
only depends on $ \beta$, not 
on the particular choices of $ w $ and $ \alpha$ such that $ \beta = w \alpha$.}}

\medskip
For our $W$, the reflections $   s_{x,i} $ and $ s_{y,i} $ for
the roots $   \alpha_{x,i} $ and $\alpha_{y,i} $ are given by the formulas of the following Lemma.
\begin{lemma}\label{formulasforroots}
Let $  \alpha_{x,i} $ and $  \alpha_{y,i} $ be the positive roots for $ W $ introduced in \eqref{roots}
and let $  s_{x,i} $ and $  s_{y,i} $ be the associated reflections, for
$ i=1,2, \ldots $ 
Then we have that
\begin{equation}\label{thisshowsrootsref}
s_{x,i} =  ({\color{red} s}  {\color{blue} t})^{i-1} {\color{red} s}, \, \, \, \, \, \, \, \, \, \, \, \, 
s_{y,i} = {\color{blue} t} ({\color{red} s}  {\color{blue} t})^{i-1}     {\color{black}{.}}
\end{equation}
\end{lemma}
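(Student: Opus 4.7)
The plan is to apply the definition $s_\beta = w s_\alpha w^{-1}$ directly to each of the four expressions for the roots listed in \eqref{thisshowsroots}, splitting $s_{x,i}$ and $s_{y,i}$ according to the parity of $i$. The only algebraic ingredient needed is the braid-type identity
\begin{equation}\label{braidshift}
s(ts)^{k} = (st)^{k}s, \qquad (ts)^{k}t = t(st)^{k}
\end{equation}
in $W$, which is immediate by induction on $k$ using $s^2 = t^2 = 1$, together with the inverse formula $(st)^{-k} = (ts)^{k}$.

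First I would handle the case $s_{x,i}$. For $i = 2j-1$ odd, we have $\alpha_{x,2j-1} = (st)^{j-1}\alpha_{\ese}$, so
\begin{equation}
s_{x,2j-1} \,=\, (st)^{j-1}\, s\, (st)^{-(j-1)} \,=\, (st)^{j-1}\, s\, (ts)^{j-1} \,=\, (st)^{j-1}(st)^{j-1}s \,=\, (st)^{2j-2}s,
\end{equation}
which is $(st)^{i-1}s$ as claimed. For $i = 2j$ even, $\alpha_{x,2j} = s(ts)^{j-1}\alpha_{\te}$, and a parallel computation
\begin{equation}
s_{x,2j} \,=\, s(ts)^{j-1}\, t\, (st)^{j-1}s \,=\, s\cdot t(st)^{j-1}\cdot (st)^{j-1}s \,=\, (st)^{2j-1}s
\end{equation}
again recovers $(st)^{i-1}s$. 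The cases for $s_{y,i}$ are symmetric: for $i = 2j$, $\alpha_{y,2j} = t(st)^{j-1}\alpha_{\ese}$ yields
\begin{equation}
s_{y,2j} \,=\, t(st)^{j-1}\, s\, (ts)^{j-1}t \,=\, t(st)^{2j-2}\, st \,=\, t(st)^{2j-1},
\end{equation}
and for $i = 2j-1$, $\alpha_{y,2j-1} = (ts)^{j-1}\alpha_{\te}$ yields $s_{y,2j-1} = (ts)^{j-1}t(st)^{j-1} = t(st)^{2j-2}$, both agreeing with $t(st)^{i-1}$.

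The main obstacle is essentially bookkeeping: keeping the two parity cases straight, correctly inverting the alternating words, and applying \eqref{braidshift} on the correct side. There is no conceptual difficulty, and the entire proof reduces to four short computations of the type displayed above.
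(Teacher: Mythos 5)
Your computation is correct and is exactly what the paper's one-line proof ("immediate from \eqref{roots} and the definitions") has in mind: you express each root as $w\alpha$ via \eqref{thisshowsroots} and conjugate with the identities $s(ts)^k=(st)^k s$ and $(ts)^k t=t(st)^k$. All four parity cases check out, so this is the same argument, merely written out in full.
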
  
\begin{proof}
This is immediate from \eqref{roots} and the definitions.
\end{proof}  

We now have the following Lemma.
\begin{lemma}\label{lemma43}
  Let $ v \in \Lambda_w $ and let $ \lambda := \varphi(v) \in \Lambda_{ {\color{black}{\pm (n-1)}}} $ where
  $ \varphi: \Lambda_w \rightarrow \Lambda_{ {\color{black}{\pm (n-1)}}} $ is the function defined in
  Lemma \ref{usualLength}. Set $ k = \dfrac{n-1- |\lambda|}{2}$.
  \begin{description}
\item[a)] Suppose that $ \lambda \ge 0$. Then the set of reflections $ s_{\alpha} $ in $W $ satisfying
  $ v < s_{\alpha } v \le w $ is exactly 
\begin{equation}
  \{
  s_{x,\lambda +2 }, \,  s_{x,\lambda +3}, \,   \cdots  \, , s_{x,\lambda +k+1} \} \cup \{
  s_{y,1}, \,  s_{y,2}, \cdots, s_{y,k} \} 
\end{equation}  
obtained by transforming the roots of the factors of \eqref{satisfies the formula}
to reflections.

\item[b)] Suppose that $ \lambda < 0$.
Then the set of reflections $ s_{\alpha} $ in $W $ satisfying
  $ v < s_{\alpha } v \le w $ is exactly 
\begin{equation}
  \{
  s_{x,1 }, \,  s_{x,2}, \,   \cdots  \, , s_{x,k+1} \} \cup \{
  s_{y,1 + | \lambda| }, \,  s_{y, 2 + | \lambda|}, \cdots, s_{y,k + | \lambda|} \} 
\end{equation}  
obtained by 
transforming the roots of the factors of \eqref{satisfies the formulaA}  
to reflections.
\end{description}
\end{lemma}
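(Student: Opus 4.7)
The plan is to reduce the assertion to an explicit length computation in the infinite dihedral group, exploiting two simple facts. First, in $W = \langle s, t \mid s^2 = t^2 = e \rangle$ every non-identity element has a unique reduced expression (an alternating word in $s$ and $t$), so Bruhat order collapses to a chain-like relation: $u \le v$ iff $l(u) < l(v)$, or $u = v$. Second, for two reduced alternating words $u_1, u_2 \in W$, the length of their product is $l(u_1) + l(u_2)$ when the last letter of $u_1$ differs from the first letter of $u_2$, and $\bigl| l(u_1) - l(u_2) \bigr|$ when they agree (by full boundary cancellation, which proceeds all the way until one side is exhausted). I would first verify these two principles.

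By Lemma \ref{formulasforroots} the reflections of $W$ are exactly $s_{x,i} = (st)^{i-1} s$ and $s_{y,i} = t(st)^{i-1}$ for $i \ge 1$, each of length $2i - 1$, with $s_{x,i}$ beginning and ending in $s$ and $s_{y,i}$ beginning and ending in $t$. Combined with the description of $v = \varphi(\lambda)$ from Lemma \ref{usualLength}, the cancellation rule yields closed formulas for $l(s_\alpha v)$ in each sub-case. In case (a), where $v$ begins with $s$ and $l(v) = \lambda + 1$, one obtains $l(s_{x,i} v) = \bigl| 2i - \lambda - 2 \bigr|$ and $l(s_{y,i} v) = 2i + \lambda$; in case (b), where $v$ begins with $t$ and $l(v) = |\lambda|$, one obtains $l(s_{x,i} v) = 2i - 1 + |\lambda|$ and $l(s_{y,i} v) = \bigl| 2i - 1 - |\lambda| \bigr|$. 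Imposing the two conditions $l(v) < l(s_\alpha v)$ and $l(s_\alpha v) \le n$ in each sub-case, using $n = 2k + \lambda + 1$ in (a) and $n = 2k + |\lambda| + 1$ in (b), then selects exactly the ranges of $i$ stated in the lemma.

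The one place that requires care is the boundary $l(s_\alpha v) = n$, where the Bruhat condition $s_\alpha v \le w$ is no longer automatic but reduces to the equality $s_\alpha v = w$. In case (a) a parity check shows that $l(s_\alpha v)$ and $n$ always have opposite parity, so this boundary never arises and the condition $\le w$ is equivalent to $l(s_\alpha v) < n$. In case (b) the boundary does arise: for $s_{x, k+1}$ the direct computation $s_{x, k+1} v = (st)^k s \cdot v$ yields an alternating word of length $n$ starting with $s$, which therefore equals $w$, so $s_{x, k+1}$ is included in the list; by contrast, for $s_{y,\, k + |\lambda| + 1}$ the analogous product is an alternating word of length $n$ starting with $t$, hence different from $w$, and this reflection is excluded. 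Controlling this boundary case is the main, and essentially only, subtle point; the remainder of the argument is routine bookkeeping of parities and inequalities.
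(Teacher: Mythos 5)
Your proposal is correct, and all the key formulas check out: in the infinite dihedral group Bruhat order does collapse to length comparison (plus equality), the telescoping cancellation rule for products of alternating words is valid, and the resulting length formulas $l(s_{x,i}v)$, $l(s_{y,i}v)$ in the two cases, together with the parity analysis and the boundary discussion at $l(s_\alpha v)=n$, reproduce exactly the two lists in the Lemma. Your route is organized differently from the paper's: the paper argues constructively, observing that for the listed reflections the product $s_\alpha v$ is again a tail of $w$ (for the $s_{y,\bullet}$ family) or a tail of $w$ with its last generator deleted (for the $s_{x,\bullet}$ family), as illustrated in \eqref{firstillustration} and \eqref{secondillustration}, so that the conditions $v<s_\alpha v\le w$ are visible at once, and it then simply asserts that no other reflections qualify. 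Your argument instead never identifies $s_\alpha v$ explicitly but deduces everything from $l(s_\alpha v)$ and the length characterization of Bruhat order; what this buys is a genuinely complete proof of the exhaustiveness direction (``they are the only ones''), which the paper leaves to the reader, and a transparent treatment of the only delicate point, namely the boundary $l(s_\alpha v)=n$, where you correctly include $s_{x,k+1}$ in case (b) because the product is the alternating word of length $n$ beginning with $s$, hence equal to $w$, and exclude $s_{y,k+|\lambda|+1}$ because the product begins with $t$. The one thing to make explicit when writing this up is the verification of your two preliminary principles (unique reduced expressions and the subword criterion giving ``$u\le v$ iff $l(u)<l(v)$ or $u=v$''), but both are standard and immediate for the infinite dihedral group.
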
  
  \begin{proof}
  Let us prove $ {\bf a})$. Since $ \lambda \ge 0 $ we have that
  $v $ begins with $ \ese$ and that $ \lambda = l(v ) -1 $. Viewing $ v $ as a 'tail' of $ w $, see
  \eqref{tail}, there are $k $ instances in $ w$ of $ \ese$ to the left of $v$ and also
  $k $ instances of $ \te $ to the left of $v$. Multiplying $ v $ on the right of the reflections
  $ s_{y,1}, \,  s_{y,2}, \cdots, s_{y,k} $ gives the tails from these $\te$'s,
as illustrated below in \eqref{firstillustration} for $ n = 20 $ and $ \lambda =9$
\begin{equation}\label{firstillustration}
  \raisebox{-.38\height}{\includegraphics[scale=1]{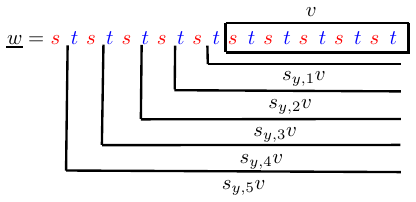}}
\end{equation}
and multiplying 
  $ v $ on the right of the reflections $ s_{x,\lambda +2 }, \,  s_{x,\lambda +3}, \,   \cdots  \, , s_{x,\lambda +k+1} $,
gives the tails from the $\ese$'s, upon deleting the last generator of $ w$, as illustrated below
\begin{equation}\label{secondillustration}
  \raisebox{-.38\height}{\includegraphics[scale=1]{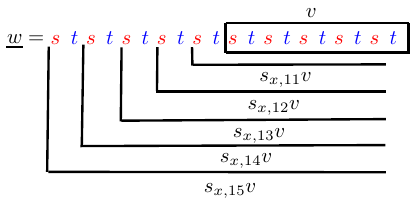}}
      \raisebox{-25\height}{ {\color{black}{.}}}
\end{equation}
These products all satisfy the conditions $ v < s_{\alpha } v \le w $ of the Lemma, and one also checks
that they are the only ones satisfying the conditions. 
This shows $ {\bf a})$, and $ {\bf b})$ is shown the same way.
\end{proof}  

\section{Graded Jantzen filtrations and sum formulas  }\label{Jantzen}
In this final section we study the representation theory of $ \tilde{A}_w $ and
its specialization $ \tilde{A}_w^{\comu}:= \tilde{A}_w \otimes_R \mathbb \comu $,
where the $ R $-algebra structure on $ \comu $ is given by 
mapping $ \alpha^\vee_\ese $ and $   \alpha^\vee_\te  $ to $ 0$.
Note that $ \tilde{A}_w^{\comu} $ 
has already been studied in the literature, in fact for general $ (W,S) $ for example in
\cite{steen} or in 
\cite{Plaza}.

\medskip
As already mentioned, $\tilde{A}_w $ is a cellular algebra and hence also $ \tilde{A}_w^{\comu} $ 
is a cellular algebra, with 
cell modules 
$ \Delta_w^{\comu}(y )  := \Delta_w(y) \otimes_R \comu$ for $ y \in \tilde{\Lambda}_w$. 
Moreover, as already indicated, $ \tilde{A}_w $ and $ \tilde{A}_w^{\comu} $
are $ \Z$-graded algebras with degree
function $ \deg $ given in \eqref{defin endo BS}. In fact  
they are {\it graded} cellular algebras, that is they fit into the following 
definition first formulated by Hu and Mathas, see \cite{hu-mathas}. 

\begin{definition}\label{gradedcel}
{\color{black}{Suppose that $ \Bbbk $ is a commutative ring with identity}} and
that $ \cal A $ is a $ \Bbbk $-algebra which is cellular on 
the triple $ (\Lambda, {\tab},  C ) $. Suppose moreover that $ \cal A $ is a $ \Z$-graded
algebra via $ { \cal A} = \oplus_{i \in \Z} A_i $. Then we say that $ \cal A $ is a 
$\Z$-graded cellular algebra if for
each $ \lambda \in \Lambda $ there is a function $ \deg: \tab(\lambda) \rightarrow \Z$ 
such that for $ \s, \T \in \tab(\lambda) $ we have that 
$ C_{\s \T} \in {\cal A}_{ \deg(\s) + \deg(\T) } $. 
\end{definition}

We shall in general refer to $\Z$-graded cellular algebras simply 
as {\it graded cellular algebras}.

\medskip
The same degree function $ \deg $ that was used for $ \tilde{A}_w $ also makes 
$   {A}_w $ and $ {A}_w^{\comu} $ into $\Z$-graded cellular algebras.
On the other hand, to make the blob-algebras $ \BRn $ and $ \NB$ fit into 
Definition \ref{gradedcel} we use the function $ \deg:\tab(\lambda) \rightarrow \Z $ given by 
\begin{equation}
  \deg(t):\tab(\lambda) \rightarrow \Z, \deg(t) = \left\{\begin{array}{ll}
  2 \{\mbox{number of blobs in $t$} \}& \mbox{ if    }
   \lambda \ge 0 \\
  2 \{\mbox{number of blobs in $t$} \} +1
& \mbox{     if     }   \lambda < 0  \end{array} \right.
\end{equation}  
where $ \tab(\lambda) $ refers to blob half-diagrams as in the paragraph prior to 
equation {\eqref{isomorphism3}}. Thus, when viewing $  \tab(\lambda) $ as the basis
elements for $ \Delta_n^{\B}(\lambda) $ as in {\eqref{examplehalf}} and
{\eqref{examplehalfneg}}, the function $ \deg $ assigns degree 1 to a blob on a propagating line,
and degree 2 to a blob on a non-propagating line.

\medskip
For $ \cal A $ a graded cellular algebra
over $ \Bbbk $ we let 
$ {\cal A}$-mod be the category of $ \cal A $-modules which 
are free over $ \Bbbk $ with finite basis consisting of homogeneous elements. 
For $ M $ 
in $ {\cal A}$-mod we define its {\it graded {\color{black}{rank}}} $ \rankq M \in \Z[q, q^{-1}]$ via
\begin{equation}
\rankq M := \sum_{i \in \Z} \rank M_i q^i 
\end{equation}
where $ \rank M_i $ is the number of basis elements for $ M $ that have degree $ i $.

\medskip
We shall use the notation
$ \tilde{A}_w^{gr} $ and $ \tilde{A}_w^{gr, \comu} $ when referring to 
$ \tilde{A}_w $ and $ \tilde{A}_w^{ \comu} $ as graded cellular algebras and similarly, 
for $ v \in \tilde{\Lambda}_w$, we shall
use the notation $ \Delta_w^{gr}(v ) $ and
$ \Delta_w^{gr, \comu}(v ) $ for the graded cell modules for $ \tilde{ A}_w^{gr} $ and $ \tilde{A}_w^{gr, \comu} $.
On the blob-algebra side, we shall use the notation $\BRgr$ and $ \NBgr $ for
$ \BRn $ and $ \NB $, when viewed as graded cellular algebras,
and shall for $ \lambda \in \Lambda_{ {\color{black}{\pm n}}}  $ use the notation $ \Delta_n^{gr, \Blob}( \lambda) $ and
$ \Delta_n^{gr, \Blob, \comu}(\lambda ) $ for the graded cell modules for
$\BRgr$ and $ \NBgr $.

\medskip
The proof of Theorem \refeq{mainTheoremSection3new} shows that
$\BRnngr  \cong A_w^{gr} $ and so also $\NBnngr  \cong A_w^{gr,\comu} $. 
{\color{black}{Recall the map $ \varphi:
\Lambda_{ \pm (n-1)}
\rightarrow  \Lambda_{w} $ from Lemma \ref{usualLength}}   }
Then, similarly, the proof of Theorem \ref{finallymodules} shows that 
$  \Delta^{gr, \Blob}_{n-1}(\lambda) \cong \Delta^{gr}_w(v)$
where
$ \varphi(\lambda) = v $ and so also $  \Delta^{gr, \Blob, \comu}_{n-1}(\lambda) \cong \Delta^{gr, \comu}_w(v)$.
However, for $ v $ not belonging  
to the image of $ \varphi$, that is for $ v \in  {\Lambda}^c_{w}  $, see 
\eqref{taildec}, 
we need to work a little bit to get an analogous description
of $ \Delta^{gr}_w(v) $ and $  \Delta^{gr, \comu}_w(v)  $.

\medskip
For $ \cal A $ a graded cellular algebra and
$ M = \oplus_{i \in \Z} M_i $ a graded $ \cal A $-module we define the {\it graded shift} $ M[k] $ of
$ M $ as the graded $ \cal A $-module given by 
\begin{equation}  M[k] = \oplus_{i \in \Z} M[k]_i \,\,\, \,  \mbox{where} \,\,\, \,
  M[k]_i:= M_{i-k}       { {\color{black}{.}}}
\end{equation} 
The first part of the following Lemma has just been mentioned, but we still include it 
for later reference. The second part of the Lemma essentially says that the $v$'s in $ \Lambda_w^c $
do not give rise to 'new' cell modules for $A_w^{gr} $. 
\begin{lemma}\phantomsection\label{lemmashift}
  Suppose that $ \underline{w} = \underline{w}_1s^{\prime} $, that
  is $ s^{\prime} $ is the last $S$-generator of $ \underline{w}$. 
\begin{description}
\item[a)] For $ v= \varphi(\lambda)  $ we have that 
$\Delta^{gr}_w(v) \cong  \Delta^{gr, \Blob}_{n-1}(\lambda) ${\color{black}{.}}
\item[b)]
For $ v \in \Lambda_w^c $ set $ v_1 := v s^{\prime} $, that is $ l(v_1) = l(v) +1 $. Then 
$\Delta^{gr}_w(v) \cong
 \Delta^{gr}_w(v_1 )[1] 
 \cong
\Delta^{gr, \Blob}_{n-1}(\lambda)[1]  
$ where
$ \varphi(\lambda) = v_1 $. 
\end{description}  
\end{lemma}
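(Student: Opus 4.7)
The plan is as follows. Part~${\bf a)}$ is the graded refinement of Theorem~\ref{finallymodules}. I first observe that the algebra isomorphism $\varphi\colon \BRnn \rightarrow A_w$ from Theorem~\ref{mainTheoremSection3new}~${\bf c)}$ is automatically a homomorphism of $\mathbb{Z}$-graded algebras: the generators $\mathbb{U}_i$ have degree $0$ for $i\geq 1$ and $\mathbb{U}_0$ has degree $2$, while their images $U_i$ satisfy the same degree conditions. The induced isomorphism of cell modules constructed in Theorem~\ref{finallymodules} sends $\overline{C}_\lambda$ to $\overline{C}_v$, and the diagrammatic formulas \eqref{Csoergel} and \eqref{CsoergelAB} show that these generators are homogeneous of the same degree. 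Hence the cell module isomorphism respects the grading, yielding $\Delta^{gr}_w(v)\cong \Delta^{gr,\Blob}_{n-1}(\lambda)$.

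For part~${\bf b)}$, the first isomorphism $\Delta^{gr}_w(v)\cong \Delta^{gr}_w(v_1)[1]$ is the real content; the second then follows by applying~${\bf a)}$ to $v_1$. The strategy is to produce a degree-shifted bijection between the cellular bases of $\Delta^{gr}_w(v_1)$ and $\Delta^{gr}_w(v)$. Given $D\in {\tab}_w(v_1)$, the hypothesis $v_1\in \Lambda_w$ forces zone~C of $D$ to be empty and $v_1$ to end in $s'$. In particular the rightmost column of $D$ (above the final $s'$ of $\underline{w}$) is a plain vertical line belonging to the rightmost hanging birdcagecage of $D$. Define $D'\in {\tab}_w(v)$ by removing that vertical line from the hanging birdcagecage and inserting in its place a single dot in zone~C. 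Conversely, since $v=v_1 s'$ has length one less than $v_1$, every element of ${\tab}_w(v)$ must have zone~C consisting of exactly one dot above the final $s'$ of $\underline{w}$, and the inverse operation recovers a unique element of ${\tab}_w(v_1)$. Because a dot carries degree $1$ and a vertical line degree $0$, the bijection $D\leftrightarrow D'$ uniformly raises degree by~$1$.

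To promote this to an isomorphism of graded $\tilde{A}^{gr}_w$-modules, I must verify that for every generator $a$ of $\tilde{A}_w$ and every $D\in {\tab}_w(v_1)$, the straightening of $a\,C^{v_1}_{D,D_{v_1}}$ in the double leaves basis corresponds under the bijection, with the same structure constants, to the straightening of $a\,C^{v}_{D',D_{v}}$ modulo terms in the cellular ideal strictly below $v_1$ (resp.\ $v$). The local point is that the generators of $\tilde{A}_w$ act only on the part of the diagram above $\underline{w}_1$, or couple the last $s'$ to its neighbour through a trivalent vertex; in either presentation the rightmost column is inert and the two cases differ only by the substitution ``vertical line $\leftrightarrow$ dot'' above the last $s'$. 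Once this is established, the resulting graded $\tilde{A}^{gr}_w$-module isomorphism $\Delta^{gr}_w(v_1)[1]\cong \Delta^{gr}_w(v)$ composed with part~${\bf a)}$ delivers $\Delta^{gr}_w(v)\cong \Delta^{gr,\Blob}_{n-1}(\lambda)[1]$.

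The main obstacle will be precisely the verification that the straightening of double leaves commutes with the ``vertical line $\leftrightarrow$ dot'' exchange in the rightmost column. Intuitively the last $s'$ of $\underline{w}$ is uncoupled from the bulk of the action in both presentations, but making this rigorous inside the recursive light leaves construction of \cite{LibedinskyGentle} and \cite{EW} requires a careful local analysis of each relation appearing in Definition~\ref{defin endo BS} applied near the rightmost boundary of the diagrams.
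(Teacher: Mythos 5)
Your treatment of part \textbf{a)} is fine and agrees with the paper: the isomorphisms of Theorems \ref{mainTheoremSection3new} and \ref{finallymodules} are visibly homogeneous, so they upgrade to the graded setting without further work.

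For part \textbf{b)}, however, there is a genuine gap, and it sits exactly where you say your ``main obstacle'' is. Your plan is to define a bijection on cellular basis diagrams (trade the last hanging line of $D\in\tab_w(v_1)$ for a dot in zone C) and then to check, relation by relation, that this bijection intertwines the straightening of $a\,C^{v_1}_{D,D_{v_1}}$ and of $a\,C^{v}_{D',D_{v}}$ modulo lower cell ideals. You never carry out this verification, and it is not a routine boundary check: the generators of $\tilde{A}_w$ do not act ``only above $\underline{w}_1$'' in any useful sense --- as the proof of Theorem \ref{mainTheoremSection3new} shows, multiplication by the $U_i$'s splits and merges birdcages, moves points between them, produces factors $\alpha_{\ese},\alpha_{\te}$, and in particular can turn hanging birdcages into non-hanging ones, i.e.\ it interacts with precisely the rightmost region you declare inert. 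So equivariance of your bijection is the whole content of the lemma and is left unproved. The paper avoids this issue entirely by \emph{realizing} the bijection as right multiplication: it takes the fixed degree-$1$ diagram $D_1$ which is the identity on the first $n-1$ strands and ends the last strand in a dot, and sets $f(D):=DD_1$. Since the left $\tilde{A}_w$-action is by multiplication from below and $f$ is multiplication from above, $f$ is automatically a module homomorphism; one then only has to observe that $f$ matches the two diagram bases bijectively and raises degree by $1$ (the degree of the dot), giving $\Delta^{gr}_w(v_1)[1]\cong\Delta^{gr}_w(v)$, and part \textbf{a)} finishes the argument. If you replace your ``local analysis of each relation'' by this one observation --- that your line-to-dot exchange is right multiplication by a fixed homogeneous element, hence commutes with the left action --- your argument closes; as it stands, the equivariance step is asserted but not established. (A smaller point: your structural claims that the rightmost column of every $D\in\tab_w(v_1)$ is a bare vertical line and that every element of $\tab_w(v)$ has zone C equal to a single dot would also need justification, whereas the right-multiplication formulation needs only the bijectivity of $D\mapsto DD_1$ on bases.)
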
  
\begin{dem}
  As mentioned, we only need to prove {\bf b)}. Let $ D_1 $ be the following diagram
\begin{equation}
D_1 :=  \raisebox{-.45\height}{\includegraphics[scale=0.8]{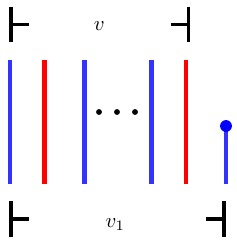}}
\end{equation}
(where we suppose that $ s^{\prime} $ is blue). 
For any diagram $ D $ for $ \Delta^{gr}_w(v_1 )$, we define $ f(D) := DD_1 $, that is 
$ f(D) $ is obtained from $ D $ by multiplying on top with $ D_1 $.
Then $ f $ induces an $R$-isomorphism $ \Delta_w(v_1) \cong  \Delta_w(v) $
which is also a module isomorphism 
since left multiplication
commutes with right multiplication. But 
$ D_1 $ is of degree 1, and so we get that 
$ f:  \Delta^{gr}_w(v_1 ) \rightarrow  \Delta^{gr}_w(v )[-1]$
and hence 
$ \Delta^{gr}_w(v_1 )[1] \cong  \Delta^{gr}_w(v )$. 
Combining this with {\bf a)} we obtain {\bf b)}. 
\end{dem}

\begin{lemma}\phantomsection\label{characterformula}
\begin{description}
\item[a)]  Suppose that $ \lambda \in \Lambda_{ {\color{black}{\pm (n-1)}}}  $ with $\lambda \ge 0 $. Then 
  \begin{equation} \rankq \Delta_{n-1}^{gr, \B}(\lambda) =\rankq \Delta_{n-1}^{gr, \B, \comu}(\lambda) = 
    \sum_{i=0}^{\frac{n-1-\lambda}{2}}
    \rank \Delta_{n-1}^{\TL}(\lambda+2i) q^{2i}   {\color{black}{.}}
  \end{equation}
\item[b)]  Suppose that $ \lambda \in \Lambda_{ {\color{black}{\pm (n-1)}}}  $ with $ \lambda < 0 $. Then 
  \begin{equation} \rankq \Delta_{n-1}^{gr, \B}(\lambda) =
 \rankq \Delta_{n-1}^{gr, \B, \comu}(\lambda) = 
 q \rankq \Delta_{n-1}^{gr, \B}(-\lambda)  {\color{black}{.}}
  \end{equation} 
  \end{description}
\end{lemma}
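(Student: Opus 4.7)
The plan is to compute the graded dimensions directly from the filtration $0 = \F^{-1}(\lambda) \subset \F^{0}(\lambda) \subset \cdots \subset \F^{k}(\lambda) = \Delta_{n-1}^{\B}(\lambda)$ introduced in \eqref{ffiltration} and to invoke Lemma \ref{homomorphismf} to compute the rank of each successive quotient. The second input is the explicit description of the degree function on the half-diagram basis recalled just before the lemma: each blob on a non-propagating arc contributes $2$, while if $\lambda < 0$ the mark on the leftmost propagating line contributes $1$.

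For part a), observe that the basis elements of $\F^i(\lambda) \setminus \F^{i-1}(\lambda)$ are exactly the half-diagrams carrying exactly $i$ blobs, all on non-propagating arcs, so they are homogeneous of degree $2i$. Since $\F^i(\lambda)/\F^{i-1}(\lambda) \cong \Delta_{n-1}^{\TL}(\lambda+2i)$ as free $R$-modules by Lemma \ref{homomorphismf}, the contribution of this layer to the graded dimension is $\dim \Delta_{n-1}^{\TL}(\lambda+2i) \, q^{2i}$, and summing over $i$ yields the claimed formula.

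For part b), the basis elements of $\F^i(\lambda) \setminus \F^{i-1}(\lambda)$ carry $i+1$ blobs in total ($i$ on non-propagating arcs and one on the leftmost propagating line) and are therefore homogeneous of degree $2i+1$. Lemma \ref{homomorphismf} again gives $\dim_R \F^i(\lambda)/\F^{i-1}(\lambda) = \dim \Delta_{n-1}^{\TL}(|\lambda|+2i)$, so
\[
\dim_q \Delta_{n-1}^{gr,\B}(\lambda) = \sum_i \dim \Delta_{n-1}^{\TL}(|\lambda|+2i) \, q^{2i+1} = q \, \dim_q \Delta_{n-1}^{gr,\B}(-\lambda)
\]
by part a) applied to $-\lambda \geq 0$. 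In both parts, the equalities with $\dim_q \Delta_{n-1}^{gr,\B,\comu}(\lambda)$ are immediate, since base change along $R \to \comu$ sends the homogeneous $R$-basis of $\Delta_{n-1}^{\B}(\lambda)$ to a homogeneous $\comu$-basis of the same graded size. The only step requiring a little care is verifying that each layer $\F^i(\lambda)/\F^{i-1}(\lambda)$ is concentrated in a single degree, so that the $R$-rank equality from Lemma \ref{homomorphismf} genuinely translates into an equality of graded dimensions; this is transparent from the diagrammatic description of the basis, and I do not foresee any further obstacle.
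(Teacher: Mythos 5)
Your argument is correct and is essentially the paper's own proof: the paper simply states that the lemma ``follows immediately from Lemma \ref{homomorphismf}'', and what you have written out — the blob-counting filtration \eqref{ffiltration}, the degree of each layer ($2i$, resp.\ $2i+1$), the rank of each layer from Lemma \ref{homomorphismf}, and the invariance of graded dimension under base change $R\to\comu$ — is exactly the intended expansion of that one-line proof.
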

\begin{dem}
This follows immediately from Lemma \ref{homomorphismf}. 
\end{dem}

\medskip
For $ \lambda \in  \Lambda_{ {\color{black}{\pm (n-1)}}}  $ 
let $ \{b_1, b_2, \ldots, b_m  \} $ be the $ R $-basis for $ \Delta_{n-1}^{ \B}(\lambda) $
obtained from \eqref{matrix} and 
Theorem \ref{cifactorA} if $ \lambda \ge 0 $ or from Theorem \ref{cifactorB} if
$ \lambda <0  $. According to these Theorems 
$ \langle b_i , b_i  \rangle^{\B}_{n-1,\lambda} $ is a product of positive roots for $ W$
and so $ \{b_1, b_2, \ldots, b_m  \} $ consists of homogeneous elements
since $ \langle \cdot , \cdot  \rangle^{\B}_{n-1,\lambda} $
is homogeneous. The degree of $ b_i $ is equal to number of roots appearing
in $ \langle b_i , b_i  \rangle^{\B}_{n-1,\lambda} $ according to Theorem \ref{cifactorA}
and \ref{cifactorB}.

\medskip
Let now $ \alpha $ be a positive root for $ W$. We then  
introduce the following $ \BRgr$-submodule of $ \Delta^{gr, \B}_{n-1}(\lambda) $
\begin{equation}\label{filtrablob}
  \Delta^{gr , \alpha}_{n-1}(\lambda):=   \{ a \in   \Delta^{gr, \B}_{n-1}(\lambda)  \, | \,
  \alpha \mbox{ divides} \, \,  \langle a, b \rangle_{n-1, \lambda}^{\B}
  \mbox{ for all } b \in  \Delta^{gr, \B}_{n-1}(\lambda)  \}   {\color{black}{.}}
\end{equation}
From the above remarks we have that $   \Delta^{gr , \alpha}_{n-1}(\lambda) $ is a free over $ R $
with basis  
\begin{equation}
 \{ b_i   \, | \,
\alpha \mbox{ is a factor of }  \langle b_i, b_i \rangle_{n-1, \lambda}^{\B}  \}   {\color{black}{.}}
\end{equation}
The proof of the following Theorem is a 
compilation of the results from the previous 
sections. 
\begin{theorem}\label{firstmain}
  Supposing that $ v = \varphi(\lambda) $ and that
  $ v < s_{\alpha } v \le w $ we have that 
\begin{equation}\label{gradedsumdim}
  \rankq \Delta_{n-1}^{gr, \alpha  }(\lambda) = \rankq \Delta^{gr}_{w}(s_{\alpha}v) [l(s_{\alpha} v)-l(v)]
   {\color{black}{.}}
\end{equation}
Otherwise, if $ v < s_{\alpha } v \le w $ is not satisfied, we have $   \Delta_{n-1}^{gr, \alpha  }(\lambda)  =0$.
\end{theorem}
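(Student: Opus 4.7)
The plan is to reduce the graded identity to the explicit diagonalization of $\langle\cdot,\cdot\rangle^{\B}_{n-1,\lambda}$ obtained in Theorems \ref{cifactorA} and \ref{cifactorB}, after translating via Theorem \ref{equivalentforms}. The orthogonal basis $\{b_1,\ldots,b_m\}$ of $\Delta^{\B}_{n-1}(\lambda)$ underlying the matrix \eqref{matrix} consists of homogeneous elements, partitioned by the blocks $S_0(\lambda),\ldots,S_k(\lambda)$ of Theorem \ref{withthisnotation}, with each $b\in S_j(\lambda)$ of degree $2j$ if $\lambda\ge 0$ and $2j+1$ if $\lambda < 0$, and with $\langle b,b\rangle^{\B}_{n-1,\lambda}=c_j(x,y)$ up to a nonzero scalar. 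Since every $\alpha_{x,l}$ and $\alpha_{y,l}$ is an irreducible linear polynomial in $R$, divisibility of $\langle b,b\rangle$ by $\alpha$ simply picks out those blocks for which $\alpha$ is a factor of the explicit product $c_j$ in \eqref{satisfies the formula} or \eqref{satisfies the formulaA}.

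Inspection of those product formulas shows that $\alpha\mid c_j$ precisely for $j\ge j_0$, where $j_0$ depends on $\alpha$ and $\lambda$ in an explicit fashion. This yields
\begin{equation}
\dim_q \Delta^{gr,\alpha}_{n-1}(\lambda) \;=\; \sum_{j=j_0}^{k}\dim\Delta^{\TL}_{n-1}(|\lambda|+2j)\, q^{2j+\epsilon},
\end{equation}
with $\epsilon\in\{0,1\}$ according to the sign of $\lambda$. If the condition $v<s_\alpha v\le w$ fails, then Lemma \ref{lemma43} shows that $\alpha$ does not occur among the factors of any $c_j$, so the sum is empty and $\Delta^{gr,\alpha}_{n-1}(\lambda)=0$, giving the ``otherwise'' assertion.

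For the main equality I would proceed case by case, according to $\lambda\ge 0$ or $\lambda<0$ and to the type ($x$ or $y$) of $\alpha$. In each subcase Lemma \ref{lemma43} identifies $s_\alpha v$ explicitly, either as a tail of $w$ (an element of $\Lambda_w$) or as a tail with the last generator of $w$ removed (an element of $\Lambda^c_w$), and a direct positional count in $w$ yields a closed formula for $l(s_\alpha v)-l(v)$. Applying Lemma \ref{lemmashift}(a) or (b) reduces the right hand side of \eqref{gradedsumdim} to a shifted graded dimension of some $\Delta^{gr,\B}_{n-1}(\mu)$, and Lemma \ref{characterformula} then expands this as a sum over Temperley-Lieb cell modules. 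The total $q$-shift assembles from three contributions---the length shift $[l(s_\alpha v)-l(v)]$, the extra $q$-shift of $1$ when $s_\alpha v\in\Lambda_w^c$, and the $q$-factor of Lemma \ref{characterformula}(b) when $\mu<0$---and one checks it telescopes to exactly $2j+\epsilon$, matching the displayed formula above.

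The main obstacle is the bookkeeping of these three $q$-shifts across the four subcases; keeping careful track of parities and of the starting letters of $v$ and $s_\alpha v$ is somewhat delicate, but once the positional description of Lemma \ref{lemma43} is made explicit, each of the four verifications is elementary and parallel to the others.
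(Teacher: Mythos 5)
Your proposal is correct and follows essentially the same route as the paper: it uses the block structure and homogeneity coming from Theorems \ref{cifactorA}/\ref{cifactorB}, the irreducibility of the roots in $R$ to pick out the blocks where $\alpha$ divides $c_j$, and then Lemmas \ref{usualLength}, \ref{lemma43}, \ref{lemmashift} and \ref{characterformula} to identify the resulting $q$-shifted sum with $\dim_q \Delta^{gr}_{w}(s_{\alpha}v)[l(s_{\alpha}v)-l(v)]$, case by case in the sign of $\lambda$ and the type of $\alpha$. The only difference is that the paper carries out the shift bookkeeping explicitly in the two $\lambda\ge 0$ subcases (obtaining $q^{2i-1}$, resp. an extra factor $q$ from Lemma \ref{lemmashift}(b)) and leaves the rest as "similar", which is exactly the verification you defer.
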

\begin{dem}
  Let $ k := \frac{n-1-\lambda}{2}$. 
  Let us first consider the case where $  \lambda \ge 0 $ and 
  $ \alpha = \alpha_{y,i}$ for some $ i=1,2, \ldots, k $, hence 
  $ s_{\alpha} = {\color{blue} t} ({\color{red} s}  {\color{blue} t})^{i-1} $,
  see Lemma \ref{thisshowsrootsref}. 
By Lemma \ref{lemmashift} we have that 
$\Delta^{gr, \Blob}_{n-1}(\lambda) \cong \Delta^{gr}_w(v)      $.
The distinct roots $ \alpha_{x,j} $ and $ \alpha_{y,j} $ are
irreducible and unassociated elements of $ R $ and so it follows from the description
in Theorem \ref{cifactorA} of the matrix $M_{n-1, \lambda}^{\B} $ in 
\eqref{matrix} that 

\begin{equation}\label{calculationA}
  \rankq \Delta^{gr, \alpha}_{n-1}(\lambda) = \sum_{j=i}^{k}
  \rank \Delta_{n-1}^{\TL}(\lambda+2j) q^{2j } =
q^{2i}   \sum_{j=0}^{k-i }
\rank \Delta_{n-1}^{\TL}(\lambda+2i + 2j) q^{2j }  {\color{black}{.}}
  \end{equation}
Using Lemma \ref{characterformula} and
Lemma \ref{usualLength}
we get that \eqref{calculationA} is equal to
\begin{equation}\label{butthisis}
  q^{2i}   \rankq \Delta_{n-1}^{gr, \B}(\lambda+2i ) =
  q^{2i-1}   \rankq \Delta_{n-1}^{gr, \B}(-\lambda-2i ) = 
 q^{2i-1}   \rankq   \Delta^{gr}_w({\color{blue} t} ({\color{red} s}  {\color{blue} t})^{i-1}v)  = 
 q^{2i-1}   \rankq   \Delta^{gr}_w(  s_{\alpha}v)   {\color{black}{.}}
\end{equation}
But $ v $ begins with ${\color{red} s} $ since $ \lambda \ge 0 $ and 
so the last expression of \eqref{butthisis} is $\rankq   \Delta^{gr}_w(  s_{\alpha}v)
[l(s_{\alpha} v)-l(v)] $ which shows the Theorem in this case.
For an illustration of 
$ s_{\alpha} v $, see \eqref{firstillustration}. 

\medskip
Let us now consider the case where still $ \lambda \ge 0$, 
but $ \alpha = \alpha_{x,\lambda + 1 + i }$ and hence
$ s_{\alpha} =  ({\color{red} s}  {\color{blue} t})^{\lambda+i} {\color{red} s}$, 
see Lemma \ref{thisshowsrootsref}. 
Then $  \alpha$ appears in the same blocks
as $ \alpha_{y,i} $ did in the previous case, and so we get  
from \eqref{calculationA} and
\eqref{butthisis} 
that 
\begin{equation}\label{calculationAB}
  \rankq \Delta^{gr, \alpha}_{n-1}(\lambda) =
  q^{2i-1}   \rankq   \Delta^{gr}_{w}({\color{blue} t} ({\color{red} s}  {\color{blue} t})^{i-1}v) =
q^{2i}   \rankq   \Delta^{gr}_w( ({\color{red} s}  {\color{blue} t})^{i}v)  
\end{equation}
where we used Lemma \ref{usualLength} and 
{\bf b)} of Lemma \ref{characterformula} for the second equality. 
On the other hand, 
writing $ v= u s^{\prime} $ with $ l(u)+1 = l(v) $ we get from
Lemma \ref{lemma43} and 
{\bf b)} of Lemma \ref{lemmashift} that
\begin{equation}\label{compare2}
 \rankq   \Delta^{gr}_w( s_{\alpha} v) =    \rankq   \Delta^{gr}_w( ({\color{red} s}  {\color{blue} t})^{i}u)
  =  q  \rankq   \Delta^{gr}_w( ({\color{red} s}  {\color{blue} t})^{i}v)     {\color{black}{.}}
\end{equation}
Comparing \eqref{calculationAB} and \eqref{compare2} we get 
\begin{equation}
\rankq \Delta^{gr, \alpha}_{n-1}(\lambda) =
q^{2i-1}\rankq   \Delta^{gr}_w( s_{\alpha} v)
= \rankq   \Delta^{gr}_w( s_{\alpha} v)
  [l(s_{\alpha} v)-l(v)] 
\end{equation}
which shows the Theorem in this case as well. 
The remaining cases of the Theorem are proved with similar techniques. 
\end{dem}

\medskip
Our next aim is to generalize
Theorem \ref{firstmain} to $  \Delta_w^{gr}(v ) $.
This is immediate if $ v \in  \Lambda_w $ since in that case $ v \in im \, \varphi $, whereas
for $ v \in   \Lambda_w^c $ we have to work a little bit. 
For $ \alpha $ a positive root we first generalize
\eqref{filtrablob} in order to get a graded submodule
$  \Delta^{gr , \alpha}_{w}(v) $ of 
$ \Delta^{gr}_{w}(v) $.
\begin{equation}\label{filtraSoergel}
  \Delta^{gr , \alpha}_{w}(v):=   \{ a \in   \Delta^{gr}_{w}(v)  \, | \,
  \alpha \mbox{ divides} \, \,  \langle a, b \rangle^w_{n, v}
  \mbox{ for all } b \in  \Delta^{gr}_{w}(v)  \}   {\color{black}{.}}
\end{equation}
We then have the following Theorem. 
\begin{theorem}\label{secondmaincompilation}
Let $ \alpha $ be a positive root for $ W$. If $ v < s_{\alpha } v \le w $ then  
  \begin{equation}\label{gradedsumdimB}
  \rankq \Delta_{w}^{gr, \alpha  }(v) = \rankq \Delta^{gr}_{w}(s_{\alpha}v) [l(s_{\alpha} v)-l(v)]
\end{equation}
and otherwise $   \Delta_{w}^{gr, \alpha  }(v)  =0$.
\end{theorem}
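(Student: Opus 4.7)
The strategy is to reduce Theorem \ref{secondmaincompilation} to Theorem \ref{firstmain} via the bijection $\varphi:\Lambda_{n-1}\to\Lambda_w$ and the shift isomorphism of Lemma \ref{lemmashift}, splitting on whether $v\in\Lambda_w$ or $v\in\Lambda_w^c$.

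\textbf{Case 1 ($v\in\Lambda_w$).} Write $v=\varphi(\lambda)$. Lemma \ref{lemmashift}(a) combined with Theorem \ref{equivalentforms} gives a graded $A_w^{gr}$-module isomorphism $\Delta_w^{gr}(v)\cong\Delta_{n-1}^{gr,\B}(\lambda)$ that intertwines the cellular bilinear forms. Hence $\Delta_w^{gr,\alpha}(v)\cong \Delta_{n-1}^{gr,\alpha}(\lambda)$ as graded submodules, and Theorem \ref{firstmain} delivers both \eqref{gradedsumdimB} and the vanishing statement.

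\textbf{Case 2 ($v\in\Lambda_w^c$).} Set $v_1:=vs'$, so that $v_1\in\Lambda_w$ and $l(v_1)=l(v)+1$. Lemma \ref{lemmashift}(b) provides a graded $A_w^{gr}$-module isomorphism $f:\Delta_w^{gr}(v_1)[1]\to\Delta_w^{gr}(v)$ via $f(D)=DD_1$. The cellular bilinear form does not pull back verbatim; instead, reducing the Soergel product $D_1^{\ast}\, C_{v_1}\, D_1$ modulo $A^{<v}$ by means of the polynomial-forcing relation \eqref{sgraphE}, the plan is to prove
\begin{equation}
D_1^{\ast}\, C_{v_1}\, D_1 \equiv (v\alpha')\, C_v \pmod{A^{<v}},
\end{equation}
where $\alpha'\in\mathfrak h^{\ast}$ is the simple root attached to $s'$. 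The polynomial $\beta:=v\alpha'$ is a positive root for $W$ (since $v\in\Lambda_w^c$ ends in the generator opposite to $s'$, one has $l(vs')>l(v)$), and the associated reflection $s_\beta=vs'v^{-1}$ satisfies $v<s_\beta v\le w$ while $v_1<s_\beta v_1\le w$ fails. Thus the matrix of $\langle\cdot,\cdot\rangle_{n,v}^w$ is $\beta$ times that of $\langle\cdot,\cdot\rangle_{n,v_1}^w$, yielding $\Delta_w^{gr,\alpha}(v)\cong\Delta_w^{gr,\alpha}(v_1)[1]$ for $\alpha\ne\beta$ and $\Delta_w^{gr,\beta}(v)\cong\Delta_w^{gr}(v_1)[1]$. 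Combined with Case 1 applied to $v_1$, together with the identity $s_\alpha v=(s_\alpha v_1)s'$ and Lemma \ref{lemmashift} to relate $\dim_q\Delta_w^{gr}(s_\alpha v)[l(s_\alpha v)-l(v)]$ to $\dim_q\Delta_w^{gr}(s_\alpha v_1)[l(s_\alpha v_1)-l(v_1)]$, this recovers \eqref{gradedsumdimB} for $v$.

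\textbf{Main obstacle.} The hardest step will be the explicit diagrammatic identification of $D_1^{\ast}\, C_{v_1}\, D_1$ with $(v\alpha')\,C_v$ modulo lower terms in the Soergel calculus: this requires tracking the polynomial produced by the two dots of $D_1^{\ast}$ and $D_1$ as it is forced through the full-birdcage structure of $C_{v_1}$ via the Demazure-operator relation \eqref{sgraphE}, and recognising the resulting polynomial as precisely $v\alpha'$. Once that identification is in hand, the rest is a combinatorial verification that $\{s_\alpha : v<s_\alpha v\le w\}=\{s_\beta\}\sqcup\{s_\alpha : v_1<s_\alpha v_1\le w\}$ and a bookkeeping check that the length shifts in \eqref{gradedsumdimB} are compatible with the grading shift $[1]$ from Lemma \ref{lemmashift}(b).
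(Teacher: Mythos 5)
Your proposal follows essentially the same route as the paper: the case $v\in\Lambda_w$ is handled by reduction to Theorem \ref{firstmain}, and for $v\in\Lambda_w^c$ the paper likewise passes to $v_1=vs'$, shows via the bijection of diagram bases that the Gram matrix of $\langle\cdot,\cdot\rangle^w_{n,v}$ is $v(\alpha')$ times $M^{\B}_{n-1,\lambda}$ (your root $\beta=v\alpha'$, which the paper identifies as $\alpha_{x,\lambda+1}$ when $\lambda\ge 0$, with $s_\beta v_1=v$), decomposes the reflection set as $\{s_\beta\}\cup\{s_\alpha: v_1<s_\alpha v_1\le w\}$, and finishes with the same shift bookkeeping via Lemma \ref{lemmashift}. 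The step you flag as the main obstacle is exactly the scaling identity the paper asserts from the definition of the bilinear forms, so your outline is correct and matches the paper's argument.
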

\begin{dem}
  As already mentioned, if $ v \in  \Lambda_w $
  the result follows immediately from Theorem \ref{firstmain}, so let us assume
  that $ v \in  \Lambda_w^c$.  As before we write $ \underline{w} = \underline{w}_1s^{\prime} $, where
  $ s^{\prime} $ is the last $S$-generator of $ \underline{w}$ and set $ v_1 = v s^{\prime}$. Then
  $ l(v_1) = l(v)+1$ and $ v_1 \in \Lambda_w$. Let $D$ be a diagram basis element
  for $\Delta_{w}(v) $. Then $ D $ has nonempty zone C and we define $ D_1 $ to be
  the diagram basis element for $\Delta_{w}(v_1) $ which is obtained from $ D$ by making the last
  non-hanging birdcage, corresponding to zone C, hanging.
  Then $ D \mapsto D_1 $ is a bijection between the diagram basis for 
  $\Delta_{w}(v) $ and the diagram basis for $\Delta_{w}(v_1) $. On the other hand, using the
  definition
  of the bilinear forms we have that
\begin{equation}
  \langle D, C \rangle^w_{n, v} = 
  v(\alpha^{\prime})  \langle D_1, C_1 \rangle^w_{n, v_1}
\end{equation}  
where $ \alpha^{\prime} $ is the root corresponding to $ s^{\prime} $ and so  
the matrix for $ \langle \cdot, \cdot \rangle^w_{n, v} $  has the diagonalized form 
$  v(\alpha^{\prime})  M_{n-1, \lambda}^{\B} $ where $ \lambda = \varphi(v_1) $. 
We now assume that $ \lambda \ge 0  $ such that $ M_{n-1, \lambda}^{\B} $ is given by
Theorem \ref{cifactorA}. One then checks that $ v(\alpha^{\prime}) = \alpha_{x, \lambda+1}$. 
Moreover, the set of reflections $ s_{\alpha} $ in $W $ satisfying
  $ v < s_{\alpha } v \le w $ is the union of  $ s_{x,\lambda +1} $ with the set of reflections
$ s_{\alpha} $ satisfying
  $ v_1 < s_{\alpha } v_1 \le w $, and hence, 
in view of Lemma \ref{lemma43}, it is equal to 
\begin{equation}
  \{
 s_{x,\lambda +1},  s_{x,\lambda +2 }, \,  s_{x,\lambda +3}, \,   \cdots  \, , s_{x,\lambda +k+1} \} \cup \{
 s_{y,1}, \,  s_{y,2}, \cdots, s_{y,k} \}
\end{equation}
which is exactly the set of reflections 
obtained by transforming the roots of the factors of the diagonal elements of 
of $  v(\alpha^{\prime})  M_{n-1, \lambda}^{\B} $ to reflections.

Let us now show \eqref{gradedsumdimB}. If $ \alpha $ is a factor of 
a diagonal element of $  M_{n-1, \lambda}^{\B}$ we are reduced to the previous case treated
in Theorem \ref{firstmain}, as follows
\begin{equation}
  \rankq \Delta_{w}^{gr, \alpha  }(v) =   q \rankq   \Delta^{gr, \alpha}_{n-1}(\lambda) = 
  q  \rankq \Delta^{gr}_{w}(s_{\alpha}v_1) [l(s_{\alpha} v_1)-l(v_1)] =
  \rankq \Delta^{gr}_{w}(s_{\alpha}v) [l(s_{\alpha} v)-l(v)]
\end{equation}
where we used Lemma \ref{lemmashift} for the last step. 
On the other hand, if 
$ \alpha = \alpha_{x,\lambda +1} $ we have that $ s_{\alpha} v_1=v $ and so we get 
\begin{equation}
  \rankq \Delta_{w}^{gr, \alpha  }(v) =   q \rankq   \Delta^{gr}_{n-1}(\lambda) = 
  \rankq \Delta^{gr}_{w}(s_{\alpha}v_1) [l(s_{\alpha} v)-l(v)] 
\end{equation}
which shows \eqref{gradedsumdimB} in this case as well. The cases where $ \lambda < 0 $ are treated
with similar techniques.
\end{dem}

\medskip
$  \Delta^{gr , \alpha}_{w}(v) $ is free over $ R $ and hence
we get immediately a specialized version of Theorem \ref{secondmaincompilation}.
\begin{corollary}\label{maincor}
Defining $  \Delta^{gr , \alpha, \comu}_{w}(v) :=   \Delta^{gr , \alpha}_{w}(v) \otimes_R \comu $ 
we have that
\begin{equation}
  \rankq \Delta_{w}^{gr, \alpha, \comu  }(v) = \rankq \Delta^{gr, \comu}_{w}(s_{\alpha}v) [l(s_{\alpha} v)-l(v)]
   {\color{black}{.}}
\end{equation}  
\end{corollary}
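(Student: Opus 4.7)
The plan is to deduce the corollary directly from Theorem \ref{secondmaincompilation} by specialization, using that graded dimensions are invariant under base change when applied to free graded modules of finite rank over $R$.

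First I would verify that $\Delta^{gr,\alpha}_w(v)$ is free over $R$ with an explicit homogeneous $R$-basis. As observed in the paragraph preceding the definition \eqref{filtraSoergel}, one translates \eqref{filtraSoergel} to the blob-algebra side via the isomorphism of Theorem \ref{equivalentforms} (or via Lemma \ref{lemmashift} when $v \in \Lambda_w^c$). On the blob side, the diagonalization \eqref{matrix} together with Theorems \ref{cifactorA} and \ref{cifactorB} produces an explicit homogeneous $R$-basis $\{b_1,\dots,b_m\}$ of $\Delta^{\B}_{n-1}(\lambda)$ such that $\langle b_i,b_i\rangle^{\B}_{n-1,\lambda}$ is a product of positive roots for $W$. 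Hence the subset $\{b_i \mid \alpha \text{ divides } \langle b_i,b_i\rangle^{\B}_{n-1,\lambda}\}$ is a free homogeneous $R$-basis for $\Delta^{gr,\alpha}_w(v)$ (up to the grading shift coming from Lemma \ref{lemmashift} when $v\in\Lambda_w^c$, and up to the extra factor $v(\alpha')$ computed in the proof of Theorem \ref{secondmaincompilation}).

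Next I would apply the elementary base-change principle: if $M$ is a $\Z$-graded $R$-module which is free with a finite homogeneous $R$-basis, then
\begin{equation}
\dim_q (M\otimes_R \comu) = \dim_q M,
\end{equation}
since each basis element contributes one homogeneous summand in the same degree in both $M$ and $M\otimes_R\comu$. Applying this to $M=\Delta^{gr,\alpha}_w(v)$ gives $\dim_q \Delta^{gr,\alpha,\comu}_w(v) = \dim_q \Delta^{gr,\alpha}_w(v)$. The cell module $\Delta_w^{gr}(s_\alpha v)$ is likewise free over $R$ with homogeneous basis (by the cellular algebra formalism of Section \ref{isomorphism theorems}, where $\Delta_w(v)$ has $R$-basis $\{\overline{C}_{D,D_v}\}$), so the same principle yields $\dim_q \Delta^{gr,\comu}_w(s_\alpha v) = \dim_q \Delta^{gr}_w(s_\alpha v)$.

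Finally I would combine these two equalities of graded dimensions with the identity established in Theorem \ref{secondmaincompilation},
\begin{equation}
\dim_q \Delta^{gr,\alpha}_w(v) = \dim_q \Delta^{gr}_w(s_\alpha v)\,[l(s_\alpha v)-l(v)],
\end{equation}
to obtain the desired formula. In the case where the condition $v < s_\alpha v \le w$ fails, we have $\Delta^{gr,\alpha}_w(v)=0$ by Theorem \ref{secondmaincompilation}, so $\Delta^{gr,\alpha,\comu}_w(v)=0$ as well. No step is really an obstacle here; the only thing to be careful about is checking that the homogeneous $R$-basis produced by the diagonalization really descends through $\otimes_R \comu$ to a homogeneous $\comu$-basis, i.e.\ that no basis elements are killed by the specialization. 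This is automatic because extension of scalars along $R\to\comu$ takes a free $R$-module of rank $m$ to a $\comu$-vector space of dimension $m$ with the specialized basis, preserving degrees.
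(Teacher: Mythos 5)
Your proposal is correct and follows essentially the same route as the paper, which simply notes that $\Delta^{gr,\alpha}_{w}(v)$ is free over $R$ (with homogeneous basis coming from the diagonalization) and then specializes Theorem \ref{secondmaincompilation}, since graded dimension is preserved under $\otimes_R\comu$ for free graded modules. Your extra care about no basis elements being killed by the specialization is exactly the point the paper leaves implicit.
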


The definition of $ \Delta^{gr , \alpha}_{w}(v) $ in \eqref{filtraSoergel}
is reminiscent of the Jantzen filtration for Verma modules. 
To make this analogy even stronger we let $ R_1:= \comu [{ \mathbf x}]$ 
and define $ \Delta^{gr , {\mathbf x}}_w(v) := \Delta^{gr }_{w}(v) \otimes_R R_1  $
where $ R_1 $ is made into an $R$-module via $ \alpha_\ese \mapsto {\mathbf x} $ and 
$ \alpha_\te \mapsto {\mathbf x} $. Then $ \{ \Delta^{gr , {\mathbf x}}_w(v) | v \in \tilde{\Lambda}_w \}$
are graded cell modules for $ \tilde{A}_w^{gr, \mathbf x} :=  \tilde{A}^{gr}_w \otimes_R R_1 $.  
In $ R_1 $ all the roots $ \alpha_{x, i } $ and $ \alpha_{y, i } $ are non-zero scalar multiples
of $ {\mathbf x} $ and so we define for any $ k =1,2,\ldots$ 
\begin{equation}\label{filtraSoergelx}
  \Delta^{gr , k, {\mathbf x}}_{w}(v):=   \{ a \in   \Delta^{gr, \mathbf x}_{w}(v)  \, | \,
  {\mathbf x}^k \mbox{ divides} \, \,  \langle a, b \rangle^w_{n, v}
  \mbox{ for all } b \in  \Delta^{gr, \mathbf x }_{w}(v)  \}
\end{equation}
\begin{equation}\label{filtraSoergelxA}
  \Delta^{gr , k, \comu}_{w}(v):=   \pi (   \Delta^{gr ,  {\mathbf x}, k }_{w}(v) ) 
\end{equation}
where $ \pi:  \Delta^{gr ,  {\mathbf x} }_{w}(v)  \rightarrow \Delta^{gr ,  {\mathbf x} }_{w}(v)  \otimes_{R_1} \comu
$ is the quotient map: here $ \comu $ is made into an $R_1$-module via
${ \mathbf x} \mapsto 0$. 
Then $  \Delta^{gr , \comu}_{w}(v)  \supseteq  \Delta^{gr , 1, \comu}_{w}(v) \supseteq 
\Delta^{gr , 2, \comu}_{w}(v) \supseteq  \ldots $ is a filtration of graded submodules of 
$  \Delta^{gr , \comu}_{w}(v)  $ and we have the following Corollary to Theorem \ref{secondmaincompilation}.
  
\begin{corollary}\phantomsection\label{maincorB2}
\begin{description}
\item[a)] $ \Delta^{gr ,  \comu}_{w}(v) / \Delta^{gr , 1, \comu}_{w}(v) $ is irreducible or zero. 
\item[b)] The following graded analogue of Jantzen's sum formula holds
\begin{equation}  \sum_{k>0} \rankq \Delta_{w}^{gr, k, \comu  }(v) =
  \sum_{\substack{ \alpha > 0 \\  v <s_{\alpha} v \le w}}  \rankq \Delta^{gr, \comu}_{w}(s_{\alpha}v) [l(s_{\alpha} v)-l(v)]
\end{equation}
where $ \alpha > 0 $ refers to the positive roots of $ W$. 
 \end{description}
\end{corollary}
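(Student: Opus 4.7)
The plan is to work throughout in the diagonal basis $\mathcal{B} = \{b_1, \ldots, b_m\}$ for $\Delta^{\Blob}_{n-1}(\lambda)$ (with $\lambda = \varphi^{-1}(v)$ for $v \in \Lambda_w$, or the analogous basis lifted through Lemma \ref{lemmashift} when $v \in \Lambda_w^c$), whose existence and homogeneity were established in Section \ref{deter}. In this basis each $\langle b_i, b_i\rangle$ is a product $\prod_{j=1}^{m_i} \beta_{i,j}$ of \emph{distinct} positive roots of $W$, as one reads off from Theorems \ref{cifactorA} and \ref{cifactorB}. Under the specialization $\alpha_{\ese}, \alpha_{\te} \mapsto \mathbf{x}$ from $R$ to $R_1 = \comu[\mathbf{x}]$, every positive root $\alpha_{x,i}$ or $\alpha_{y,i} = (2i-1)\mathbf{x}$ maps to a nonzero scalar multiple of $\mathbf{x}$, so the specialized Gram matrix is diagonal with entries $c_i \cdot \mathbf{x}^{m_i}$ with $c_i \in \comu^{\times}$. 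This is the key observation that drives everything.

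For part \textbf{a)} I would identify $\Delta^{gr,1,\comu}_w(v)$ with the radical of the bilinear form $\langle\cdot,\cdot\rangle^{w,\comu}_{n,v}$ on $\Delta^{gr,\comu}_w(v)$. Indeed, an element $a \in \Delta^{gr,\comu}_w(v)$ lies in the radical iff any (equivalently, some) lift $\tilde a \in \Delta^{gr,\mathbf{x}}_w(v)$ satisfies $\langle \tilde a, \tilde b\rangle \in \mathbf{x} R_1$ for every $\tilde b$, which is precisely the condition defining $\Delta^{gr,1,\mathbf{x}}_w(v)$, and this condition passes to the quotient under $\pi$. The irreducibility of the quotient by the radical of the form then follows from the general theory of cellular algebras of Graham--Lehrer.

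For part \textbf{b)} I would compute both sides of the identity in the diagonal basis. On the left, since $\Delta^{gr,k,\mathbf{x}}_w(v) = \spa_{R_1}\{b_i \otimes 1 : m_i \ge k\}$, we get
\begin{equation*}
 \sum_{k>0} \dim_q \Delta^{gr,k,\comu}_w(v) = \sum_{k>0} \sum_{i : m_i \ge k} q^{\deg b_i} = \sum_i m_i\, q^{\deg b_i}.
\end{equation*}
On the right, using the definition \eqref{filtraSoergel} one has $\Delta^{gr,\alpha,\comu}_w(v) = \spa_\comu\{\overline{b}_i : \alpha \mid \langle b_i, b_i\rangle\}$, so summing over positive roots gives
\begin{equation*}
\sum_{\alpha > 0}\dim_q \Delta^{gr,\alpha,\comu}_w(v) = \sum_i \#\{\alpha>0 : \alpha \mid \langle b_i,b_i\rangle\}\, q^{\deg b_i} = \sum_i m_i\, q^{\deg b_i},
\end{equation*}
where the last equality uses the \emph{distinctness} of the positive-root factors appearing in each $\langle b_i,b_i\rangle$, which is visible from \eqref{satisfies the formula} and \eqref{satisfies the formulaA}. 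Combining these with Corollary \ref{maincor} (which rewrites each summand $\dim_q \Delta^{gr,\alpha,\comu}_w(v)$ as $\dim_q \Delta^{gr,\comu}_w(s_\alpha v)[l(s_\alpha v)-l(v)]$ whenever $v < s_\alpha v \le w$, and as zero otherwise) yields the desired sum formula.

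The main subtlety I anticipate is the bookkeeping for $v \in \Lambda_w^c$, where Lemma \ref{lemmashift} introduces an extra factor $v(\alpha')$ multiplying the Gram matrix $M^{\B}_{n-1,\lambda}$; one must check that the set of positive roots appearing as factors on the diagonal — now $\{\alpha_{x,\lambda+1}\}$ together with the roots from Lemma \ref{lemma43} — matches exactly the set of reflections $s_\alpha$ with $v < s_\alpha v \le w$. This was effectively verified in the proof of Theorem \ref{secondmaincompilation}, so the counting argument above extends uniformly. Everything else is a direct compilation of the preceding sections.
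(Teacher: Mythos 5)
Your proposal is correct and is essentially the argument the paper intends: the corollary is stated there without a separate proof, as a compilation of the diagonalization in Theorems \ref{cifactorA} and \ref{cifactorB} (whose diagonal entries are products of \emph{distinct} positive roots, all becoming nonzero multiples of $\mathbf{x}$ in $R_1$), Theorem \ref{secondmaincompilation} and Corollary \ref{maincor}, and your double-counting of root multiplicities together with the identification of $\Delta^{gr,1,\comu}_w(v)$ with the radical of the form is exactly this route. One tiny caveat: as an $R_1$-submodule, $\Delta^{gr,k,\mathbf{x}}_w(v)$ also contains the elements $\mathbf{x}^{\,k-m_i}b_i$ for $m_i<k$, but these vanish under $\pi$, so your formula for $\dim_q\Delta^{gr,k,\comu}_w(v)$ and hence the whole computation is unaffected.
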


As pointed out in the introduction, analogues of ungraded Jantzen filtrations with
associated sum formulas
exist in many module categories of Lie type and 
give information on the irreducible modules
for the category in question. 
But although graded representation theories in Lie theory have been known
for a long time and would be very useful for calculating decomposition numbers,
to our knowledge
graded sum formulas have so far not been available. The virtue of 
Corollary \ref{maincorB2} is to show the possible form of graded sum formulas in graded
representation theory.

\medskip

It should be noted that in the present $ \NB $-situation, the irreducible modules
can be read off from
Theorem \ref{cifactorA} and 
Theorem \ref{cifactorB} and are in fact 
Temperley-Lieb 
cell modules. 
See also \cite{Plaza1} for a different approach to
this.

\medskip
We believe that the equalities in 
Theorem \ref{secondmaincompilation}
and 
Corollary \ref{maincor} are valid on module level, but have so far not been able to prove so.
But in the remainder of the article
we indicate how to generalize them 
to {\it enriched} Grothendieck group level.
The methods for this are essentially generalizations to the graded case of the methods
in \cite{steen}, where the corresponding ungraded case is treated.

\medskip
Let $ \cal A $ be a graded cellular algebra
over $ \comu $.
Let $ \langle {\cal A}{\rm-mod} \rangle_{q} $ be the {\it enriched} Grothendieck group 
for $ \cal A$, that is $ \langle {\cal A}{\rm-mod} \rangle_{q} $
is the Abelian group generated by symbols $ \langle M \rangle_q $, for $ M $ running over the
modules in $ \langle {\cal A}{\rm-mod} \rangle_{q} $,
subject to the relations $  \langle M \rangle_q =  \langle M_1 \rangle_q +  \langle M_2 \rangle_q $ whenever
there is a short exact sequence $ 0 \rightarrow M_1 \rightarrow M \rightarrow M_2 \rightarrow 0$
in $ \langle {\cal A}{\rm-mod} \rangle_{q} $. 
The grading shift in $ {\cal A}-{\rm mod} $
induces a grading
shift in $ \langle {\cal A}{\rm-mod} \rangle_{q} $
via $ \langle M \rangle_q[ k] := \langle M [k] \rangle_q$ and so we get a 
$ \Z[q, q^{-1}] $-structure on $ {\cal A}{\rm-mod} $ via
$ \langle M \rangle_q +\langle N \rangle_q := \langle M \oplus N \rangle_q$ and 
$ q^{k} \langle M \rangle_q  := \langle M \rangle_q[k]$.

\medskip
The following is a natural generalization of the definition of a cellular category, see \cite{Wes},
to the $ \Z$-graded case. 
\begin{definition}{\label{cellularcat}}
  Let $ \Bbbk $ be a commutative ring with identity and let $ \cal C $ be a $ \Bbbk $-linear
  $\mathbb Z$-graded category, that is for objects $ m,n $ in $ \cal C $ we have a decomposition 
  \begin{equation}\label{sumgrad} {\rm Hom}_{\cal C}(m,n) = \oplus_{i \in \Z}{\rm Hom}_{\cal C}(m,n)_i  {\color{black}{.}}
\end{equation}    
Suppose that $ \cal C $ is endowed with a duality 
$ \ast$. 
Then $ \cal C $ is called a $\Z$-graded cellular category if there exists a poset $ \Lambda$ and for each 
$ \lambda \in \Lambda $ and each object $ n $ in $ \cal C $ a finite set $ \tab(n, \lambda)  $
which is decomposed as 
$ \tab(n, \lambda) = \cupdot_{ i \in \Z}\tab(n, \lambda)_i $
together with a map 
$ \tab(m, \lambda) \times  \tab(n, \lambda)
\rightarrow {\rm Hom}_{\cal C}(m,n), (S,T) \mapsto C_{ST}^{\lambda} $,
satisfying $ C_{ST}^{\lambda} \in {\rm Hom}_{\cal C}(m,n)_{i+j} $ if $ S \in \tab(n, \lambda)_i $
and $ T \in \tab(n, \lambda)_j $. These data satisfy that 
$(C_{ST}^{\lambda})^{\ast} = C_{TS}^{\lambda}$
and that 
\begin{equation}
  \{ C_{ST}^{\lambda} | S \in \tab(m, \lambda), T \in \tab(n, \lambda), \lambda \in \Lambda \} \mbox{ is a
    homogeneous $\Bbbk$-basis for } Hom_{\cal C}(m,n)  \\
\end{equation}
and for all $ a \in Hom_{\cal C}(n,p)_i, S \in \tab(m, \lambda)_j, T \in \tab(n, \lambda)_k $ 
\begin{equation}\label{multstruc}
a  C_{ST}^{\lambda} = \sum_{ S^{\prime} \in \tab(p, \lambda)_{i+j}} r_{a}(S^{\prime}, S) C_{ S^{\prime}, T}^{\lambda}  \mbox{ mod } A^{\lambda}_{i+j+k}
\end{equation}
where 
$ A^{\lambda}  $ is the span of $\{  C_{ST}^{\mu } | \mu < \lambda, S \in \tab(m, \mu), T \in \tab(p,\mu) \} $.
\end{definition}

\medskip
This following simple fact was already mentioned
in \cite{steen}, in the ungraded case. Let $ \cal C$ be a graded cellular category and 
let $ A $ be a finite subset of the objects of $ \cal C$. 
Define $ {\rm End}_{\cal C}(A) $ as the direct sum 
\begin{equation}
 {\rm End}_{\cal C}(A)  := \oplus_{ m,n \in A } {\rm Hom}_{\cal C}(m,n)  {\color{black}{.}}
\end{equation}
Then $ {\rm End}_{\cal C}(A) $ has a $\Bbbk$-algebra structure as follows
\begin{equation}
g \cdot f := \left\{\begin{array}{ll} g\circ f  & \mbox{ if } f \in {\rm Hom}_{\cal C}(m,n), g \in {\rm Hom}_{\cal C}(n,p) \mbox{ for some } m,n,p \\
0 & \mbox{ otherwise} 
\end{array}
\right. 
\end{equation}
and, in view of \eqref{sumgrad} and \eqref{multstruc}, this is a graded $ \Bbbk $-algebra structure.
Moreover, we have the following Theorem.
\begin{theorem}\label{52}
Let $ \cal C $ be a graded cellular category and let $ A $ be a finite subset of the objects for $ \cal C$. 
Define for $ \lambda \in \Lambda $ the set $ \tab(\lambda) := \cup_{ n \in A } \tab(n, \lambda) $.
Let for $ S \in \tab(\lambda), T \in \tab(\lambda) $ the element $ C_{ST}^{\lambda} \in    {\rm End}_{\cal C}(A)  $ be
defined as the inclusion of $ C_{ST}^{\lambda} \in {\rm Hom}_{\cal C}(m,n) $ in $ {\rm End}_{\cal C}(A)  $. 
Then these data define a graded cellular algebra structure on $ {\rm End}_{\cal C}(A) $. 
\end{theorem}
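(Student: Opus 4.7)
\medskip
The plan is to verify each axiom of Definition \ref{gradedcel} directly, translating the category-level data into algebra-level data. The decomposition
\begin{equation}
  {\rm End}_{\cal C}(A) = \bigoplus_{m,n \in A} \bigoplus_{i \in \Z} {\rm Hom}_{\cal C}(m,n)_i
\end{equation}
together with the fact that composition in $\cal C$ respects the grading of \eqref{sumgrad}, makes ${\rm End}_{\cal C}(A)$ a $\Z$-graded $\Bbbk$-algebra: associativity follows from associativity of composition, and every product either lies in a single Hom-space or vanishes by definition. The degree function on $\tab(\lambda) := \bigcup_{n \in A} \tab(n,\lambda)$ is inherited from the decomposition $\tab(n,\lambda) = \cupdot_i \tab(n,\lambda)_i$, and with this choice every $C^{\lambda}_{ST}$ is homogeneous of degree $\deg(S)+\deg(T)$ by the axioms of $\cal C$.

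\medskip
First I would check the basis property and the anti-involution. The basis property is immediate: the category axiom gives a homogeneous $\Bbbk$-basis $\{C^{\lambda}_{ST}\}$ of each ${\rm Hom}_{\cal C}(m,n)$, and taking the disjoint union over all $m,n \in A$ produces the required homogeneous basis of ${\rm End}_{\cal C}(A)$; in particular the map $(S,T) \mapsto C^{\lambda}_{ST}$ is injective. The duality $\ast$ on $\cal C$ restricts to $\Bbbk$-linear maps ${\rm Hom}_{\cal C}(m,n) \to {\rm Hom}_{\cal C}(n,m)$, which assemble into a $\Bbbk$-linear involution of ${\rm End}_{\cal C}(A)$; since $\ast$ reverses composition it is an algebra anti-involution, and the identity $(C^{\lambda}_{ST})^{\ast} = C^{\lambda}_{TS}$ is inherited directly from $\cal C$.

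\medskip
The step that requires the most care is the cellular multiplication rule of Definition \ref{defcellularalgebra}. By $\Bbbk$-linearity it suffices to treat an element $a \in {\rm Hom}_{\cal C}(p,q)_i$ acting on $C^{\lambda}_{ST}$ with $S \in \tab(m,\lambda)_j$, $T \in \tab(n,\lambda)_k$. If $p \neq m$ then $a \cdot C^{\lambda}_{ST} = 0$, which is a trivial instance of the required expansion. If $p = m$ then the algebra product is the honest composition in $\cal C$, and the category-level relation \eqref{multstruc} gives
\begin{equation}
a \cdot C^{\lambda}_{ST} = \sum_{S' \in \tab(q, \lambda)_{i+j}} r_{a}(S', S) \, C^{\lambda}_{S'T} + (\text{lower terms})
\end{equation}
where the lower terms lie in the span of $C^{\mu}_{S'T'}$ for $\mu < \lambda$ inside ${\rm Hom}_{\cal C}(q,n)$. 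The main obstacle, such as it is, will be the bookkeeping of identifying these category-level lower-terms subspaces with the algebra-level ideal $A^{<\lambda} \subseteq {\rm End}_{\cal C}(A)$ spanned by all $C^{\mu}_{S'T'}$ with $\mu < \lambda$. Stability of $A^{<\lambda}$ under left and right multiplication by arbitrary elements of ${\rm End}_{\cal C}(A)$ reduces, by linearity and the vanishing argument above, to stability of each category-level lower-terms space under left and right composition, which is itself a consequence of \eqref{multstruc} and its $\ast$-image. Once this identification is in place, the graded cellular axioms for ${\rm End}_{\cal C}(A)$ hold on the nose.
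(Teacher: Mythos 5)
Your proposal is correct and follows the same route as the paper: the paper's own proof simply notes that, as in the ungraded case of \cite{steen}, the claim follows immediately from the definitions, and your argument is exactly that definition-check written out (grading of the composition product, assembling the homogeneous bases, the anti-involution, and the cellular expansion rule with the lower-term ideal $A^{<\lambda}$). No gap; you have just made explicit what the paper leaves implicit.
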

\begin{dem}
Just as in the ungraded case considered in \cite{steen}, this follows immediately from the definitions.
\end{dem}

\medskip
For a general Coxeter system $(W,S)$, it was shown in \cite{EW} that the diagrammatic
Soergel categories $ \tildeSoergelcat $ and $ {\mathcal D}_{(W,S)}^{\comu} $, see
Definition \ref{defin endo BS} and
Remark \ref{cyclotomic}, 
are graded cellular categories in the sense of Definition 
\ref{cellularcat}. 

\medskip
Let us indicate the ingredients that make the categories
$ \tildeSoergelcat $ and $\tildeSoergelcatC  $, see Definition \ref{defin endo BS} and
Remark \ref{cyclotomic}, 
fit into Definition \ref{cellularcat}, 
for our choice
of $ (W,S) $.
In case of $ \tildeSoergelcat $, 
we use for $ \Bbbk $ the ring $ R $, and for the objects 
and morphisms we use the objects 
and morphisms given in 
Definition \ref{defin endo BS}.
For the poset $ \Lambda $ we use $ W$ itself, endowed with the
Bruhat order poset structure.
For $ \underline{w} \in \Exp  $ starting with $ {\color{red} s} $ and in reduced form, 
that is $  \underline{w} = w $, 
we use for $ \tab(\underline{w}, v)  $ the
set of birdcages $ \tab_w(v)  $.
For $ \underline{w} \in \Exp  $ starting with $ {\color{blue} t} $ and in reduced form, 
we use for $ \tab_w(v)  $ the corresponding set of birdcages
$ \tab_w(v)  $. 
For $ \underline{w} \in \Exp$ a general object
in $  \tildeSoergelcat $ not in reduced form, 
we 
use the general light leaves 
construction from \cite{EW}. Since we do not need the details of
this construction we skip it at this point. 
In case of $ \tildeSoergelcatC $ we use the same ingredients as for
$ \tildeSoergelcat $, except that for $ \Bbbk $ we replace
$ R $ by $ \comu$.

\medskip
Let us now fix a finite subset $ W_0 \subseteq W $ of $ W $ such that
$ v \in W_0, u \le v \implies u \in W_0$, that is $W_0 $ is an ideal in $W$. For
each $ z \in W_0 $ we let $ \underline{z} $ be its (unique) reduced expression. We then set 
$ \underline{W}_0 := \{ \underline{z} \, | \,  z \in W_0 \} \subseteq \Exp $ and define
\begin{equation} \tilde{A}^{gr}_{W_0} := \mbox{End}_{\tildeSoergelcat} (\underline{W}_0) \,\, \,\,  \mbox{and} \,\, \,\,
\tilde{A}_{W_0}^{gr, \comu} := \mbox{End}_{\tildeSoergelcatC} (\underline{W}_0)  {\color{black}{.}}
\end{equation}
Note that in this setup we recover the graded cellular algebras
$ \tilde{A}^{gr}_w$ and $ \tilde{A}^{gr, \comu}_w$ 
by taking $ W_0:= \{w\} $.

\medskip
We now have the following Theorem, which is
our main reason for changing to the categorical setting.
\begin{theorem}\label{54}
$ \tilde{A}_{W_0}^{gr, \comu}$ is a graded quasi-hereditary algebra over $ \comu $.
\end{theorem}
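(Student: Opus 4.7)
The plan is two-fold: first to equip $\tilde{A}_{W_0}^{gr,\comu}$ with its graded cellular structure by invoking Theorem \ref{52}, and then to verify the quasi-hereditary property by producing, for each $v \in W_0$, a homogeneous degree-zero element on which the cellular bilinear form takes a non-zero value. Recall that the standard characterisation of Graham-Lehrer and K\"onig-Xi says that a cellular algebra over a field is quasi-hereditary if and only if every cell module carries a non-zero bilinear form; the graded refinement amounts to the same statement at degree zero.

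The first step is almost formal: since $ \tildeSoergelcatC $ is a graded cellular category in the sense of Definition \ref{cellularcat}, as established by Elias and Williamson in \cite{EW} for the realization chosen here (and its dual), Theorem \ref{52} applied to the finite set of objects $\underline{W}_0$ immediately yields that $ \tilde{A}_{W_0}^{gr,\comu} $ is a graded cellular algebra over $\comu$, with poset $W_0$ and cell datum given by the double light leaves basis.

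For the second step, fix $v \in W_0$. Since $\underline{v} \in \underline{W}_0$ is reduced, the identity morphism $\id_{\underline{v}}$ is an element of $\tilde{A}_{W_0}^{gr,\comu}$ of degree zero. By the recursive construction of the light leaves basis (see \cite{EW,LibedinskyGentle}), there is a canonical trivial light leaf $L_v \in \tab_{\underline{v}}(v)$ -- the one that keeps every strand vertical -- such that the corresponding cellular basis element $C^v_{L_v L_v}$ equals $\id_{\underline{v}}$ modulo the lower cell ideal $A^{<v}$; both $L_v$ and $\id_{\underline{v}}$ have degree zero. Consequently, the image $\overline{C^v_{L_v L_v}}$ in $ A^v / A^{<v} $ is idempotent, and unwinding the definition of the cellular bilinear form (which reads off the coefficient of $ C^v_{L_v L_v} $ in the product $ (C^v_{L_v L_v})^2 $ modulo $ A^{<v} $) yields $ \langle L_v, L_v \rangle_v = 1 $. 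Thus the form on $ \Delta^{gr,\comu}_{W_0}(v) $ is non-zero on a pair of degree-zero elements for every $v \in W_0$, and the cell ideals $ A^{\le v} $ form a graded heredity chain generated by degree-zero idempotents. This establishes the graded quasi-hereditary property.

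The principal obstacle is the clean verification that the trivial light leaf $L_v$ indeed satisfies $C^v_{L_v L_v} \equiv \id_{\underline{v}} \pmod{A^{<v}}$ for every $v \in W_0$. This requires tracing through the recursive light-leaves construction of \cite{EW}, choosing the ``identity'' branch at each step of the recursion, and confronting the fact that for a non-reduced expression the light leaves construction would not produce the identity. Since however our $\underline{v}$ is reduced by choice, and since in our type $\tilde{A}_1$ setting the only reduced expressions are the alternating words, this combinatorial verification is genuinely a bookkeeping exercise rather than a substantive difficulty.
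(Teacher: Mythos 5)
Your proposal is correct and is essentially the paper's own route: the paper simply defers to the proof of Theorem 8.5 of \cite{steen}, and that argument is precisely what you spell out — graded cellularity of $\mbox{End}_{\tildeSoergelcatC}(\underline{W}_0)$ via Theorem \ref{52}, together with the observation that, $W_0$ being an ideal, each reduced word $\underline{v}$ occurs among the objects, so the trivial (all strands vertical) double leaf on $\underline{v}$ is a degree-zero cellular basis element pairing to $1$, whence every cell form is nonzero and (graded) quasi-heredity follows by the Graham--Lehrer/K\"onig--Xi criterion. No gap; if anything your key step is even cleaner than stated, since $C^v_{L_vL_v}$ equals $\id_{\underline{v}}$ on the nose, not merely modulo $A^{<v}$.
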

\begin{dem}
  The proof of Theorem 8.5 in \cite{steen}, corresponding to the ungraded setting, carries over to
the present graded setting. 
 \end{dem}

\medskip
Let $  \Delta^{gr}_{W_0}( v) $ and $  \Delta^{gr, \comu}_{W_0}( v) $
be the graded cell modules for 
$ \tilde{A}_{W_0}^{gr}$ and $ \tilde{A}_{W_0}^{gr, \comu}$. There is an $ R$-module decomposition
\begin{equation}\label{orto}
  \Delta^{gr}_{W_0}( v) = \oplus_{ \underline{z} \in \underline{W}_0}   \Delta^{gr}_{z}( v)
\end{equation}
where we use ${z}  $ instead of ${w}  $ 
to indicate that $\underline{z}  $ may begin with
$ {\color{red} s} $ as well as 
$ {\color{blue} t} $. There is a similar decomposition for $   \Delta^{gr, \comu}_{W_0}( v) $. Let 
  $ \langle \cdot, \cdot \rangle^{W_0}_{v} $ be the bilinear form on $  \Delta^{gr}_{W_0}( v) $.
  It is orthogonal with respect to the decomposition in \eqref{orto}. Mimicking what we did
  for $  \Delta^{gr}_{w}( v) $ we choose $ \alpha $ a root for $ W$ and define for
  $   \Delta^{gr, \alpha }_{W_0}(v) $ via 
\begin{equation}
  \Delta^{gr, \alpha }_{W_0}(v) := \{ a \in  \Delta^{gr}_{W_0}(v)  \, | \, \alpha \mbox{ divides }
  \langle a, b \rangle_{v}^{W_0}   \mbox{ for all } b \in   \Delta^{gr}_{W_0}(v) \}  
\end{equation}  
and set 
\begin{equation}
  \Delta^{gr, \alpha, \comu }_{W_0}(v):=   \pi(\Delta^{gr, \alpha }_{W_0}(v))
\end{equation}  
where $ \pi $ is before. 
Following \cite{steen}, we define for $ z \in W_0 $ 
projection maps $ \varphi_{{z}} $ 
as follows
\begin{equation}
  \varphi_{z}:
  \langle \tilde{A}_{W_0}^{gr, \comu} -{\rm mod} \rangle_{q} \rightarrow
  \langle \tilde{A}_{z}^{gr, \comu}-{\rm mod} \rangle_{q}, \, \, \, \,   \langle \Delta^{gr, \comu }_{W_0}(v)
  \rangle_q  \mapsto
          \langle \Delta^{gr, \comu }_{z}(v) \rangle_q 
\end{equation}
and arguing as in \cite{steen}, we get the following compatibility 
at Grothendieck group level
\begin{equation}\label{compatibi}
  \varphi_{z}( \langle  \Delta^{gr, \alpha, \comu }_{W_0}(v) \rangle_q )  =
  \langle \Delta^{gr, \alpha, \comu }_{z}(v) \rangle_q  {\color{black}{.}}
\end{equation}
We have 
natural homomorphisms of $ \Z[q,q^{-1}] $-modules
\begin{equation}
\begin{array}{c}
  \rankWZeroq: 
  \langle \tilde{A}_{W_0}^{gr, \comu}-{\rm mod} \rangle_{q} \rightarrow \Z[q, q^{-1}], \, \, \, 
  \langle M \rangle_q \mapsto \rankq M  \\ \\
  \rankzq : 
  \langle \tilde{A}_{z}^{gr, \comu}-{\rm mod} \rangle_{q} \rightarrow \Z[q, q^{-1}], \, \, \, 
  \langle M \rangle_q \mapsto \rankq M    {\color{black}{.}}
\end{array}
\end{equation}
Let $ \Phi:   \langle \tilde{A}_{W_0}^{gr, \comu}-{\rm mod} \rangle_{q} \rightarrow
\oplus_{ \underline{z} \in \underline{W}_0 } \Z[q, q^{-1}] $ be the $ \Z[q,q^{-1}] $-homomorphism 
whose $\underline{z}$'th coordinate is equal to the composite map $ \rankzq \circ \, \varphi_{z} $. 
With this notation we have the following Theorem. 
\begin{theorem}\label{secondlast} 
$ \Phi:   \langle \tilde{A}_{W_0}^{gr, \comu}-{\rm mod} \rangle_{q} \rightarrow
  \oplus_{ \underline{z} \in \underline{W}_0 } \Z[q, q^{-1}] $ is an isomorphism of
  $ \Z[q,q^{-1}] $-modules.
\end{theorem}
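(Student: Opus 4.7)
The plan is to show that, with respect to suitable bases for the source and target, the map $\Phi$ is represented by an upper triangular matrix with ones on the diagonal, hence is invertible. I would first invoke Theorem \ref{54} to conclude that $\tilde{A}_{W_0}^{gr, \comu}$ is graded quasi-hereditary. By standard theory of graded quasi-hereditary algebras, the enriched Grothendieck group $\langle \tilde{A}_{W_0}^{gr, \comu}{\rm -mod} \rangle_{q}$ is a free $\Z[q, q^{-1}]$-module of rank $|W_0|$, with basis the symbols of the graded simples $\{\langle L(v) \rangle_q : v \in W_0 \}$, and equivalently --- via the triangularity of the graded decomposition matrix --- with basis $\{\langle \Delta^{gr, \comu}_{W_0}(v) \rangle_q : v \in W_0 \}$ consisting of the graded cell module classes.

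I would then compute $\Phi$ on the cell module basis. From the orthogonal decomposition \eqref{orto} and the definition of the projection $\varphi_z$, we obtain the analogue of \eqref{compatibi} without the $\alpha$ decoration, namely $\varphi_z(\langle \Delta^{gr, \comu}_{W_0}(v) \rangle_q) = \langle \Delta^{gr, \comu}_z(v) \rangle_q$. Applying $\dim_{z, q}$ shows that the $\underline{z}$-coordinate of $\Phi(\langle \Delta^{gr, \comu}_{W_0}(v) \rangle_q)$ equals $\dim_q \Delta^{gr, \comu}_z(v)$. The key input for showing that the resulting $|W_0| \times |W_0|$ matrix is invertible comes from the light leaves description of Theorem \ref{thenwehavethe}: the cell module $\Delta^{gr, \comu}_z(v)$ has a basis indexed by light leaves of $\underline{z}$ that define $v$, and such light leaves exist only when $v \le z$ in the Bruhat order. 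Hence $\dim_q \Delta^{gr, \comu}_z(v) = 0$ whenever $v \not\le z$.

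To pin down the diagonal entries I would observe that, since every element of the infinite dihedral group $W$ has a unique reduced expression, the only light leaf of $\underline{z}$ that defines $z$ itself is the diagram in which every generator of $\underline{z}$ contributes a hanging birdcage in zone B of \eqref{birdcagecagezonesA}, with empty zones A and C. This diagram is the identity endomorphism of $\underline{z}$, which has degree zero, so $\dim_q \Delta^{gr, \comu}_z(z) = 1$. Choosing any total order on $W_0$ refining the Bruhat order, the matrix of $\Phi$ is then upper triangular with $1$'s on the diagonal, and hence invertible in $M_{|W_0|}(\Z[q, q^{-1}])$; this yields the claimed isomorphism.

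The main obstacle will be the first step: justifying the basis statement for the enriched Grothendieck group. While the triangularity of the graded decomposition matrix is standard for graded quasi-hereditary algebras, in the present setup it requires a short argument, either through a graded BGG-type reciprocity for $\tilde{A}_{W_0}^{gr, \comu}$, or directly from the $\Z$-graded cellular structure, showing that every object of $\tilde{A}_{W_0}^{gr, \comu}{\rm -mod}$ admits a filtration whose subquotients are degree shifts of cell modules, with multiplicities governed by the graded decomposition numbers.
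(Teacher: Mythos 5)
Your proposal is correct, and its core is the same as the paper's: everything hinges on the unitriangularity statement that the $\underline{z}$-component of $\Phi(\langle \Delta^{gr,\comu}_{W_0}(v)\rangle_q)$ is $\dim_q \Delta^{gr,\comu}_{z}(v)$, which vanishes unless $v\le z$ and equals $1$ when $v=z$ (in the paper this is asserted; your justification via the unique reduced expression and the unique all-hanging light leaf of degree zero is the right one). Where you diverge is the endgame. The paper only records that both sides are free $\Z[q,q^{-1}]$-modules of the same rank, proves surjectivity by the inductive "kill the minimal nonzero component" argument, and then gets injectivity for free from Vasconcelos' theorem; in particular it never needs to know that the classes $\langle \Delta^{gr,\comu}_{W_0}(v)\rangle_q$ form a basis of the enriched Grothendieck group, only its rank. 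You instead prove invertibility directly by exhibiting the matrix of $\Phi$ as unitriangular with respect to a linear refinement of the Bruhat order, which is cleaner and gives injectivity and surjectivity at once, but it does require the extra (standard, and correctly flagged by you) input that the cell module classes are a $\Z[q,q^{-1}]$-basis of $\langle \tilde{A}_{W_0}^{gr,\comu}\text{-mod}\rangle_q$; this follows from Theorem \ref{54} together with the unitriangularity of the graded decomposition matrix, i.e.\ exactly the filtration/reciprocity argument you sketch in your last paragraph. So both routes rest on the same quasi-heredity input, traded off differently: yours against a basis statement, the paper's against Vasconcelos plus an induction.
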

\begin{dem}
  Since $ \langle \tilde{A}_{W_0}^{gr, \comu}-{\rm mod} \rangle_{q} $ and
  $ \oplus_{ \underline{z} \in \underline{W}_0 } \Z[q, q^{-1}] $ are free $ \Z[q, q^{-1}] $-modules
  of the same {\color{black}{rank}},
  it is enough to show that $ \Phi $ is surjective, by Vasconcelos' Theorem
  once again, see \cite{Vasconcelos}. Let $ f =  \sum_{ \underline{z} \in \underline{W}_0 } f_{ \underline{z}} \in
  \oplus_{ \underline{z} \in W_0 } \Z[q, q^{-1}]  $ and choose $ f_{ \underline{z}_0} $ nonzero and 
  $ \underline{z}_0 $ minimal with respect to this condition. The $ \underline{z}_0 $'th component of
  $ \Phi( \langle \Delta^{gr, \comu }_{W_0}(z_0) \rangle_q) $
  is $\rankzeroq( \Delta_{ z_0}( z_0)) =1 $ and so the 
  $ {z}_0 $'th component of
  $ \Phi( f_{ \underline{z}_0} \langle \Delta^{gr, \comu }_{W_0}(z_0) \rangle_q) $ is $ f_{ \underline{z}_0} $. 
  On the other hand, the $z$'th component of $ \Phi( \Delta^{gr, \comu }_{W_0}(z_0)) $ is
$\rankzq \langle \Delta_{ z}( z_0) \rangle_q $ which is 
nonzero only if $ z_0  \le z $. Hence, we can use induction on
$  f -  \Phi( f_{ \underline{z}_0} \langle \Delta^{gr, \comu }_{W_0}(z_0) \rangle_q) $ 
and get that $ f  \in {\rm im}  \Phi $, as claimed. 
\end{dem}

\medskip

We can now prove the promised Grothendieck group extension of 
Corollary \ref{maincor}.

\begin{corollary}\label{mimickingA}
  For $ v \in W_0 $
  we have $ \Delta^{gr, \alpha ,\comu }_{W_0}(v) = 0 $
  unless $ w \ge s_{\alpha} v > v $. If
$ w \ge s_{\alpha} v > v $ then 
\begin{equation}\label{bothsides}
  \langle \Delta^{gr, \alpha, \comu}_{W_0}(v) \rangle_q   =
  \langle \Delta^{gr,  \comu}_{W_0}(s_{\alpha}v )[l(s_{\alpha}v) -l(v)]
  \rangle_q    {\color{black}{.}}
\end{equation}  
\end{corollary}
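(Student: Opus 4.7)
The plan is to exploit the injectivity of the Grothendieck--group map $\Phi$ from Theorem \ref{secondlast} and reduce the identity \eqref{bothsides} to the $z$--wise statement of Corollary \ref{maincor}. Since $\Phi$ is an isomorphism of $\Z[q,q^{-1}]$--modules, any identity in $\langle \tilde{A}_{W_0}^{gr,\comu}\text{-mod}\rangle_q$ may be checked componentwise under $\dim_{z,q} \circ \varphi_z$ for each $z \in W_0$.

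First I would compute the two sides of \eqref{bothsides} componentwise. By the compatibility \eqref{compatibi}, the $\underline z$--component of the left--hand side equals $\dim_q \Delta^{gr,\alpha,\comu}_z(v)$. For the right--hand side, the same compatibility (specialized to the full cell module, which is the $\alpha=1$ case) together with the fact that grading shift commutes with $\varphi_z$ and $\dim_{z,q}$ yields the component $q^{l(s_{\alpha}v) - l(v)} \dim_q \Delta^{gr,\comu}_z(s_\alpha v)$. These two expressions are precisely the two sides of Corollary \ref{maincor} with $z$ in the role of $w$: whenever $v < s_\alpha v \le z$ the equality holds directly, and if instead $s_\alpha v \not\le z$ then $\Delta^{gr,\comu}_z(s_\alpha v) = 0$ (since $s_\alpha v \notin \tilde{\Lambda}_z$) while Corollary \ref{maincor} simultaneously gives $\Delta^{gr,\alpha,\comu}_z(v) = 0$, so both components vanish. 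This proves the second assertion of the corollary.

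For the vanishing assertion, assume the condition $w \ge s_\alpha v > v$ fails. Since $W_0$ is an ideal dominated by $w$, this forces, for every $z \in W_0$, either $s_\alpha v \not\le z$ or $s_\alpha v \not> v$, and in either case Corollary \ref{maincor} applied to $z$ yields $\Delta^{gr,\alpha,\comu}_z(v) = 0$. Hence every $\underline z$--component of $\Phi(\langle \Delta^{gr,\alpha,\comu}_{W_0}(v) \rangle_q)$ vanishes, so by the injectivity of $\Phi$ the class $\langle \Delta^{gr,\alpha,\comu}_{W_0}(v) \rangle_q$ itself is zero in the enriched Grothendieck group. Since the graded dimension faithfully detects the zero module among finite--dimensional graded $\comu$--vector spaces, we conclude $\Delta^{gr,\alpha,\comu}_{W_0}(v) = 0$ as a module.

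The main subtlety, to the extent that there is one, lies in the passage from Grothendieck--class vanishing to module--level vanishing; this is resolved by observing that the enriched Grothendieck group records the full graded dimension, so the implication is automatic. All the real content has already been absorbed into the diagonalization Theorems \ref{cifactorA} and \ref{cifactorB}, the $z$--wise Corollary \ref{maincor}, the compatibility \eqref{compatibi}, and Theorem \ref{secondlast}; the present corollary is their bookkeeping consequence at the level of $\langle \cdot \rangle_q$.
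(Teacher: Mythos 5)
Your proof is correct and takes essentially the same route as the paper: apply $\Phi$, identify the $\underline{z}$-components of both sides of \eqref{bothsides} via the compatibility \eqref{compatibi} and the $z$-wise Corollary \ref{maincor}, and conclude from the injectivity of $\Phi$ given by Theorem \ref{secondlast}. Your additional care with the vanishing assertion (passing from vanishing of the class to vanishing of the module via $\dim_q$) is a detail the paper leaves implicit but is fully consistent with its argument.
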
  
\begin{dem}
We apply $ \Phi $ to \eqref{bothsides} and check that both sides are equal. 
Using \eqref{compatibi} and Corollary \ref{maincor} we get that the $\underline{z}$'th component
of the left hand side is $ \rankq \Delta^{gr,  \comu }_{z}(s_{\alpha}v) [l(s_{\alpha} v)-l(v)] $ 
which by definition of $ \Phi $ coincides with the right hand side. We then use
Theorem \ref{secondlast} 
to conclude the proof. 
\end{dem}  

\medskip

Finally, the Grothendieck group extension of Corollary \ref{maincorB2}
is proved with the same techniques, upon changing the ground ring for the category
$ \tildeSoergelcat $ from $ R $ to 
$ R_1$. The cell modules for $ \tildeSoergelcat $ are called $ \Delta^{gr, \mathbf x}_{W_0}(v) $ and we define
for $ k = 1,2,\ldots $ 
\begin{equation}
  \Delta^{gr, k, \mathbf x}_{W_0}(v) := \{ a \in  \Delta^{gr, \mathbf x}_{W_0}(v)  \, | \,
  \langle a, b \rangle_{v}^{W_0}  \in {\mathbf x^k} R_1   \mbox{ for all } b \in   \Delta^{gr, \mathbf x}_{W_0}(v) \}  
\end{equation}  
and set 
\begin{equation}
  \Delta^{gr, k , \comu }_{W_0}(v):=   \pi(   \Delta^{gr, k, \mathbf x}_{W_0}(v)   )  {\color{black}{.}}
\end{equation}  
Mimicking the proof of Corollary \ref{mimickingA} we then have the following generalization of
Corollary \ref{maincorB2}.

\begin{corollary}\phantomsection\label{mimicking2}
The following graded analogue of Jantzen's sum formula holds:
\begin{equation}  \sum_{k>0} \langle \Delta_{W_0}^{gr, k, \comu  }(v) \rangle_q =
  \sum_{\substack{ \alpha > 0 \\  v <s_{\alpha} v \le w}}
  \langle  \Delta^{gr, \comu}_{W_0}(s_{\alpha}v)[l(s_{\alpha}v) -l(v)] \rangle_q   {\color{black}{.}}
\end{equation}  
 \end{corollary}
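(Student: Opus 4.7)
The plan is to mimic the proof of Corollary \ref{mimickingA}, with the divisibility condition by a single root $\alpha$ replaced throughout by divisibility by $\mathbf{x}^k$. Since Theorem \ref{secondlast} says that $\Phi$ is an isomorphism of $\Z[q,q^{-1}]$-modules, it is enough to show that the two sides of the claimed identity have the same image under $\Phi$, i.e.\ agree componentwise on each $\underline{z} \in \underline{W}_0$.

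The first step is the analogue of the compatibility \eqref{compatibi}, namely
\begin{equation}
\varphi_{z}\bigl(\langle \Delta^{gr, k, \comu}_{W_0}(v)\rangle_q\bigr) \; = \; \langle \Delta^{gr, k, \comu}_{z}(v)\rangle_q,
\end{equation}
where the right hand side is the obvious analogue of \eqref{filtraSoergelxA} with $W_0$ replaced by $z$. For this I would use that the bilinear form $\langle \cdot,\cdot\rangle^{W_0}_v$ on $\Delta^{gr,\mathbf{x}}_{W_0}(v)$ is orthogonal with respect to the decomposition \eqref{orto}, so that an element $a = \sum_{\underline{z}} a_{\underline{z}} \in \Delta^{gr,\mathbf{x}}_{W_0}(v)$ satisfies $\mathbf{x}^k \mid \langle a,b\rangle^{W_0}_v$ for all $b$ if and only if $\mathbf{x}^k \mid \langle a_{\underline{z}}, b\rangle^{z}_v$ for all $b \in \Delta^{gr,\mathbf{x}}_z(v)$ and all $\underline{z}$. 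That the form remains block-diagonal after the base change $R \to R_1$ is immediate by tensoring.

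Next, for a fixed $\underline{z}$, the analogue of Corollary \ref{maincorB2} applied to $\tilde{A}^{gr,\comu}_z$ in place of $\tilde{A}^{gr,\comu}_w$ gives
\begin{equation}
\sum_{k>0} \dim_q \Delta_{z}^{gr, k, \comu}(v) \; = \; \sum_{\substack{\alpha > 0 \\ v < s_\alpha v \le z}} \dim_q \Delta^{gr, \comu}_z(s_\alpha v)\bigl[l(s_\alpha v) - l(v)\bigr].
\end{equation}
The target identity has its right hand side summed over $v < s_\alpha v \le w$ rather than $\le z$, but for any $s_\alpha v$ with $s_\alpha v \not\le z$ the cell module $\Delta^{gr,\comu}_z(s_\alpha v)$ vanishes, so these extra terms contribute zero on the $\underline{z}$-component. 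Combining with the compatibility of step one, the $\underline{z}$-components of both sides of the desired equality coincide, and invoking Theorem \ref{secondlast} concludes the proof.

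The only nontrivial point I expect is verifying step one, i.e.\ that the filtration by divisibility by $\mathbf{x}^k$ behaves well under the projections $\varphi_z$; this is a straightforward consequence of the orthogonal decomposition \eqref{orto} but needs to be spelled out carefully, especially after tensoring with $R_1$. Beyond that the argument is a direct transcription of the proof of Corollary \ref{mimickingA}, with Corollary \ref{maincorB2} playing the role that Corollary \ref{maincor} played there.
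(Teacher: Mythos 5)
Your proposal is correct and follows essentially the same route as the paper, whose proof of Corollary \ref{mimicking2} is simply the instruction to mimic the proof of Corollary \ref{mimickingA} after changing the ground ring from $R$ to $R_1$: apply $\Phi$, use the $\mathbf{x}^k$-analogue of the compatibility \eqref{compatibi} together with the single-$z$ sum formula of Corollary \ref{maincorB2}, and conclude by Theorem \ref{secondlast}. Your explicit justification of the compatibility step via the orthogonality of the form with respect to \eqref{orto}, and your remark that terms with $s_{\alpha}v \not\le z$ contribute zero on the $\underline{z}$-component, spell out details the paper leaves implicit but introduce nothing different in substance.
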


\bibliographystyle{myalpha}
 
\bibliography{mybibfile}

\begin{thebibliography}{X}



  
\bibitem{A} H. H. Andersen, {\it A sum formula for tilting filltrations}, 
Journal of Pure and Applied Algebra {\bf 152} (2000), 17-40.

\bibitem{A1} H. H. Andersen, {\it Filtrations of cohomology modules for Chevalley groups}, 
Annales scientifiques de l'{\'E}.N.S. $4^e$ s{\'e}rie, tome {\bf 16}, $\rm{n}^o$ 4 (1983), 495-528.



  
\bibitem{AJS}
 H. H. Andersen, J. C. Jantzen, W. Soergel, 
 \textit{Representations of quantum groups at a $p$-th root of unity and
   of semisimple groups in characteristic $p$: independence of $p$}, 
Astérisque {\bf 220} (1994) Paris: Société Mathématique de France, p. 321.  

{\color{black}{
\bibitem{BGS}
A. Beilinson, V. Ginzburg, W. Soergel, 
\textit{Koszul duality patterns in representation theory}, 
J. Am. Math. Soc. {\bf 9}(2) (1996), 473-527.}}
  
  

\bibitem{BCH} C. Bowman, A. Cox, A. Hazi, \textit{Path isomorphisms between quiver Hecke and diagrammatic Bott-Samelson endomorphism algebras}, arXiv:2005.02825, to appear in Advances in Mathematics.  


\bibitem{BCHM} C. Bowman, A. Cox, A. Hazi, D. Michailidis, \textit{
Path combinatorics and light leaves for quiver Hecke algebras.}
Math. Z. {\bf 300}(3) (2022), 2167-2203.




\bibitem{brundan-klesc} J. Brundan, A. Kleshchev,
\textit{Blocks of cyclotomic Hecke algebras and Khovanov-Lauda algebras}, Invent. Math. {\bf 178} (2009), 451-484.


\bibitem{BLS} G. Burrull, N. Libedinsky, P. Sentinelli, 
\textit{$p$-Jones-Wenzl idempotents}, 
Advances in Mathematics {\bf 352} (2019), 246–264.




\bibitem{blob positive} A. Cox, J. Graham, P. Martin, {\it The blob algebra in positive characteristic},
Journal of Algebra, {\bf 266}(2) (2003), 584-635.



\bibitem{EL} B. Elias, N. Libedinsky, {\it  Indecomposable Soergel bimodules for universal Coxeter groups.
  With an appendix by Ben Webster}, Trans. Am. Math. Soc.
{\bf 369}(6) (2017), 3883-3910. 



  
\bibitem{EW} B. Elias, G. Williamson, {\it Soergel calculus},  Representation Theory
{\bf 20} (2016), 295-374.


\bibitem{EhrigStroppel}
M. Ehrig, C. Stroppel, \textit{
Koszul gradings on Brauer algebras}, 
Int. Math. Res. Not. {\bf 2016} (13) (2016), 3970-4011. 



\bibitem{FL}
P. Fiebig, M. Lanini, \textit{
  The combinatorial category of Andersen, Jantzen and Soergel and filtered moment graph sheaves},
Abh. Math. Semin. Univ. Hamb. {\bf 86}(2) (2016), 203-212.



{\color{black}{
\bibitem{GJSV}
A. M. Gainutdinov, J. L. Jacobsen, H. Saleur, R. Vasseur, \textit{A physical approach to the
classification of indecomposable Virasoro representations from the blob algebra}, Nuclear Physics
B, {\bf 873}(3) 2013, 614–681. 
}}




\bibitem{GL} J. J. Graham, G. I. Lehrer,
  \textit{Cellular algebras}, Inventiones Mathematicae {\bf 123} (1996), 1-34.

\bibitem{GL2} J. J. Graham, G. I. Lehrer,
  \textit{The representation theory of affine Temperley-Lieb algebras},
  Enseign. Math., II. S\'er. {\bf 44}(3-4) (1998), 173-218.


\bibitem{HMP} A. Hazi, P. Martin, A. Parker, \textit{Indecomposable tilting modules for the blob algebra},
Journal of Algebra {\bf 568} (2021), 273-313. 

\bibitem{HRW} M. Hogancamp, D. E. V. Rose, P. Wedrich,
  \textit{A Kirby color for Khovanov homology}, arXiv:2210.05640,
{\color{black}{to appear in Journal of the European Mathematical Society. }}


{\color{black}{
\bibitem{Hu} J. Hu, {\it BGG Category $\mathcal O$ and $\mathbb Z$-Graded Representation Theory},
in: East China Normal University Scientific Reports {\bf 16}, 
Forty Years of Algebraic Groups, Algebraic Geometry, and Representation Theory in China. }}


\bibitem{hu-mathas} J. Hu, A. Mathas, \textit{Graded cellular bases for the cyclotomic Khovanov-Lauda-Rouquier
algebras of type $A$}, Adv. Math., {\bf 225} (2010), 598-642.


  
\bibitem{H1} J. E. Humphreys, {\it Reflection groups and Coxeter groups}, volume {\bf 29} of Cambridge Studies in
Advanced Mathematics. Cambridge University Press, Cambridge, 1990.


\bibitem{H} J. Humphreys, {\it Representations of semisimple Lie algebras in the BGG category O}, (2008), 
Graduate Studies in Mathematics 94, Providence, R.I.: American Mathematical Society, ISBN 978-0-8218-4678-0. 




  


\bibitem{Jo} V. F. R. Jones, {\it Index for subfactors}, Invent. Math. {\bf 72}(1) (1983) 1–25.


\bibitem{KhovanovLauda} M. Khovanov, A. Lauda, \textit{A diagrammatic approach to categorification of quantum groups I}, Represent. Theory {\bf 13}
(2009), 309-347.

  
\bibitem{LibedinskyGentle} N. Libedinsky, {\it 
  Gentle introduction to Soergel bimodules I: The basics}, {\color{black}{Sao Paulo Journal of
  Mathematical Sciences, {\bf 13}(2) (2019), 499-538.}}


  


\bibitem{LPR} D. Lobos, D. Plaza, S. Ryom-Hansen,
\textit{The nil-blob algebra: an incarnation of type $\tilde{A}_1$
Soergel calculus and of the truncated blob algebra}, 
Journal Algebra {\bf 570} (2021),  297-365. 




\bibitem{Lobos-Ryom-Hansen} D. Lobos, S. Ryom-Hansen,
  \textit{Graded cellular basis and Jucys-Murphy elements
  for generalized blob algebras},
{\color{black}{Journal of Pure and Applied Algebra}, 
{\bf 224}(7) (2020), 106277, 1-40.}




\bibitem{Mat-Sal} P. P. Martin, H. Saleur, \textit{The blob algebra and the periodic Temperley-Lieb algebra},
Lett. Math. Phys. {\bf 30} (1994), 189-206.


{\color{black}{
\bibitem{Mar-Wood}
P. P. Martin, D. Woodcock, \textit{On the structure of the blob algebra}, 
J. Algebra {\bf 225}(2) (2000), 957-988. 
}}


\bibitem{MathasJan} A. Mathas, 
\textit{Positive Jantzen sum formulas for cyclotomic Hecke algebras}, 
Math. Z. {\bf 301}(3) (2022), 2617-2658. 

\bibitem{Michailidis}
D. Michailidis, 
\textit{Bases and BGG resolutions of simple modules of Temperley-Lieb algebras of type B}, 
Int. Math. Res. Not. {\bf 2022}(12) (2022), 9357-9412. 


\bibitem{Plaza1} D. Plaza, \textit{Graded decomposition numbers for the blob algebra}, 
J. Algebra {\bf 394}, (2013), 182-206.


\bibitem{Plaza} D. Plaza, \textit{Graded cellularity and the monotonicity conjecture}, 
Journal Algebra {\bf 473} (2017), 324-351.



\bibitem{PlazaRyom}
D. Plaza,  S. Ryom-Hansen, \textit{Graded cellular bases for Temperley-Lieb algebras of type A and B},
Journal of Algebraic Combinatorics, {\bf 40}(1) (2014), 137-177.


\bibitem{Russel} H. Russell, \textit{The Bar-Natan skein module of the solid torus
  and the homology of $(n, n)$ Springer varieties}, Geom.
Dedicata {\bf 142} (2009), 71-89. 

  

\bibitem{steen} S. Ryom-Hansen, \textit{Jucys-Murphy elements for Soergel bimodules},
  Journal of Algebra, {\bf 551} (2020), 154-190.

{\color{black}{
\bibitem{Shan} P. Shan, 
\textit{Graded decomposition matrices of $v$-Schur algebras via Jantzen filtration}, 
Represent. Theory {\bf 16} (2012), 212-269.}}



  
\bibitem{Stroppel} C. Stroppel,
\textit{Category $ \mathcal O$: gradings and translation functors}, J. Algebra {\bf 268} (1) (2003), 301-326.

  
\bibitem{Vasconcelos} W. V. Vasconcelos, \textit{On finitely generated flat modules}, Trans. Amer. Math. Soc.,
  {\bf 138} (1969),  505-512.

\bibitem{Wenzl} H. Wenzl, {\it On sequences of projections}, C. R. Math. Rep. Acad. Sci. Can. 9 {\bf 1}
  (1987), 5–9.


\bibitem{Wes} B. W. Westbury,  {\it Invariant tensors and cellular categories},
  Journal of Algebra, {\bf 321}(11) 2009, 3563-3567.


\end{thebibliography}

\noindent
mhernandez@inst-mat.utalca.cl, Universidad de Talca, Chile.\newline
steen@inst-mat.utalca.cl, Universidad de Talca, Chile.

%\section*{References}
%\bibliographystyle{myalpha} 
%\bibliography{mybibfile}

\end{document}